\numberwithin{equation}{section}
\newcommand{\N}{\mathbb{N}}
\newcommand{\R}{\mathbb{R}}
\newtheorem{corollary}{Corollary}[section]
\newtheorem{definition}[corollary]{Definition}
\newtheorem{lemma}[corollary]{Lemma}
\newtheorem{proposition}[corollary]{Proposition}
\newtheorem{remark}[corollary]{Remark}
\newtheorem{theorem}[corollary]{Theorem}
\newfont{\sBlackboard}{msbm10 scaled 900}
\newcommand{\mylabel}[1]{\label{#1}
            \ifx\undefined\stillediting
            \else \fbox{$#1$}\fi }
\newcommand{\BE}{\begin{equation}}
\newcommand{\BEQ}[1]{\BE\mylabel{#1}}
\newcommand{\EEQ}{\end{equation}}
\newcommand{\rfb}[1]{\mbox{\rm
   (\ref{#1})}\ifx\undefined\stillediting\else:\fbox{$#1$}\fi}
\newcommand{\half}   {{\frac{1}{2}}}
\newfont{\Blackboard}{msbm10 scaled 1200}
\newcommand{\bl}[1]{\mbox{\Blackboard #1}}
\newfont{\roma}{cmr10 scaled 1200}
\def\CC{\rm \hbox{C\kern-.56em\raise.4ex
         \hbox{$\scriptscriptstyle |$}\kern+0.5 em }}
\newcommand{\be}{\begin{equation}}
\newcommand{\ee}{\end{equation}}
\newcommand{\beq}{\begin{eqnarray}}
\newcommand{\eeq}{\end{eqnarray}}
\newcommand{\beqs}{\begin{eqnarray*}}
\newcommand{\eeqs}{\end{eqnarray*}}
\newcommand{\bt}{\begin{Theorem}}
\newcommand{\et}{\end{Theorem}}
\newcommand{\br}{\begin{remark}}
\newcommand{\er}{\end{remark}}
\newcommand{\bc}{\begin{Corollary}}
\newcommand{\ec}{\end{Corollary}}
\newcommand{\el}{\end{Lemma}}
\newcommand{\bd}{\begin{definition}}
\newcommand{\ed}{\end{definition}}
\newcommand{\nline}  {{\bl N}}
\newcommand{\rline}  {{\bl R}}
\newcommand{\mm}    {{\hbox{\hskip 0.5pt}}}
\newcommand{\m}     {{\hbox{\hskip 1pt}}}
\newcommand{\nm}    {{\hbox{\hskip -3pt}}}
\newcommand{\bluff} {{\hbox{\raise 15pt \hbox{\mm}}}}
\newcommand{\FORALL} {{\hbox{$\hskip 11mm \forall \;$}}}
\newcommand{\rarrow} {{\,\rightarrow\,}}
\def\be{\begin{equation}}
\def\ee{\end{equation}}
\def\ds{\displaystyle}
\def\l@subsection{\@tocline{2}{0pt}{4pc}{6pc}{}}
\def\l@subsubsection{\@tocline{3}{0pt}{8pc}{8pc}{}}
\begin{document}

\thispagestyle{empty}
\title{Numerical stabilization method by switching time-delay}
\author{Ka\"{\i}s Ammari}
\address{LR Analysis and Control of PDEs, LR 22ES03, Department of Mathematics,
Faculty of Sciences of Monastir, University of Monastir, 5019 Monastir, Tunisia}
\email{kais.ammari@fsm.rnu.tn}

\author{St\'ephane Gerbi}
\address{Laboratoire de Math\'ematiques UMR 5127 CNRS \& Universit\'e Savoie Mont Blanc, Campus scientifique, 73376 Le Bourget du Lac Cedex, France}
\email{stephane.gerbi@univ-smb.fr}
\date{}
\begin{abstract}
In this paper, we propose a new numerical strategy for the stabilization of evolution systems. The method is based on the methodology given by Ammari-Nicaise-Pignotti in \cite{ANP1}. This method is then implemented in 1D by suitable numerical approximation techniques. Numerical experiments complete this study to confirm the theoretical announced results.
\end{abstract}

\subjclass[2010]{35B35, 35B40, 93D15, 65M06, 65M08}
\keywords{numerical stabilization with time-delay, switching control, numerical analysis and numerical study, finite difference method, finite volume method}

\maketitle
\tableofcontents

\section{Introduction}
\setcounter{equation}{0}

Delay effects arise in many applications and practical problems and
it is well-known that an arbitrarily small delay may destabilize a
system which is uniformly asymptotically stable in absence of delay
(see e.g. \cite{Datko,DLP,Datko97},  \cite{NPSicon})). Nevertheless recent papers reveal that particular choice of delays may restitute exponential stability property, see \cite{g,gt,guo}.

We refer also to \cite{amman,ANP,AN, ANP1,ANP2,AG,NPSicon,NVCOCV10} for stability results for systems with time delay
due to the presence of ``good'' feedbacks compensating the destabilizing delay effect.  

\medskip

In this paper we propose a numerical approach that consists in stabilizing the abstract-wave system
by a control law that uses information from the past (by  switching or not). This means that the  stabilization is obtained  by a control method, see \cite{ANP1} for more details,
and not by a feedback law. This strategy can  provide a guide to the time--delay compensation scheme. For switching control (without delay) we refer to Zuazua \cite{zuazuasc}. 

\medskip

Let $H$ be a Hilbert space equipped with the norm $||.||_{H}$, and
let $A:\mathcal{D}(A) \subset H \rightarrow H$ be a self-adjoint, positive and
invertible operator. We introduce the scale of Hilbert spaces
$H_{\alpha}$, $\alpha\in\rline$, as follows\m: for every $\alpha\geq
0$, $H_{\alpha}=\mathcal{D}(A^{\alpha})$ with the norm $\|z
\|_{\alpha}=\|A^\alpha z\|_{H}$. The space $H_{-\alpha}$ is defined
by duality with respect to the pivot space $H$ as  $H_{-\alpha}
=H_{\alpha}^*$ for $\alpha>0$. The operator $A$ can be extended (or
restricted) to each $H_{\alpha}$ such that it becomes a bounded
operator \BEQ{A0ext} A: H_{\alpha} \rarrow H_{\alpha - 1} \,
\,\mbox{for}\,\, \alpha\in\rline \m. \EEQ

The second ingredient needed for our construction is a bounded
linear operator $B~:~ U \longrightarrow ~H_{-\half},$ where $U$ is
another Hilbert space identified with its dual. The operator $B^*$
is bounded from $H_{\frac12}$ to $U$.

The system that we considered in this paper is given by the following abstract problem:
\BEQ{damped1}  
\ddot{u}(t)
+ Au(t)  
= 0, \quad 0 \leq t\leq  T_0, \EEQ 
\BEQ{int}
\ddot{u}(t)
+ Au(t)  + \mu \, BB^* \dot{u} (t- T_0) = 0, \quad t\geq  T_0,
\EEQ
\BEQ{output} u(0) \m=\m u_0,  \m 
\dot{u}(0)=u_1, \EEQ 
where
$T_0 > 0$ is the time delay, $\mu$ is a real number and the initial datum $(w_0,w_1)$ belongs to a suitable 
space.

\medskip

For this system we need to assume the closed-loop admissibility of the operator $B^*$ (see for more details \cite{AN} and \cite{tucsnakweiss}), i.e. that for all $T >0$ there exists $C(T) > 0$ such that 
{\color{black}
\be
\label{inegobsv}
\left(
\int_{0}^{T}  \left\|B^{*} \dot{\phi}(s)\right\|_{U}^2 d\,s \right)^{\frac 1 2}\leq C(T)\, \left(\|(w_0,w_1)\|_{H_\half \times H} + \|u\|_{L^2(0,T;U)} \right),
\ee
for all $(w_0,w_1) \in H_1 \times H_\half$, $u\in L^2(0,T;U)$ and $\phi \in C([0,T], H_1)\cap C^1([0,T], H_\half)$ is the solution of
the inhomogeneous evolution equation
\be \ddot{\phi}(t) + A \phi(t) = Bu(t), 
\quad t\in (0,T), \label{eq3b} \ee \be \phi(0) = w_0,
 \dot{\phi} (0) = w_1.\nonumber
  \label{eq4b}\ee
}
 
To study the well--posedness of the system
\eqref{damped1}--\eqref{output}, we write it as an abstract
Cauchy problem in a product Banach space, and use the semigroup
approach. For this take the Hilbert space ${\mathcal H}~:=~
H_{\half} \times H$ and the unbounded
linear operators
 \be
  \label{opbb} {\mathcal  A}: {\mathcal
D}({\mathcal  A}) = H_1 \times H_\half \subset {\mathcal  H} \longrightarrow {\mathcal  H}, \, {\mathcal
A}\left(
\begin{array}{ccc}u_1\\u_2\end{array}\right) = \left(
\begin{array}{cc}
 u_2 \\
 - Au_1  
\end{array}
\right) \ee

and

$$ {\mathcal  A}_d: {\mathcal
D}({\mathcal  A}_d) = \left\{(u,v) \in {\mathcal H}; \, v \in H_\half, \, Au + \mu BB^*v \in H \right\} \subset {\mathcal  H} \longrightarrow {\mathcal  H}, \, 
$$
\be
  \label{opbbb}
{\mathcal A}_d \left(
\begin{array}{ccc}u_1\\u_2\end{array}\right) = \left(
\begin{array}{cc}
u_2 \\
- Au_1  - \mu \, BB^*u_2
\end{array}
\right). \ee

The operators $({\mathcal  A}, {\mathcal  D}({\mathcal  A}) )$ and $({\mathcal  A}_d, {\mathcal  D}({\mathcal  A}_d) )$  defined by \eqref{opbbb} 
generate group of isometries of ${\mathcal H}$ and strongly continuous semigroup of contractions on ${\mathcal H}$, respectively, denoted respectively by  $({\mathcal
T}(t))_{t\geq0}$ and $({\mathcal
T}_d(t))_{t\geq0}$ (as before let $({\mathcal
T}_{-1}(t))_{t\geq0}$ be the extension of $({\mathcal
T}(t))_{t\geq0}$ to $H \times H_{- \half}$).
 
\medskip

The paper is organized as follows. The second section deals with the
well--posedness of the problem while, in section \ref{sectasanalysis}, we prove
an exponential stability result of the shifted system associated to \rfb{damped1}--\rfb{output} where $B \in \mathcal{L}(U,H)$. 

In section \ref{Numerics} we construct suitable numerical schemes to find an approximate solution of the different problems studied in this work: finite difference scheme or finite volume method. For each scheme, we design a discrete energy which must be preserved in the first step of the stabilization method. Finally the section \ref{Numerical-results} is devoted to present
numerical experiments of each studied case in order to confirm the theoretical results.

\section{Well-posedness} \label{wellp}
\setcounter{equation}{0}

We study the well-posedness of the problem \rfb{damped1}-\rfb{output} in two cases: 
\begin{itemize}
\item
general case, where $B \in \mathcal{L}(U,H_{-\half})$ 
\item
bounded case, where $B \in \mathcal{L}(U,H)$. 
\end{itemize}

\subsection{General case}

Consider the evolution {\color{black}problems}
 \be 
\label{OPEN1}
\ddot{y}^j(t) +
 Ay^j(t)  \, 
=  Bv^j(t), \,\hbox{ in } (jT_0,(j+1) T_0), \, j \in \nline^*,
\ee
\be
\label{eq3}
y^j(jT_0)= \dot{y}^j(jT_0) = 0, \, j \in \nline^*.
\ee

\be 
\label{eq4}
\ddot{\phi}(t) +
 A\phi(t)  \, 
=  0, \, \hbox{ in }  (0,+\infty),
\ee
\be
\label{OPEN2}
\phi(0)= \phi_0, \, \dot{\phi}(0) = \phi_1.
\ee
A natural question is the regularity of $y^j$ when $v^j \in L^2(jT_0,(j+1)T_0;U), \, j \in \nline^*$. By 
applying standard energy estimates we can easily check that 
$y^j \in C([jT_0,(j+1)T_0];H) \cap C^1([jT_0,(j+1)T_0];H_{- \half})$. However if $B$ satisfies a 
certain admissibility condition then $y^j$ is more regular. More precisely the 
following result, which is a version of the general transposition method 
(see, for instance, Lions and Magenes \cite{linsmag} and {\color{black}\cite{lions}}) holds true.

It is clear that the system \rfb{eq4}--\rfb{OPEN2} admits a unique solution $\phi$ 
having the regularity 
$$
\phi \in C([0,T_0];H_\half) \cap C^{1}([0,T_0];H),
$$
$$
\left(
\begin{array}{lc}
\phi \\ \dot{\phi}
\end{array}
\right)(t) = {\mathcal  T}(t)\left(
\begin{array}{ccc}\phi_0\\\phi_1\end{array}\right), \quad
0\leq t \leq T_0.
$$
Moreover, according to assumption \rfb{inegobsv},
$B^* \phi(\cdot)\in H^1(0,T_0), $ and for all $T \in (0, T_0)$ there exists a constant
$C>0$
such that 
\be
\Vert B^* \dot{\phi}(\cdot)\Vert_{L^2(0,T;U)}\le C \, \Vert (\phi_0,\phi_1)\Vert_{\mathcal{H}},
\FORALL (\phi_0,\phi_1)\in H_1 \times H_\half.
\label{CACHE1}\ee

\begin{lemma} \label{ex}
Suppose that $v^j \in L^2([jT_0,(j+1)T_0];U), \, j \in \nline^*$.  
Then the problem \rfb{OPEN1}--\rfb{eq3} admits a unique solution 
having the regularity
\be
y^j \in C([jT_0,(j + 1) T_0];H_\half) \cap C^{1}([ jT_0,(j + 1) T_0];H), \  j \in \nline^*,
\label{REG1}\ee
and
$$
\left(
\begin{array}{lc}
y^{j} \\
\dot{y}^{j}
\end{array}
\right)(t) =   \ds \int_{jT_0}^{t} {\mathcal  T}_{-1}(t-jT_0-s)\left(
\begin{array}{ccc}0\\B v_j(s)\end{array}\right) \, ds, \quad
j T_0 \leq t \leq (j+1) T_0,\  j \geq 1.
$$
\end{lemma}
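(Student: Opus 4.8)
The plan is to reduce the statement to the classical duality between admissible control operators and admissible observation operators. After a time translation (legitimate because $\mathcal{A}$ generates a group and the Cauchy data vanish at $t=jT_0$), it suffices to treat, on $[0,T_0]$, the problem
\[
\ddot{y}(t)+Ay(t)=Bv(t),\qquad y(0)=\dot{y}(0)=0,\qquad v\in L^2(0,T_0;U).
\]
Writing $z=(y,\dot{y})$ and $\mathcal{B}v=(0,Bv)\in H\times H_{-\half}$, this becomes a first-order system with source $\mathcal{B}v$, whose mild solution
\[
z(t)=\int_0^t \mathcal{T}_{-1}(t-s)\,\mathcal{B}v(s)\,ds
\]
is, by the elementary energy estimates recalled just before the statement, well defined and belongs a priori to $C([0,T_0];H\times H_{-\half})$, i.e. $y\in C([0,T_0];H)\cap C^1([0,T_0];H_{-\half})$; undoing the translation this is exactly the displayed representation formula (and uniqueness of the mild solution gives uniqueness in the lemma). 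What remains is to upgrade this to $z\in C([0,T_0];\mathcal{H})$.

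The core step is the a priori bound $\|z(t)\|_{\mathcal{H}}\le C(T_0)\,\|v\|_{L^2(0,T_0;U)}$, uniform in $t\in[0,T_0]$, which I would prove by testing against $\psi\in\mathcal{D}(\mathcal{A})$. Since $\mathcal{A}$ is skew-adjoint ($\mathcal{A}^*=-\mathcal{A}$, $\mathcal{T}(t)^*=\mathcal{T}(-t)$) and the adjoint of $\mathcal{B}$ is $(\varphi_1,\varphi_2)\mapsto B^*\varphi_2$, a Fubini exchange gives
\[
\langle z(t),\psi\rangle_{\mathcal{H}}=\int_0^t \big\langle v(s),\,B^*\dot{\phi}(s-t)\big\rangle_U\,ds,
\]
where $\phi$ solves \eqref{eq4} with $(\phi,\dot\phi)(0)=\psi$, so that $(\phi,\dot\phi)(\sigma)=\mathcal{T}(\sigma)\psi$ for all $\sigma\in\rline$. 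By Cauchy--Schwarz, a change of variable, and time-reversibility of the free evolution one gets $\int_0^t\|B^*\dot\phi(s-t)\|_U^2\,ds\le\int_{-T_0}^{0}\|B^*\dot\phi(\sigma)\|_U^2\,d\sigma$, and the closed-loop admissibility assumption \eqref{inegobsv}, applied on $(0,T_0)$ to the translated data $\mathcal{T}(-T_0)\psi\in\mathcal{D}(\mathcal{A})$ and using that $\mathcal{T}$ is isometric, bounds this last integral by $C(T_0)^2\|\psi\|_{\mathcal{H}}^2$. Hence $|\langle z(t),\psi\rangle_{\mathcal{H}}|\le C(T_0)\|v\|_{L^2(0,T_0;U)}\|\psi\|_{\mathcal{H}}$ for all $\psi$ in the dense subspace $\mathcal{D}(\mathcal{A})$, which yields $z(t)\in\mathcal{H}$ with the asserted bound.

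Continuity in time would then follow by approximation: pick $v_n\in C^1([0,T_0];U)$ with $v_n(0)=0$ and $v_n\to v$ in $L^2(0,T_0;U)$; the corresponding solutions $z_n$ are classical, hence in $C([0,T_0];\mathcal{H})$, while the bound of the previous paragraph applied to $v-v_n$ gives $\sup_{t\in[0,T_0]}\|z(t)-z_n(t)\|_{\mathcal{H}}\le C(T_0)\|v-v_n\|_{L^2}\to0$. Thus $z$ is a uniform limit of $\mathcal{H}$-valued continuous functions, so $z\in C([0,T_0];\mathcal{H})$, which is \eqref{REG1}. I expect the only genuinely delicate points to be the bookkeeping that makes the duality rigorous: interpreting the Duhamel integral as an $(H\times H_{-\half})$-valued Bochner integral, justifying the pairing with $\psi\in\mathcal{D}(\mathcal{A})\subset H_1\times H_\half$ and the Fubini exchange, and getting the sign right in $\mathcal{T}(t)^*=\mathcal{T}(-t)$ so that \eqref{inegobsv} applies verbatim; all of this is routine and follows the standard theory of admissible operators, see \cite{AN,tucsnakweiss}.
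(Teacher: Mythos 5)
Your argument is correct and follows essentially the same route as the paper: rewrite \rfb{OPEN1}--\rfb{eq3} as the first-order system $\dot Z+\mathcal{A}Z=\mathcal{B}v$ with $Z(0)=0$, identify $\mathcal{B}^{*}\mathcal{S}^{*}(t)\psi=B^{*}\dot\phi(t)$ for the free solution $\phi$, and feed this into the admissibility hypothesis \rfb{inegobsv}. The only difference is that the paper concludes \rfb{REG1} by citing the abstract regularity theorem for admissible control operators (Theorem 3.1 of \cite{ben}, see also \cite{tucsnakweiss}), whereas you reprove that theorem inline via the duality bound $\|Z(t)\|_{\mathcal{H}}\le C(T_0)\,\|v\|_{L^{2}(0,T_0;U)}$ and density of smooth inputs, which is precisely the standard proof of the cited result.
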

\begin{proof}
If we set $Z(t) = \left(
\begin{array}{ll}
y^j(t+jT_0) \\
\nm
\ds
\dot{y}^j(t+jT_0)
\end{array}
\right)$ it is clear that \rfb{OPEN1}--\rfb{eq3} can be written as
\[
\dot{Z}^j + {\mathcal A} Z^j(t) = {\mathcal B} v^j(t+jT_0) \hbox{ on } (0, T_0), \, Z^j(0) = 0,
\]
where 
\[
{\mathcal A} = \left(
\begin{array}{cc}
0 & - I \\
\nm
\ds
A &0
\end{array}
\right): \mathcal{H} \subset [{\mathcal D}({\mathcal A})]^{\prime} \rightarrow [{\mathcal D}({\mathcal A})]^{\prime} ,
\]
\[
{\mathcal B} = \left(
\begin{array}{ll}
0 \\
\nm
\ds
B
\end{array}
\right): U \rightarrow [{\mathcal D}({\mathcal A})]^{\prime}.
\]
{\color{black} Here, these operators correspond respectively to the extensions of $$\left(
\begin{array}{cc}
0 & - I \\
\nm
\ds
A &0
\end{array}
\right) \; \hbox{and} \; \left(
\begin{array}{ll}
0 \\
\nm
\ds
B
\end{array}
\right)$$ to the extrapolation space $H \times 
H_{-\frac{1}{2}}.$}

It is well known that ${\mathcal A}$ is a skew adjoint operator so it generates a group of isometries in $[{\mathcal D}({\mathcal A})]^{\prime} $, denoted by ${\mathcal S}(t) (= {\mathcal T}_{-1}(t))$.

After simple calculations we get that the operator ${\mathcal B}^*: {\mathcal D}({\mathcal A}) \rightarrow U$ is given by 
\[
{\mathcal B}^*  \left(\begin{array}{c}
u^j \\
v^j
\end{array}
\right) = B^* v^j , \,\forall \, 
(u^j,v^j) \in  {\mathcal D}({\mathcal A}).
\]
This implies that 
\[
{\mathcal B}^*{\mathcal S}^*(t) 
\begin{pmatrix} \phi_0 \cr \phi_1 \end{pmatrix} = B^* \dot{\phi} (t), \, \forall \, 
(\phi_0,
\phi_1) \in  {\mathcal D}({\mathcal A}),
\]
with $\phi$ satisfying \rfb{eq4}--\rfb{OPEN2}. From the inequality above and 
\rfb{CACHE1} we deduce that there exists a constant {\color{black}$C:= C(T_0) > 0$} such that 
for all $T\in (0, {\color{black}T_0})$
\[
\int_{0}^{T}  \left\| {\mathcal B}^*{\mathcal S}^*(t) \left(\begin{array}{c}
\phi_0 \\
\phi_1
\end{array}
\right) \right\|^2_{U} \, dt \le C \, ||(\phi_0,\phi_1)||^2_{\mathcal{H}}, \, \forall \, 
(\phi_0,\phi_1) \in  {\mathcal D}({\mathcal A}).
\]
According to Theorem 3.1 in \cite[p.187]{ben} (see also \cite{tucsnakweiss}) the inequality above implies 
the interior regularity  \rfb{REG1}.
\end{proof}

{\color{black} We now state the following proposition.
\begin{proposition}\label{propexistunicb}
Assume that the assumption \rfb{inegobsv} holds, then the system
\rfb{damped1}--\rfb{output} is well--posed. More precisely, for every
$(u_0,u_1)\in {\mathcal  H}$, the solution of \rfb{damped1}--\rfb{output} is given by 
\be
\label{existresl}
\left(
\begin{array}{ccc}
u(t)\\ \dot{u}(t) 
\end{array}\right)= \left\{
\begin{array}{ll}
\left(
\begin{array}{ccc}
u^0(t)\\ \dot{u}^0(t)
\end{array}\right) = {\mathcal  T}(t)\left(
\begin{array}{ccc}u_0\\u_1\end{array}\right), \, 0 \leq t \leq  T_0, \\
\left(
\begin{array}{ccc}
u^{j}(t)\\ \dot{u}^{j}(t)
\end{array}\right) = {\mathcal  T}(t-j T_0)\left(
\begin{array}{ccc}u^{j-1}(j T_0)\\ \dot{u}^{j-1}(j T_0)\end{array}\right) + \\ \ds \int_{jT_0}^{t} {\mathcal  T}_{-1}(t-s)\left(
\begin{array}{ccc}0\\ - \mu \, BB^* \dot{u}^{j-1}(s-T_0)\end{array}\right) \, ds, \, \\
\hspace{5cm} j T_0 \leq t \leq (j+1) T_0, j \geq 1
\end{array}
\right.
\ee
and satisfies $(u^j,\dot{u}^j) \in C([jT_0, (j+1)T_0], {\mathcal  H}), \, j \in \nline.$ Moreover, there exist two positive constants  $M$ and $\omega$ such that
\be\label{continuitycas1}
\|(u(t),\dot{u}(t))\|_{\mathcal  H}^2\leq  M e^{\omega t} \, \left\| (u_0,u_1)\right\|^2_{{\mathcal H}}, \forall t\geq 0.
\ee
\end{proposition}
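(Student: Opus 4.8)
The plan is to construct the solution by the \emph{method of steps}: since on the interval $[jT_0,(j+1)T_0]$ the delayed term $\mu\,BB^*\dot u(t-T_0)$ involves the solution only on the preceding interval, the problem reduces there to a \emph{non-delayed} inhomogeneous Cauchy problem with an already known forcing term; one then glues the pieces at the times $jT_0$ and estimates their growth. The formula \eqref{existresl} is precisely the one produced by this procedure, so the task is to justify each step.

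First I would give the delayed feedback a meaning as an \emph{admissible} source. Writing the free dynamics as $\dot z=\mathcal{A}z$, assumption \eqref{inegobsv} says exactly that the map $(w_0,w_1)\mapsto B^*\dot\phi$ extends by density to a bounded operator from $\mathcal{H}$ into $L^2(0,T;U)$ for every $T>0$; hence for data $(v_0,v_1)\in\mathcal{H}$ the function $s\mapsto B^*\dot\phi(s)$ lies in $L^2_{loc}([0,\infty);U)$, so $s\mapsto \mu\,BB^*\dot\phi(s)\in L^2_{loc}([0,\infty);H_{-\half})$ is a legitimate forcing term for the extended group $(\mathcal{T}_{-1}(t))_{t\ge0}$. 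Since $\mathcal{A}^{*}=-\mathcal{A}$ and $(\mathcal T(t))$ is a group of isometries, the dual (``input-to-state'') form of \eqref{inegobsv} holds with the same constant: for every $f\in L^2(0,T;U)$ the map $t\mapsto \int_0^t \mathcal{T}_{-1}(t-s)(0,-\mu\,Bf(s))\,ds$ belongs to $C([0,T],\mathcal{H})$ with norm at most $|\mu|\,C(T)\,\|f\|_{L^2(0,T;U)}$ (cf. \cite{AN,tucsnakweiss}).

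Next I would build $(u^j,\dot u^j)$ by induction on $j$. On $[0,T_0]$ the system is \eqref{damped1}, so $(u^0,\dot u^0)=\mathcal{T}(\cdot)(u_0,u_1)\in C([0,T_0],\mathcal{H})$. Assuming $(u^{j-1},\dot u^{j-1})\in C([(j-1)T_0,jT_0],\mathcal{H})$, the first paragraph gives $B^*\dot u^{j-1}(\cdot-T_0)\in L^2(jT_0,(j+1)T_0;U)$, so the second line of \eqref{existresl} is well defined and, again by the admissibility estimate together with the isometry of $\mathcal{T}$, lies in $C([jT_0,(j+1)T_0],\mathcal{H})$; a routine check shows it is the unique mild solution of $\ddot u^j+Au^j+\mu\,BB^*\dot u^{j-1}(t-T_0)=0$ on that interval with the prescribed value at $t=jT_0$. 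Evaluating \eqref{existresl} at $t=jT_0$ the Duhamel integral vanishes and $\mathcal{T}(0)=I$, which yields the matching identity $(u^j(jT_0),\dot u^j(jT_0))=(u^{j-1}(jT_0),\dot u^{j-1}(jT_0))$, so the pieces glue into a global $(u,\dot u)$ with the asserted regularity and formula, and uniqueness propagates from uniqueness on each subinterval. For the exponential bound, set $a_j:=\sup_{t\in[jT_0,(j+1)T_0]}\|(u^j(t),\dot u^j(t))\|_{\mathcal{H}}$ with $a_0=\|(u_0,u_1)\|_{\mathcal{H}}$; on $[jT_0,(j+1)T_0]$ the free part of \eqref{existresl} is bounded by $a_{j-1}$ and, by the input-to-state estimate and then \eqref{inegobsv}, the Duhamel part is bounded by $|\mu|\,C(T_0)^2\,a_{j-1}$, whence $a_j\le (1+|\mu|\,C(T_0)^2)\,a_{j-1}$ and $a_j\le (1+|\mu|\,C(T_0)^2)^{j}\,\|(u_0,u_1)\|_{\mathcal{H}}$. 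Since $j\le t/T_0$ for $t\in[jT_0,(j+1)T_0]$, taking $M=1$ and $\omega=\tfrac{2}{T_0}\ln(1+|\mu|\,C(T_0)^2)$ (and $\omega$ any fixed positive number when $\mu=0$) gives \eqref{continuitycas1}.

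The hard part will be the first step: one must be sure that the delayed feedback $BB^*\dot u^{j-1}(\cdot-T_0)$ is an \emph{admissible} input, i.e. that the associated Duhamel term is $\mathcal{H}$-valued and continuous in time and not merely $H\times H_{-\half}$-valued. This is exactly what the closed-loop admissibility assumption \eqref{inegobsv} — together with its standard dual, input-to-state, counterpart — provides, and it is also what keeps the constants in the growth estimate finite. Everything else (the gluing at the points $jT_0$ and the bookkeeping with the isometric group $\mathcal{T}$) is straightforward.
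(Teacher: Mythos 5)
Your construction of the solution (method of steps, admissibility of the control operator obtained by duality from \eqref{inegobsv}, continuity of the Duhamel term in $\mathcal H$, gluing at the nodes $jT_0$) is essentially the paper's own argument, carried out there via Lemma \ref{ex}; that part is fine. The genuine gap is in your proof of the growth estimate \eqref{continuitycas1}. Your scalar recursion $a_j\le(1+|\mu|\,C(T_0)^2)\,a_{j-1}$ rests on the bound $\|B^*\dot u^{j-1}\|_{L^2((j-1)T_0,jT_0;U)}\le C(T_0)\,a_{j-1}$, which you justify by \eqref{inegobsv}. But \eqref{inegobsv} (in the reading you adopt) controls the observation of a \emph{free} trajectory by its initial data, and $u^{j-1}$ is a free trajectory only when $j=1$; for $j\ge 2$ it carries the forcing $-\mu BB^*\dot u^{j-2}(\cdot-T_0)$. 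Since $B^*$ is only bounded from $H_{\frac12}$ to $U$ and $\dot u^{j-1}(t)\in H$, there is also no pointwise-in-time bound of $B^*\dot u^{j-1}$ by the state norm, so the $L^2$ observation norm of the forced trajectory is \emph{not} controlled by $\sup_t\|(u^{j-1}(t),\dot u^{j-1}(t))\|_{\mathcal H}$ alone; the best available estimate is of the form $\|B^*\dot u^{j-1}\|_{L^2}\le C\big(\|(u^{j-1}((j-1)T_0),\dot u^{j-1}((j-1)T_0))\|_{\mathcal H}+|\mu|\,\|B^*\dot u^{j-2}(\cdot-T_0)\|_{L^2}\big)$, which requires the boundedness of the input--output map (the ``closed-loop'' content of \eqref{inegobsv} that the paper actually invokes, and which is strictly more than observation admissibility of the free system). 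Consequently your recursion in $a_j$ alone does not close.

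The repair is exactly what the paper does: iterate a two-component quantity, e.g.
\begin{equation*}
\Lambda_j=\|(u^j(jT_0),\dot u^j(jT_0))\|_{\mathcal H}+\Big(\int_{(j-1)T_0}^{jT_0}\|B^*\dot u^{j-1}(s)\|_U^2\,ds\Big)^{1/2},
\end{equation*}
or equivalently carry $b_j:=\|B^*\dot u^{j}\|_{L^2(jT_0,(j+1)T_0;U)}$ alongside $a_j$ and use the coupled inequalities $a_j\le a_{j-1}+|\mu|C_1(T_0)\,b_{j-1}$ and $b_j\le C(T_0)\big(a_{j-1}+|\mu|\,b_{j-1}\big)$, the second coming from the input--output boundedness and the first from the input-to-state (control admissibility) estimate you already stated. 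This coupled recursion yields $\Lambda_j\le C_2(T_0)^j(1+|\mu|)^j\|(u_0,u_1)\|_{\mathcal H}$ and hence the exponential bound \eqref{continuitycas1} with $j\le t/T_0$, exactly as in the paper. With that modification (and with \eqref{inegobsv} read as closed-loop admissibility rather than mere observation admissibility of the free dynamics), your argument becomes complete and coincides with the paper's.
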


\br
The above Proposition suggests that the mapping
$$
T_t:{\mathcal H} \to {\mathcal H}: (u_0,u_1) \mapsto (u(t),\dot{u}(t))
$$
defines a strongly continuous semigroup but it is not the case since the semigroup property $T_{t+s}=T_t T_s$ is not valid in general.
\er

For any solution of problem \rfb{damped1}--\rfb{output} we define the energy
\begin{equation}\label{energy}
\hspace{3cm}
E(t) =\frac{1}{2} \left\|(u(t),\dot{u}(t)\right\|^2_{\mathcal{H}}, \, t \geq 0.
\end{equation}
}
\begin{proof}
First of all, let us prove the equation \rfb{existresl}.
The existence result for problem \rfb{damped1}--\rfb{output} is now made by induction, {\color{black} the details are omitted at this point, but we explain only the main principle}.
First on $[0,T_0]$ (case   $j=0$), we take
$$
\left(
\begin{array}{ccc}
u^0(t)\\ \dot{u}^0(t)
\end{array}\right) = {\mathcal  T}(t)\left(
\begin{array}{ccc}u_0\\u_1\end{array}\right), \, \forall t \in [0,T_0].$$
That is clearly a solution of \rfb{damped1}--\rfb{output} on $(0,T_0)$
  and  that has the regularity $(u^0,\dot{u}^0) \in C([0,T_0];{\mathcal H})$.
Now for $j\geq 1$,   we take for all $t\in [j T_0 ,(j+1) T_0]$,
\beqs
\left(
\begin{array}{ccc}
u^{j}(t)\\ \dot{u}^{j}(t)
\end{array}\right) &=&\left(
\begin{array}{ccc}
\phi(t+jT_0)\\ \dot{\phi}(t+jT_0)
\end{array}\right) + 
\left(
\begin{array}{ccc}
y^{j}(t)\\ \dot{y}^j(t)
\end{array}\right)
\\
&=&{\mathcal  T}(t{\color{black}-}jT_0)\left(
\begin{array}{ccc}u^{j-1}(jT_0) \\ \dot{u}^{j-1}(jT_0)\end{array}\right)   +\ds \int_{jT_0}^{t} {\mathcal  T}_{-1}(t-s)\left(
\begin{array}{ccc}0\\- \mu \, BB^*\dot{u}^{j-1}(s - T_0) \end{array}\right) \, ds, 
\eeqs
where $y^j$ (resp. $\phi$)  is solution of \rfb{OPEN1}--\rfb{eq3} (resp. \rfb{eq4}--\rfb{OPEN2})
with   $v^j(t) = - \mu \, B^* \dot{u}^{j-1}(t - T_0)$ (that belongs to $L^2(jT_0,(j+1)T_0;U)$ because the operator $B^*$ is an input admissible operator according to assumption \rfb{inegobsv}) and   $\phi_0 = u^{j-1}(jT_0),$  $\phi_1 = \dot{u}^{j-1}(jT_0)$.
This solution has the announced regularity due to the above arguments.

Let us now prove the estimation \rfb{continuitycas1}.

For the system
\rfb{damped1}--\rfb{output}, the estimate (\ref{inegobsv}) is used to prove our existence result by iteration.
Namely, for all 
$j \in \nline$, we prove by iteration that 
$(u^j,\dot{u}^j)$ as defined in the statement belongs to $C([jT_0, (j+1)T_0], {\mathcal  H})$, 
and satisfies
\be 
\label{inegobsiterative}
 \Lambda_{j+1}\leq C_2(T_0)(1+|\mu|) \Lambda_{j}
\ee
for some positive constant $C_2(T_0)$ and
where 
$$\Lambda_j=  
\|(u^j(jT_0),\dot{u}^j(jT_0))\|_{\mathcal H}+\left(\int_{(j-1)T_0}^{jT_0}  \left\|B^* \dot{u}^{j-1}(s)\right\|_{U}^2 d\,s\right)^{\frac12},
$$
with the convention $u^{-1}=0$.
 
Note that
$\left(
\begin{array}{ccc}
u^0(t)\\ \dot{u}^0(t)
\end{array}\right) = {\mathcal  T}(t)\left(
\begin{array}{ccc}u_0\\u_1\end{array}\right)$ is clearly in $C([0, T_0], {\mathcal  H})$, 
and satisfies \rfb{inegobsiterative} since ${\mathcal  T}(t)$ is a semigroup of contractions.

Now for $j\geq 1$, we assume that the result holds for $j-1;$ then
by (\ref{inegobsv}), we know that 
 $(u^j,\dot{u}^j)\in C([jT_0, (j+1)T_0], {\mathcal  H})$
and that
\beqs
\left(\int_{jT_0}^{(j+1)T_0}  \left\|B^{*} \dot{u}^j(s)\right\|_{U}^2 d\,s\right)^{\frac12} &\leq& C(T_0)\,
\Big(\|(u^j(jT_0),\dot{u}^j(jT_0))\|_{\mathcal H}
\\
&+&|\mu| \Big (
\int_{jT_0}^{(j+1)T_0}  \left\|B^{*} \dot{u}^{j-1}(s-T_0)\right\|_{U}^2 d\,s\Big )^{\frac12}\Big).
\eeqs

On the other hand using Theorem 4.4.3 and Proposition 4.2.2 of \cite{tucsnakweiss} (see below),
we know that
there exists $C_1(T_0)>0$ such that for all $t\in [jT_0, (j+1)T_0]$,
\be 
\label{inegobsiterative2}
\|(u^j(t),\dot{u}^j(t))\|_{\mathcal H}\leq \|(u^j(jT_0),\dot{u}^j(jT_0))\|_{\mathcal H}+|\mu| C_1(T_0) \left(
\int_{jT_0}^{(j+1)T_0}  \left\|B^{*} \dot{u}^{j-1}(s-T_0)\right\|_{U}^2 d\,s\right)^{\frac12}.
\ee
This estimate evaluated at $t= (j+1)T_0$ and added to the previous one yields 
\rfb{inegobsiterative} with $C_2(T_0)= {\color{black} 2} \max\{1, C_1(T_0), C(T_0)\}.$
This proves that the result holds for all $j$.

By iteration, \rfb{inegobsiterative} implies that
$$
\Lambda_{j}\leq C_2(T_0)^j(1+|\mu|)^j\|(u_0,u_1)\|_{\mathcal H}, \forall j\in \nline.
$$
This estimate and \rfb{inegobsiterative2} yield \rfb{continuitycas1}.
\end{proof}

\subsection{Bounded case} \label{bcase}
Here we assume that the operator $B \in \mathcal{L}(U,H)$. So we will give a well--posedness result for problem \eqref{damped1}--\eqref{output} by using semigroup theory.

\medskip

We introduce the auxiliary variable, {\color{black} as in \cite{amman,NPMCSS}},
\begin{equation}\label{defz0}
z(\rho ,t)= B^* \dot{u}(t-T_0\rho ),\quad \ \rho\in (0,1), \ t>0.
\end{equation}
Then, problem \eqref{damped1}--\eqref{output}
is equivalent to
\begin{align}
{}& \ddot{u} (t) + Au(t) + \mu \, Bz(1,t) = 0 ,  && \mbox{\rm in} \quad (0, + \infty),\label{a1bis}\\
{}& T_0 z_t(\rho,t)+z_{\rho}(\rho, t)=0\quad &&\mbox{\rm in}\quad (0,1)\times (0,+\infty ),\label{Pz}\\
{}&u(0) = u_0,  \quad  \dot{u}(0)=u_1, &&  \quad \label{a4bis}
\\
{}&z(\rho, 0) = 0,  \quad  && \mbox{\rm in} \quad (0,1),\label{a5bis}
\\
{}&z(0,t)=B^* \dot{u}(t),\quad &&    \ t>0.\label{a6}
\end{align}
If we denote
$$
U:=\left (
u,
\dot{u},
z
\right )^\top,
$$
then
$$
\dot{U}:=\left (
\dot{u},
\ddot{u},
z_t
\right )^\top=
\left (
\dot{u},
- Au - \mu \, Bz(1,\cdot),
-T_0^{-1}z_{\rho}
\right )^\top.
$$
Therefore, problem (\ref{a1bis})--(\ref{a6}) can be rewritten as
\begin{equation}\label{formulA}
\left\{
\begin{array}{l}
\dot{U} = {\mathcal A}_g U,\\
U(0) = \left (
u_0,
u_1,
0
\right )^\top,
\end{array}
\right.
\end{equation}
where the operator ${\mathcal A}_g$ is defined by
$$
{\mathcal A}_g \left (
\begin{array}{l}
u\\
v\\
z
\end{array}
\right ):=
\left (
\begin{array}{l}
v\\
- Au - \mu \, {\color{black}Bz(1,\cdot)}\\
-T_0^{-1}z_{\rho}
\end{array}
\right ),
$$
\noindent with domain
\begin{equation}\label{domain}
\displaystyle{
{\mathcal D} ({\mathcal A}_g):=\Big \{\ (u,v,z)^\top\in
H_1
\times H_\half \times H^1(0,1; U)\: B^*v=z(0)}
 \Big\},
\end{equation}
in the Hilbert space
\begin{equation}\label{space}
{\mathcal H}_g:= H_\half \times H \times L^2(0,1;U),
\end{equation}
equipped with the standard inner product
\[
((u,v,z), (u_1,  v_1,  z_1))_{{\mathcal H}_g}
=
(A^\half u,A^\half u_{1})_{H} + (v,v_1)_H \, + \,
\xi \int_0^1 (z,z_1)_{U} \, d\rho,
\]
where $\xi>0$ is a parameter fixed later on.

We will show that ${\mathcal A}_g$ generates a $C_0$ semigroup on ${\mathcal H}_g$
by proving that ${\mathcal A}_g - c Id$ is maximal dissipative for an appropriate choice of $c$ in function of $\xi, T_0, B^*$ and $\mu$.
Namely we prove the next result.
\begin{lemma}\label{lmaxdiss}
If $\xi  \geq |\mu| \,T_0$, then ${\mathcal A}_g -  \left(\frac{|\mu|}{2} + \frac{\xi}{2T_0}\right) \, \left\|B^*\right\|^2_{\mathcal{L}(H,U)} \, Id$ is maximal dissipative in ${\mathcal H}_g$.
\end{lemma}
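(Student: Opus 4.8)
The plan is to verify the two Lumer--Phillips conditions for the operator $\mathcal{B}_g:=\mathcal{A}_g - c\,Id$ with $c:=\left(\frac{|\mu|}{2} + \frac{\xi}{2T_0}\right)\left\|B^*\right\|^2_{\mathcal{L}(H,U)}$, namely dissipativity of $\mathcal{B}_g$ in $\mathcal{H}_g$ and surjectivity of $\lambda\,Id - \mathcal{B}_g$ for some (equivalently all) $\lambda>0$. First I would compute $\mathrm{Re}\,(\mathcal{A}_g(u,v,z),(u,v,z))_{\mathcal{H}_g}$ for $(u,v,z)\in\mathcal{D}(\mathcal{A}_g)$. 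The two ``wave'' terms $(A^\half v,A^\half u)_H + (v,-Au)_H$ cancel after integration by parts (using $Au\in H$ on the domain), leaving $-\mu(v,Bz(1))_H = -\mu(B^*v,z(1))_U$ from the coupling, plus the transport contribution $-\frac{\xi}{T_0}\int_0^1 (z_\rho,z)_U\,d\rho = -\frac{\xi}{2T_0}\big(|z(1)|_U^2 - |z(0)|_U^2\big)$. Invoking the domain constraint $z(0)=B^*v$, this becomes
\[
\mathrm{Re}\,(\mathcal{A}_g U,U)_{\mathcal{H}_g} = -\mu\,(B^*v,z(1))_U - \frac{\xi}{2T_0}|z(1)|_U^2 + \frac{\xi}{2T_0}|B^*v|_U^2.
\]
Bounding the cross term by Young's inequality, $|\mu|\,|(B^*v,z(1))_U|\le \frac{|\mu|}{2}|B^*v|_U^2 + \frac{|\mu|}{2}|z(1)|_U^2$, and using $\xi\ge |\mu|T_0$ so that $\frac{\xi}{2T_0}\ge\frac{|\mu|}{2}$, the $|z(1)|_U^2$ terms combine to something $\le 0$, and the remaining $|B^*v|_U^2$ coefficient is $\frac{|\mu|}{2}+\frac{\xi}{2T_0}$; since $|B^*v|_U^2\le \|B^*\|^2_{\mathcal{L}(H,U)}|v|_H^2\le \|B^*\|^2_{\mathcal{L}(H,U)}\|U\|^2_{\mathcal{H}_g}$, this yields $\mathrm{Re}\,(\mathcal{A}_g U,U)_{\mathcal{H}_g}\le c\,\|U\|^2_{\mathcal{H}_g}$, i.e. $\mathrm{Re}\,(\mathcal{B}_g U,U)_{\mathcal{H}_g}\le 0$.

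Next I would establish maximality: given $(f,g,h)\in\mathcal{H}_g$ and $\lambda>0$ large enough (taking $\lambda>c$ suffices, or one solves $(Id-\mathcal{B}_g)U=F$ directly), solve
\[
\lambda u - v = f,\qquad \lambda v + Au + \mu Bz(1) = g,\qquad \lambda z + T_0^{-1}z_\rho = h,
\]
with $(u,v,z)\in\mathcal{D}(\mathcal{A}_g)$ (up to the shift by $c$, which merely changes $\lambda$). The third equation is a linear ODE in $\rho$ that can be solved explicitly once $z(0)=B^*v$ is imposed: $z(\rho)=B^*v\,e^{-\lambda T_0\rho} + T_0\int_0^\rho e^{-\lambda T_0(\rho-\sigma)}h(\sigma)\,d\sigma$. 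Substituting $v=\lambda u - f$ expresses $z(1)$ affinely in terms of $u$, and plugging into the second equation gives an elliptic problem of the form $(\lambda^2 + \theta)u + Au = \tilde{g}$ in $H_{-\half}$, where the extra bounded term $\theta\,BB^*$ (coming from $\mu B z(1)$) has the right sign; since $A$ is positive self-adjoint and invertible, $\lambda^2 Id + A$ is coercive on $H_\half$, and by Lax--Milgram (absorbing the lower-order perturbation for $\lambda$ large, or noting it is a nonnegative self-adjoint perturbation) one gets a unique $u\in H_1$, hence $v=\lambda u - f\in H_\half$, hence $z\in H^1(0,1;U)$ with $z(0)=B^*v$, and finally $Au+\mu Bz(1)=g-\lambda v\in H$, so $(u,v,z)\in\mathcal{D}(\mathcal{A}_g)$.

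The main obstacle is the surjectivity step, specifically handling the coupling term $\mu Bz(1)$ when assembling the elliptic equation for $u$: one must check that after elimination the bilinear form remains coercive on $H_\half$ (the perturbation $\theta BB^*$ has coefficient $\theta=\mu\lambda e^{-\lambda T_0}$, not obviously nonnegative for $\mu<0$, so one genuinely needs $\lambda$ large enough, which is harmless for the Lumer--Phillips criterion) and that the solution lands in the correct domain, in particular that $Au+\mu Bz(1)\in H$ rather than merely in $H_{-\half}$. The dissipativity computation, by contrast, is the routine part once the integration by parts and the choice of $\xi$ and $c$ are tracked carefully. I would close by invoking the Lumer--Phillips theorem to conclude that $\mathcal{B}_g=\mathcal{A}_g-c\,Id$ is maximal dissipative, hence generates a $C_0$-semigroup of contractions, so that $\mathcal{A}_g$ generates a $C_0$-semigroup on $\mathcal{H}_g$ with growth bound at most $c$.
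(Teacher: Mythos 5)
Your proposal is correct and follows essentially the same route as the paper: the dissipativity estimate via the transport integration, the constraint $z(0)=B^*v$, Young's inequality and the condition $\xi\geq|\mu|T_0$, and then maximality by explicitly solving the ODE in $\rho$, reducing to an elliptic problem for $u$ solved by Lax--Milgram for $\lambda$ large, and checking that $Au+\mu Bz(1)=g-\lambda v\in H$ places $(u,v,z)$ in ${\mathcal D}({\mathcal A}_g)$. Your observation that $\lambda$ must be taken large to absorb the term $\mu\lambda e^{-\lambda T_0}BB^*$ when $\mu<0$ matches the paper's restriction $\lambda\geq|\mu|\,\|B^*\|_{\mathcal{L}(H,U)}$, and is harmless for the Lumer--Phillips criterion, exactly as you say.
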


\begin{proof}
Take $U=(u,v,z)^T\in {\mathcal D}({\mathcal A}_g).$ Then we have
{\color{black}
\beqs
  ({\mathcal A}_g (u,v,z), (u,  v,  z))_{{\mathcal H}_g}
= - \mu \, (z(1),B^*v)_U
-\xi T_0^{-1} \int_0^1 (z_\rho,z)_{U}\, d\rho + \overline{(Au,v)}_H - (Au,v)_H.
\eeqs
Hence, we get
\beqs
\Re ({\mathcal A}_g (u,v,z), (u,  v,  z))_{{\mathcal H}_g}
=- \mu \Re (z(1),B^*v)_{U}
- \frac{\xi}{2 T_0}\left\|z(1)\right\|_{U}^2\,
+\frac{\xi}{2 T_0}\|z(0)\|_{U}^2.
\eeqs
}
Hence reminding that $z(0)=B^*v$ and using Young's inequality we find that
$$
\Re ({\mathcal A}_g (u,v,z), (u,  v,  z))_{{\mathcal H}_g}
\leq \left(\frac{|\mu|}{2}-\frac{\xi}{2 T_0} \right)\|z(1)\|^2_{U}
+ \left(\frac{|\mu|}{2}
+\frac{\xi}{2 T_0} \right) \|B^*v\|^2_{U}.
$$
We find that
\beqs
\Re ({\mathcal A}_g (u,v,z), (u,  v,  z))_{{\mathcal H}_g}
\leq \left(\frac{|\mu|}{2}-\frac{\xi}{2 T_0} \right)\|z(1)\|_{U}^2
+ \left(\frac{|\mu|}{2}
+\frac{\xi}{2 T_0} \right) \|B^*\|^2_{\mathcal{L}(H,U)} \left\|v\right\|^2_H.
\eeqs
The choice of $\xi$ is equivalent to  $\frac{|\mu|}{2}-\frac{\xi}{2 T_0} \leq 0$,
and therefore for $c = \left(\frac{|\mu|}{2} + \frac{\xi}{2T_0} \right) \, \left\|B^*\right\|^2_{\mathcal{L}(H,U)}$,
\be
\label{dissp}
\Re ({\mathcal A}_g (u,v,z), (u,v,z))_{{\mathcal H}_g}
\leq \left(\frac{|\mu|}{2} - \frac{\xi}{2 T_0}\right)\|z(1)\|_{U}^2 + \left(\frac{|\mu|}{2}
+\frac{\xi}{2 T_0} \right) \|B^*\|^2_{\mathcal{L}(H,U)} \left\|v\right\|^2_H.
\ee
As $\|v\|^2_{H}\leq \|(u,v,z)\|^2_{\mathcal{H}_g}$, we get
\be
\label{dissipi}
\Re (({\mathcal A}_g - c Id)(u,v,z), (u,  v,  z))_{{\mathcal H}_g}
\leq \left(\frac{|\mu|}{2} - \frac{\xi}{2 T_0} \right)\|z(1)\|_{U}^2 \leq 0,
\ee
which directly leads to the dissipativeness of ${\mathcal A}^s_g:= {\mathcal A}_g - c Id$.

\medskip

Let us go on with the maximality, namely let us  show that $\lambda I -{\mathcal A}_g$ is surjective for a fixed $\lambda >0.$
Given $(f,g,h)^T\in {\mathcal H}_g,$ we look for a   solution $U=(u,v,z)^T\in {\mathcal D}({\mathcal A}_g) $ of
\be\label{surj}
(\lambda I- {\mathcal A}_g) \left (
\begin{array}{l}
u\\
v\\
z
\end{array}
\right )=
 \left (
\begin{array}{l}
f\\
g\\
h
\end{array}
\right ),
\ee
that is, verifying
\begin{equation}\label{max}
\left\{
\begin{array}{l}
\lambda u-v=f,\\
\lambda v + Au + \mu \, Bz(1) = g,\\
\lambda z + T_0^{-1}z_{\rho}=h.
\end{array}
\right.
\end{equation}
Suppose that we have found $u$ with the appropriate regularity. Then,
\begin{equation}\label{numerare}
v=\lambda u -f
\end{equation}
and we can determine $z.$ Indeed, by (\ref{domain}),
\begin{equation}\label{F3}
z(0)= B^*v,
\end{equation}
and, from (\ref{max}),
\begin{equation}\label{F4}
\lambda z(\rho ) + T_0^{-1} z_{\rho} (\rho )=h(\rho)\quad \mbox{\rm for } \ \rho\in (0,1).
\end{equation}
Then, by (\ref{F3}) and (\ref{F4}), we obtain
\be\label{defz}
z(\rho) = \lambda B^*u \, e^{-\lambda \rho T_0} - B^*f \, e^{-\lambda\rho T_0} + T_0 e^{-\lambda\rho T_0} \int_0^{\rho} h(\sigma ) e^{\lambda\sigma T_0} d\sigma.
\ee
In particular, we have
\begin{equation}\label{F5tilde}
z(1)=   \lambda B^*u \, e^{-\lambda T_0} - B^*f \, e^{-\lambda T_0} + z_0,
\end{equation}
with $z_0\in U$ defined by
\begin{equation}\label{F5star}
z_0=T_0 e^{-\lambda T_0}\int_0^1 h(\sigma ) e^{\lambda\sigma T_0} d\sigma.
\end{equation}
This expression in (\ref{max}) shows that the function $u$ verifies formally
$$
\lambda^2 u + Au + \lambda \, \mu \, BB^* u \, e^{-\lambda T_0} - \mu \, BB^* f \, e^{-\lambda T_0} + \mu \, Bz_0 = g+\lambda f,$$
that is,
\begin{equation}\label{F6}
\lambda^2 u + Au + \lambda \, \mu \, BB^* u \, e^{-\lambda T_0} = g+\lambda f + \mu \, BB^* f \, e^{-\lambda T_0} - \mu \, Bz_0.
\end{equation}
Problem (\ref{F6}) can be reformulated as
\begin{equation}\label{F7}
(\lambda^2 u+ Au + \lambda \, \mu \, BB^* u \, e^{-\lambda T_0}, w)_H =
(g+\lambda f + \mu \, BB^* f \, e^{-\lambda T_0} - \mu \, Bz_0,w)_H,  \quad \forall\
w \in H_\half.
\end{equation}
Using the definition of the adjoint of $B$, we get
\be 
\label{F8}
\lambda^2  (u, w)_H +(A^\half u,A^\half w)_{{\color{black}H}} +   \lambda \, \mu \, e^{-\lambda T_0} \,  (B^*u,  B^*w)_{U} =
\ee
$$
(g+\lambda f,w)_H + \mu \, (e^{-\lambda T_0} \, B^*f - z_0, B^*w)_{U},
\ \forall\,
w\in H_\half.
$$
As the left-hand side of (\ref{F8})
is coercive on $H_\half$, for $\lambda$ sufficiently large (for example {\color{black}$\lambda \geq |\mu| \, \left\|B^*\right\|^2_{\mathcal{L}(H,U)}$)},
the Lax--Milgram lemma guarantees the existence and uniqueness of a solution $u\in H_\half$ of (\ref{F8}).
Once $u$ is obtained we define $v$ by \rfb{numerare} that belongs to $H_\half$
and $z$ by \rfb{defz} that belongs to $H^1(0,1; U)$.
Hence we can set $r=A^\half u$, it belongs to $H$ but owing to \rfb{F8}, it fulfils
\[
\lambda (v,w)_H+ (r, A^\half w)_{H}= (g - \mu \, Bz(1),w)_H, \ \forall w\in H_\half,
\]
or equivalently
\[
(r, B^*w)_{H}= (g - \mu \, Bz(1) -\lambda v ,w)_H,\  \forall w\in H_\half.
\]
As $g - Bz(1) -\lambda v\in H$, this implies that $r$ belongs to $H_\half$ with
\[
A^\half r=g - Bz(1) -\lambda v.
\]
This shows that the triple $U=(u,v,z)$ belongs to ${\mathcal D} ({\mathcal A}_g)$ and
satisfies \rfb{surj}, hence $\lambda I - {\mathcal A}_g$ is surjective
for every $\lambda >0.$
\end{proof}

We have then the following result.
\begin{proposition}\label{propexistunic}
We assume that $\xi \geq |\mu| \, T_0.$ Then, the system
\rfb{damped1}--\rfb{output} is well--posed. More precisely, for every
$(u_0,u_1,0) \in \mathcal{H}_g$, there exists a unique solution $(u,v,z) \in C(0,+\infty, \mathcal{H}_g)$ of \rfb{formulA}. Moreover, if $(u_0,u_1,0)\in {\mathcal D}({\mathcal A}_g)$ then $(u,v,z) \in C(0,+\infty, \mathcal{D}({\mathcal A}_g)) \cap C^1(0,+\infty, {\mathcal H}_g)$ with $v=\dot{u}$ and $u$ is indeed a solution  of \rfb{damped1}--\rfb{output}.
\end{proposition}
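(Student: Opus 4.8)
The plan is to invoke Lemma \ref{lmaxdiss} together with the Lumer--Phillips theorem and a bounded-perturbation argument to obtain that $\mathcal{A}_g$ generates a $C_0$ semigroup on $\mathcal{H}_g$, and then to translate the resulting abstract solution back into a solution of the delay problem \rfb{damped1}--\rfb{output}. Concretely, Lemma \ref{lmaxdiss} asserts that, under the hypothesis $\xi \geq |\mu|\,T_0$, the operator $\mathcal{A}_g^s := \mathcal{A}_g - c\,Id$ with $c = \left(\frac{|\mu|}{2} + \frac{\xi}{2T_0}\right)\|B^*\|^2_{\mathcal{L}(H,U)}$ is maximal dissipative on $\mathcal{H}_g$. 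Since $\mathcal{D}(\mathcal{A}_g)$ is dense in $\mathcal{H}_g$, the Lumer--Phillips theorem gives that $\mathcal{A}_g^s$ generates a $C_0$ semigroup of contractions, and hence $\mathcal{A}_g = \mathcal{A}_g^s + c\,Id$, being a bounded perturbation, generates a $C_0$ semigroup $(S(t))_{t\geq 0}$ on $\mathcal{H}_g$ with $\|S(t)\|_{\mathcal{L}(\mathcal{H}_g)} \leq e^{ct}$ for all $t \geq 0$.

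From the standard theory of linear evolution equations it then follows that, for every $U_0 = (u_0,u_1,0)\in\mathcal{H}_g$, the function $t\mapsto S(t)U_0$ is the unique mild solution of \rfb{formulA} and lies in $C([0,+\infty),\mathcal{H}_g)$; if in addition $U_0 \in \mathcal{D}(\mathcal{A}_g)$, then this function is the unique classical solution, belonging to $C([0,+\infty),\mathcal{D}(\mathcal{A}_g)) \cap C^1([0,+\infty),\mathcal{H}_g)$ and satisfying $\dot U(t)=\mathcal{A}_g U(t)$ pointwise. This already proves the first two assertions.

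It remains, for smooth data $U_0\in\mathcal{D}(\mathcal{A}_g)$, to identify the abstract solution $U=(u,v,z)$ with a genuine solution of \rfb{damped1}--\rfb{output}. Reading off the first component of $\dot U=\mathcal{A}_g U$ yields $\dot u = v$. The third component is the transport equation $T_0 z_t + z_\rho = 0$ in $(0,1)\times(0,+\infty)$, supplemented by the boundary condition $z(0,t)=B^*v(t)=B^*\dot u(t)$ forced by the domain description \rfb{domain}, and by the initial condition $z(\cdot,0)=0$ inherited from $U_0$. Solving along characteristics gives $z(\rho,t)=B^*\dot u(t-T_0\rho)$ for $t\geq T_0\rho$ and $z(\rho,t)=0$ otherwise; in particular $z(1,t)=0$ on $[0,T_0]$ and $z(1,t)=B^*\dot u(t-T_0)$ on $[T_0,+\infty)$. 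Substituting this into the second component $\ddot u + Au + \mu B z(1,\cdot)=0$ recovers precisely $\ddot u + Au = 0$ on $[0,T_0]$ and $\ddot u + Au + \mu BB^*\dot u(\cdot - T_0)=0$ on $[T_0,+\infty)$, i.e. \rfb{damped1}--\rfb{output}; conversely, the equivalence used to pass from \rfb{damped1}--\rfb{output} to \rfb{a1bis}--\rfb{a6} transfers uniqueness from \rfb{formulA} to the delay problem.

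The generation step is essentially handed to us by Lemma \ref{lmaxdiss}, so the only genuinely delicate point is the last one: one must carefully justify the characteristic representation of $z$ from the mixed boundary/initial data, check that the regularity of the classical solution makes this representation and the subsequent substitution into the second equation rigorous, and verify that the switching at $t=T_0$ in \rfb{damped1}--\rfb{int} is faithfully encoded by the vanishing initial datum $z(\cdot,0)=0$.
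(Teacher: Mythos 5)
Your argument is correct and is essentially the route the paper intends: it proves Lemma \ref{lmaxdiss} (maximal dissipativity of $\mathcal{A}_g - c\,Id$) precisely so that Proposition \ref{propexistunic} follows from Lumer--Phillips plus the bounded perturbation $c\,Id$ and standard semigroup solution theory, with the delay equation recovered through the transport variable $z(\rho,t)=B^*\dot u(t-T_0\rho)$ introduced in \rfb{defz0}. Your explicit verification of the characteristic formula and of the switch at $t=T_0$ simply spells out details the paper leaves implicit.
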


\section{Asymptotic behavior} \label{sectasanalysis}
In this section, we show that the semigroup $e^{t{\mathcal A}^s_g},$  where $({\mathcal A}^s_g = {\mathcal A}_g - c Id, \mathcal{D}({\mathcal A}^s_g) = \mathcal{D}({\mathcal A}_g)),$ decays to the null steady state with an exponential decay rate. To obtain this, our technique is based on a frequency domain approach and combines a contradiction argument to carry out a special analysis of the resolvent.

\begin{theorem} \label{lr}
We assume that $\xi > |\mu| \, T_0$. Then, there exist  constants $C, \omega > 0$ such that the semigroup $e^{t{\mathcal A}^s_g}$ satisfies the following estimate
\be
\label{estexp}
\left\| e^{t {\mathcal A}^s_g}\right\|_{\mathcal L({\mathcal H}_g)} \leq C \, e^{-\omega t}, \ \forall \ t > 0.
\ee
\end{theorem}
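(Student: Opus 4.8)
The plan is to use the Gearhart–Prüss–Huang–Greiner frequency-domain characterization: since $({\mathcal A}^s_g, {\mathcal D}({\mathcal A}_g))$ generates a $C_0$-semigroup of contractions on the Hilbert space ${\mathcal H}_g$ (Lemma \ref{lmaxdiss}), the estimate \rfb{estexp} holds if and only if (i) $i\rline \subset \rho({\mathcal A}^s_g)$ and (ii) $\sup_{\beta \in \rline} \left\| (i\beta I - {\mathcal A}^s_g)^{-1}\right\|_{{\mathcal L}({\mathcal H}_g)} < \infty$. I would establish both by a contradiction/compactness argument on the resolvent, which is the standard route and the one the introduction advertises.

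First I would address (i). That $0 \in \rho({\mathcal A}^s_g)$ follows from the surjectivity argument already carried out inside the proof of Lemma \ref{lmaxdiss} (take $\lambda = c$, or note $c > 0$ so $i\cdot 0 = 0$ lies to the left of the dissipativity threshold) together with injectivity coming from strict dissipativity once $\xi > |\mu| T_0$. For $\beta \neq 0$ one supposes $i\beta$ is an eigenvalue or approximate eigenvalue and derives a contradiction; since $A$ has compact resolvent is not assumed here, I would instead directly show $i\beta I - {\mathcal A}^s_g$ is bounded below and surjective: surjectivity is obtained exactly as in Lemma \ref{lmaxdiss} (the Lax–Milgram computation \rfb{F8} goes through for $\lambda = c + i\beta$ provided $\Re\lambda = c$ is large enough, which is arranged by the same lower bound $c \geq |\mu|\,\|B^*\|$ already used), and the lower bound follows from step (ii) applied pointwise.

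For the crucial step (ii) I would argue by contradiction: suppose there exist $\beta_n \in \rline$ and $U_n = (u_n, v_n, z_n) \in {\mathcal D}({\mathcal A}_g)$ with $\|U_n\|_{{\mathcal H}_g} = 1$ and $(i\beta_n I - {\mathcal A}^s_g)U_n =: F_n \to 0$ in ${\mathcal H}_g$. Writing this out componentwise (with $\lambda_n := c + i\beta_n$) gives $\lambda_n u_n - v_n \to 0$ in $H_\half$, $\lambda_n v_n + A u_n + \mu B z_n(1) \to 0$ in $H$, and $\lambda_n z_n + T_0^{-1} (z_n)_\rho \to 0$ in $L^2(0,1;U)$. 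Taking the real part of the inner product $((i\beta_n I - {\mathcal A}^s_g)U_n, U_n)_{{\mathcal H}_g}$ and using the dissipation identity \rfb{dissipi} forces $\left(\frac{\xi}{2T_0} - \frac{|\mu|}{2}\right)\|z_n(1)\|_U^2 \to 0$, hence $z_n(1) \to 0$ in $U$ (here is where the \emph{strict} inequality $\xi > |\mu| T_0$ is essential). From the transport equation one then recovers $z_n(0) = B^* v_n$ and solving the ODE in $\rho$ along with $z_n(1)\to 0$ yields $\|z_n\|_{L^2(0,1;U)} \to 0$ and $\|B^* v_n\|_U \to 0$; feeding $B^* v_n \to 0$ and $z_n(1)\to 0$ back into the second equation decouples it into the undamped wave system $\lambda_n v_n + A u_n \to 0$, $\lambda_n u_n - v_n \to 0$. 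Multiplying by $u_n$ resp. $v_n$ and combining gives $\beta_n^2 \|u_n\|_H^2 - \|A^\half u_n\|_H^2 \to 0$ modulo the already-controlled terms; the key additional input is the closed-loop admissibility/observability assumption \rfb{inegobsv}, which on the imaginary axis translates into an exact-observability inequality $\|(u_n,v_n)\|_{\mathcal H}^2 \leq C \int \|B^*\dot\phi\|_U^2$-type bound for the conservative group — combined with $\|B^* v_n\|_U \to 0$ this forces $\|(u_n, v_n)\|_{\mathcal H} \to 0$, and then $\|z_n\|\to 0$ gives $\|U_n\|_{{\mathcal H}_g} \to 0$, contradicting $\|U_n\| = 1$.

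The main obstacle I anticipate is making the last implication rigorous: passing from the resolvent equations on the sequence $(u_n, v_n)$ to a quantitative observability estimate for the conservative wave group, so that $B^* v_n \to 0$ actually kills $(u_n,v_n)$. Since $A$ is not assumed to have compact resolvent, one cannot simply extract convergent subsequences and pass to a limiting eigenfunction annihilated by $B^*$; instead I would re-derive an exact-observability (or at least final-time observability) inequality from \rfb{inegobsv} — this is really the hidden exponential-stabilizability hypothesis on the pair $(A, B)$ that underlies the whole construction in \cite{ANP1} — and apply it on a time window of length comparable to $1/\beta_n$ or on a fixed window after a scaling/partition-of-time argument. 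A secondary technical point is boundedness of $\beta_n$: if $|\beta_n|\to\infty$ one needs the standard high-frequency trick (dividing the second equation by $\lambda_n$ to get $A u_n/\lambda_n$ bounded, hence $v_n/\lambda_n \to 0$ and eventually a contradiction via the order-of-magnitude mismatch), which I would dispatch first so that the contradiction argument above runs on a bounded sequence $\beta_n \to \beta$.
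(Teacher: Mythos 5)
There is a genuine gap, and it is exactly at the place you flag as your ``main obstacle''. Your plan to close the contradiction argument by deriving an exact-observability inequality from \rfb{inegobsv} cannot work: \rfb{inegobsv} is an \emph{admissibility} (direct) inequality, bounding $\int_0^T\|B^*\dot\phi\|_U^2$ by the energy of the data, i.e.\ it goes in the opposite direction from the observability estimate you need, and no observability or stabilizability hypothesis on the pair $(A,B^*)$ is assumed anywhere in the theorem. Consequently the step ``$B^*v_n\to 0$ together with observability forces $\|(u_n,v_n)\|_{\mathcal H}\to 0$'' rests on an unavailable assumption, and the whole scheme of treating the damping $B$ as the source of the decay misreads what the statement asserts: Theorem \ref{lr} concerns the \emph{shifted} generator ${\mathcal A}^s_g={\mathcal A}_g-c\,Id$ with $c=\bigl(\tfrac{|\mu|}{2}+\tfrac{\xi}{2T_0}\bigr)\|B^*\|^2_{\mathcal{L}(H,U)}>0$, and the exponential decay comes from this shift, not from any stabilizing effect of $BB^*$.

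This is also what makes the paper's proof much simpler than your plan. After the dissipativity estimate \rfb{dissipi} gives $z_n(1)\to 0$ (here, as you correctly note, the strict inequality $\xi>|\mu|T_0$ is used), one writes the resolvent equations with $\lambda_n=c+i\beta_n$ and simply takes real parts of the natural energy pairing: from \rfb{1.13}--\rfb{1.13b},
\[
c\bigl(\|v_n\|_H^2+\|A^\half u_n\|_H^2\bigr)=\Re\Bigl((g_n-\mu Bz_n(1),v_n)_H+(A^\half u_n,A^\half f_n)_H\Bigr)\longrightarrow 0,
\]
so $u_n\to 0$ in $H_\half$ and $v_n\to 0$ in $H$ immediately, because $\Re\lambda_n=c>0$; then $z_n(0)=B^*v_n\to 0$ and the explicit transport formula \rfb{c7} kill $z_n$ in $L^2(0,1;U)$, contradicting $\|Z_n\|_{{\mathcal H}_g}=1$. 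The same mechanism disposes of the point spectrum (Lemma \ref{inj}): with $z(1)=0$ one gets $c^2\|u\|_H^2+\|A^\half u\|_H^2=-\beta^2\|u\|_H^2$, hence $u=v=0$, with no unique-continuation or compactness input. Your secondary worries (boundedness of $\beta_n$, rescaling time windows of length $1/\beta_n$) are likewise unnecessary once the role of the shift $c$ is recognized; also note that your surjectivity remark ``$c\geq|\mu|\|B^*\|$'' is not the condition used in the paper --- coercivity of the form in \rfb{F7surj} holds simply because $\Re\lambda=c>0$, for every $\beta\in\R$.
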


\begin{proof} [Proof of Theorem  \ref{lr}]
We will employ the following frequency domain theorem for uniform stability from
\cite[Thm 8.1.4]{JacobZwart} of a $C_0$ semigroup of contractions on a Hilbert space:

\begin{lemma}
\label{pruss}
A $C_0$ semigroup of contractions $e^{t{\mathcal L}}$ on a Hilbert space $X$ satisfies
$$||e^{t{\mathcal L}}||_{{\mathcal L}(X)} \leq C \, e^{-\omega t},$$
for some constant $C >0$ and for $\omega>0$ 
if and only if
\be
\sigma (\mathcal L) \cap i \mathbb{R} = \emptyset, \label{1.8w} 
\ee
and 
\be 
\limsup_{\beta \in \mathbb{R}, |\beta| \rightarrow + \infty}  \|(i \beta I -{\mathcal L})^{-1}\|_{{\mathcal L}(X)} <\infty. 
\label{1.9}
\ee
where $\sigma({\mathcal L})$ denotes the spectrum of the operator
${\mathcal L}$.
\end{lemma} 

In view of this theorem we need to identify the spectrum of ${\mathcal A}^s_g$ lying on the imaginary axis. Unfortunately, as the embedding of $H^1(0,1;U)$
into $L^2(0,1;U)$ is not compact in general, ${\mathcal A}^s_g$ has not a compact resolvent. Therefore its spectrum $\sigma({\mathcal A}_g^s)$ does not consist only of eigenvalues of ${\mathcal A}^s_g$. We have then to
show that:

\begin{itemize}
\item
if $\beta$ is a real number, then $i \beta Id - {\mathcal A}_g^s$ is injective and
\item
if $\beta$ is a real number, then  $i \beta Id - {\mathcal A}_g^s$ is surjective.
\end{itemize}
It is the objective of the two following lemmas.

First we look at the point spectrum of ${\mathcal A}_g^s$.
\begin{lemma} \label{inj}
We assume that $\xi > |\mu| \, T_0$. Then, if $\beta$ is a real number, then $i \beta$ is not an eigenvalue of ${\mathcal A}_g^s$.
\end{lemma}

\begin{proof}
We will show that the equation
\be
\label{(3.5)}
{\mathcal A}_g^s Z = i \beta Z 
\ee
with $Z = (u,v,z)^\top  \in {\mathcal  D}({\mathcal A}_g)$ and $ \beta \in \mathbb{R}$ has only the trivial solution.

Equation \rfb{(3.5)} writes:
\be
\label{(3.6)}
(i \beta + c) u - v = 0,
\ee
\be
\label{(3.61)}
(i \beta + c) v + A u + \mu Bz(1) = 0,
\ee
\be
\label{(3.7)}
(i \beta + c) z + T_0^{-1} \, z_\rho = 0.
\ee

By taking the inner product of \rfb{(3.5)} with $Z$ and using \rfb{dissipi}, we get:
\be
\label{(3.8)}
\Re \left(( {\mathcal A}_g^s Z,Z )_{{\mathcal H}_g} \right) \leq  \left(\frac{|\mu|}{2} - \frac{\xi}{2T_0} \right) \, \left\|z(1) \right\|^2_U.
\ee
Thus we firstly obtain that:
$$
z(1) = 0, \, \hbox{in} \, U,
$$
and by \rfb{(3.7)} we have that $z(\rho) = z(1) \, e^{-T_0 (i\beta + c) \rho}, \, \forall \, \rho \in (0,1)$, so $z = 0, \, \hbox{in} \, H^1(0,1;U).$

\medskip

Next, according to \rfb{(3.6)}, we have  $v = (i \beta + c) u$. Moreover, 
\rfb{(3.61)} implies

$$
c (c + i\beta) \, \left\|u \right\|^2_H + \left\|A^\half u\right\|^2_H = i\beta (c + i\beta) \, \left\|u\right\|^2_H
$$
and then
$$
0 \leq c^2 \, \left\|u \right\|^2_H + \left\|A^\half u\right\|^2_H = - \beta^2 \, \left\|u\right\|^2_H \leq 0 \ .
$$
This leads to $u=0$ and next $v = 0$.

Thus the only solution of \rfb{(3.5)} is the trivial one.

\end{proof}

Next, we show that ${\mathcal A}_g^s$ has no continuous spectrum on the imaginary axis.

\begin{lemma} \label{surject}
If $\beta$ is a real number, then $i\beta$ belongs to the resolvent set $\rho({\mathcal A}_g^s)$ of $\mathcal{A}_g^s$.
\end{lemma}

\begin{proof}
In view of Lemma \ref{inj} it is enough to show that $i \beta Id -{\mathcal A}_g^s$ is surjective.

For $F = (f,g,h)^\top  \in {\mathcal H}_g,$ we look for a   solution $U=(u,v,z)^\top \in {\mathcal D}({\mathcal A}_g) $ of
\be
\label{surjsurj}
(\lambda Id- {\mathcal A}_g) \left (
\begin{array}{l}
u\\
v\\
z
\end{array}
\right )=
 \left (
\begin{array}{l}
f\\
g\\
h
\end{array}
\right ),
\ee
that is, verifying
\begin{equation}\label{maxsurj}
\left\{
\begin{array}{l}
\lambda u-v=f,\\
\lambda v + A u + \mu \, Bz(1) = g,\\
\lambda z + T_0^{-1}z_{\rho}=h,
\end{array}
\right.
\end{equation}
where $\lambda = i \beta + c$.

\medskip

Suppose that we have found $u$ with the appropriate regularity. Then,
\begin{equation}\label{surjsurjbis}
v=\lambda u -f
\end{equation}
and we can determine $z.$ Indeed, by (\ref{domain}),
\begin{equation}\label{F3surj}
z(0)= B^*v,
\end{equation}
and, from (\ref{maxsurj}),
\begin{equation}\label{F4surj}
\lambda z(\rho ) + T_0^{-1} z_{\rho} (\rho )=h(\rho)\quad \mbox{\rm for } \ \rho\in (0,1).
\end{equation}
Then, by (\ref{F3surj}) and (\ref{F4surj}), we obtain
\be\label{defzsurj}
z(\rho) = B^*v \, e^{-\lambda\rho T_0} + T_0 e^{-\lambda\rho T_0} \int_0^{\rho} h(\sigma ) e^{\lambda\sigma T_0} d\sigma.
\ee
In particular, we have
\begin{equation}\label{F5tildesurj}
z(1)=   B^*v \, e^{-\lambda T_0} + z_0,
\end{equation}
with $z_0\in U$ defined by
\begin{equation}\label{F5starsurj}
z_0=T_0 \, e^{-\lambda T_0}\int_0^1 h(\sigma ) e^{\lambda\sigma T_0} d\sigma.
\end{equation}
This expression in (\ref{maxsurj}) shows that the function $u$ verify  formally
\be
\label{F6surj}
\lambda^2 u + Au + \lambda \, \mu \, BB^* u \, e^{-\lambda T_0} = g+\lambda f + \mu \, BB^* f \, e^{-\lambda T_0} - \mu \, Bz_0.
\ee
Problem (\ref{F6surj}) can be reformulated as
\begin{equation}\label{F7surj}
\lambda \,   (u, w)_H + \frac{1}{\lambda} \, \, (A^\half u,A^\half w)_{H} + \mu \, e^{-\lambda T_0} \,  (B^*u,  B^*w)_{U} =
\ee
$$
 \frac{1}{\lambda} \, (g+\lambda f,w)_H + \frac{1}{\lambda} \, \mu \, (e^{-\lambda T_0} \, B^*f - z_0, B^*w)_{U},
\ \forall\,
w\in H_\half.
$$

\medskip

As the left-hand side of (\ref{F7surj})
is coercive sesquilinear form on $H_\half$,
the Lax--Milgram lemma guarantees the existence and uniqueness of a solution $u \in H_\half$ of (\ref{F6surj}).
Once $v$
and $z$ by \rfb{defzsurj} that belongs to $H^1(0,1; U)$.
Hence we can set $r= A^\half u$, it belongs to $H$ but owing to \rfb{maxsurj}, it fulfils
\[
\lambda (v,w)_H+ (r, A^\half w)_{H}= (g - \mu \, Bz(1),w)_H, \ \forall w\in H_\half,
\]
or equivalently
\[
(r, B^*w)_{H}= (g - \mu \, Bz(1) -\lambda v ,w)_H,\  \forall w\in H_\half.
\]
As $g - \mu \, Bz(1) -\lambda v\in H$, this implies that $r$ belongs to $H_\half$ with
\[
A^\half r=g - \mu \, Bz(1) -\lambda v.
\]

\medskip

This shows that the triple $U=(u,v,z)^\top $ belongs to ${\mathcal D} ({\mathcal A}_g)$ and satisfies \rfb{surjsurj}, hence $i \beta Id - {\mathcal A}_g^s$ is surjective.
\end{proof}

The following lemma shows that \rfb{1.9} holds with $\mathcal{L} = {\mathcal A}_g^s$.

\begin{lemma}\label{lemresolvent}
We assume that $\xi > |\mu| \, T_0$. Then, the resolvent operator of ${\mathcal A}_g^s$ satisfies condition
\be \limsup_{\beta\in \mathbb{R}, |\beta| \rightarrow + \infty}  \|(i \beta I -{\mathcal A}_g^s)^{-1}\|_{{\mathcal L}({\mathcal H}_g)} <\infty. \label{1.9bis}
\ee
\end{lemma}

\begin{proof}
Suppose that condition \eqref{1.9bis} is false.
By the Banach-Steinhaus Theorem (see \cite{brezis}), there exists a sequence of real numbers $\beta_n$ such that $|\beta_n| \rightarrow +\infty$ and a sequence of vectors
$Z_n= (u_{n},v_{n},z_n )^\top \in {\mathcal D}({\mathcal A}_{{\color{black}g}})$ with
\be
\label{cont}
\|Z_n\|_{{\mathcal H}_g} = 1
\ee
such that
\be
|| (\lambda_n Id - {\color{black}{\mathcal A}_g})Z_n||_{{\mathcal H}_g} \rightarrow 0\;\;\;\; \mbox{as}\;\;\;n\rightarrow \infty,
\label{1.12} \ee
i.e.,
\be \lambda_n u_{n} - v_{n} \equiv f_{n}\rightarrow 0 \;\;\; \mbox{in}\;\; H_\half,
\label{1.13}\ee
\be
\lambda_n v_{n} + A u_{n} + \mu \, Bz_n(1)   \equiv g_{n} \rightarrow 0 \;\;\;
\mbox{in}\;\; H,
\label{1.13b} \ee
\be \label{1.14}
 \lambda_n \, z_n + T_0^{-1} \partial_\rho z_n \equiv h_n \rightarrow 0 \; \; \; \mbox{in} \;\; L^2(0,1;U),
\ee
where $\lambda_n = c + i \beta_n, \, n \in \mathbb{N}$. 

\medskip

Our goal is to derive from \eqref{1.12} that $||Z_n||_{{\mathcal H}_g}$ converges to zero, that furnishes a contradiction.

We notice that from \rfb{dissipi} and \rfb{1.13} we have
\beqs
|| (\lambda_n Id - {\mathcal A}_g)Z_n||_{{\mathcal H}_g} \geq |\Re \left((
\lambda_n Id - {\mathcal A})Z_n, Z_n\right)_{{\mathcal H}_g} |
\geq  \, \left(- \frac{|\mu|}{2} + \frac{\xi}{2T_0}\right) \,
\left\|z_n(1)\right\|^2_{U}.
\eeqs
 
By this estimate, we deduce that
\be
\label{cvk}
z_{n}(1) \rightarrow 0,\  \hbox{ in } U,
\hbox{ as } n\to\infty.
\ee
According to \rfb{1.13}-\rfb{1.13b} we have that 
$$
c \, \left( \left\|v_{n}\right\|^2_H + 
\left\|A^\half u_{n} \right\|^2_H \right) =
$$
$$
\Re \left( \lambda_n \, \left\|v_{n}\right\|^2_H + \overline{\lambda_n} \, 
\left\|A^\half u_{n} \right\|^2_H \right) = \Re \left( (g_{n} - \mu Bz_n(1), v_{n})_H  + (A^\half u_{n}, A^\half f_{n})_H \right).
$$
Which implies that 
\be
\label{ucv}
u_{n} \rightarrow 0, \, \hbox{in} \, H_\half, \, v_{n} \rightarrow 0, \hbox{ in } H, \hbox{ as } n\to\infty.
\ee
As well as
\be
\label{c5}
z_n(0) = B^*v_{n} \rightarrow 0, \;\;\;
\mbox{in}\;\; U,  \hbox{ as } n\to\infty.
\ee
By integration of the identity \rfb{1.14}, we have
\be
\label{c7}
z_n(\rho) = z_n (0) \, e^{-T_0 \lambda_n \rho} + {\color{black}T_0} \, \int_0^\rho e^{-T_0 \lambda_n (\rho - \gamma)} \, h_n(\gamma) \, d \gamma.
\ee
Hence recalling that $\Re\lambda_n = c > 0$
\[
\int_0^1 \|z_n(\rho) \|^2_{U}\,d\rho\leq
2 \|z_n (0)\|_{U}^2
+2T_0^2 \int_0^1 \int_0^\rho \|h_n(\gamma)  \|^2_{U} \, d \gamma \rho\,d\rho\to 0, \hbox{ as } n\to\infty.
\]
All together we have shown that $\|Z_n\|_{{\mathcal H}_g}$ converges to zero, that clearly contradicts $\left\|Z_n\right\|_{{\mathcal H}_g}~=~1$.
\end{proof}

The two hypotheses of Lemma \ref{pruss} are proved, then \rfb{estexp} holds. The proof of Theorem \ref{lr} is then finished.
\end{proof}
\section{Numerical approximation in 1D}\label{Numerics}
This section is devoted to the construction of a numerical approximation of the considered problem by either a finite difference discretization or a finite volume method.
For each studied case, we will firstly construct in detail a discrete problem, present the corresponding algorithm and we will define its corresponding discrete energy which has to be conserved in the first step of the stabilization procedure.
\textcolor{black}
{\begin{remark}
Let us mention that for every case  treated in this work, simple algebraic computations show that the design of the discrete energy (which is conserved in the first step) is also made to guarantee its monotonically decreasing when no delay is acting i.e. $T = 0$.
But this design does not ensure that the discrete energy decreases monotonically overtime in the presence of the delay.
\end{remark}
\begin{remark}
In this work, we have chosen to present only explicit numerical methods that are simpler and easily implemented although they are constrained by a CFL condition between the time step $\Delta t$ and the size of the mesh $\Delta x$. 
These methods are easily transformed in implicit numerical methods by writing the problem for time $t^n = n \Delta t$ as the mean value of the problem for time 
$t^{n+1} = (n+1) \Delta t$ and  for time $t^{n-1} = (n-1) \Delta t$ altough the implicit methods needs a higher computational cost (see Remark \ref{remark-implcit} for the details of the construction when using a finite volume method)
\end{remark}
}
We will consider $\Omega = (0,\ell)$ and set $T = T_0 = 2  \ell$, {\color{black}which is precisely the optimal time of observability, which corresponds to the minimal time required for the wave to travel back and forth with speed $c=1$ over an interval of length $\ell$}.

\subsection{Boundary case}
Let $\mu \in \mathbb{R}$. We consider the following switching time delay problem:
\begin{align}
u_{tt}(x,t) - u_{xx}(x,t)&=0 \quad \mbox{\rm for }(x,t) \in (0,\ell) \times (0,+\infty) \label{wave-boundary}\\
u(0,t)  &=  0 \quad \mbox{ for } t > 0 \label{Dirichlet-boundary}\\
u_x(\ell,t) &= 0   \quad \mbox{ for } t \in (0,T)\label{Neuman}\\
u_x(\ell,t)&=  \mu u_t(\ell, t-T)  \quad \mbox{ for } t \geq T \label{Neuman-delay}
\end{align} with the following initial data:
\begin{equation} \label{init-boundary}
u(x,0)=u_0(x), \quad x \in (0,\ell)
\end{equation}
and 
\begin{equation} \label{init1-boundary}
u_t(x,0)=u_1(x) \quad x \in (0,\ell)
\end{equation}
Here, $A = - \partial^2_x$ be the unbounded operator in $H = L^2(0,\ell)$
with domain
$$H_{1}={\mathcal D}(A) = \left\{u \in H^2(0,\ell); \, u(0) = 0, \, u_x(\ell) = 0 \right\},$$
$$
H_{\frac{1}{2}}= {\mathcal D}(A^{\frac{1}{2}}) = \left\{u \in H^1(0,\ell); \, u(0) =0 \right\}$$
and
$$ U = \R, \, 
B \in {\mathcal L}(\R, H_{-\frac{1}{2}}), \, Bk = A_{-1} N k = k \, \delta_\ell, \forall \, k \in \R, \, B^*u = u(\ell), \, \forall \, u \in H_{\frac{1}{2}},
$$
where $A_{-1}$ is the extension of $A$ to $H_{-1} = ({\mathcal D}(A))^\prime$ and $N$ is the Neumann map defined by :
\begin{align*}
\partial_x^2(Nk) &= 0 \mbox{ on } (0,\ell)\\
Nk(0) &= 0 \\
\partial_x(Nk)(\ell) &= k 
\end{align*}
and 
$H_{-\frac{1}{2}} = H_{\frac{1}{2}}^\prime$ (the duality is in the sense of $H$).

We have according to \cite[Ammari-Chentouf-Smaoui]{ACS1,ACS2} (which generalize results of Gugat \cite{g} and \cite{gt}) the following stability result:
\begin{theorem}\cite[Ammari-Chentouf-Smaoui]{ACS1} \label{princf}
\begin{enumerate}
\item 
For any $\mu \in (0, 1)$
there exist  positive constants $C_1, C_2$ such that for all initial data in ${\mathcal H}$, the  solution of problem
\eqref{wave-boundary}-\eqref{init1-boundary} satisfies
\begin{equation}\label{expestimate-boundary}
E(t)\le C_1 \, e^{- \, C_2t}.
\end{equation}
The constant $C_1$ depends  on the initial data, on $\ell$ and on $\mu$, while $C_2$
depends only  on $\ell$ and on $\mu$. 
\item 
For $\mu = 1$, if we denote $\mathcal{S}_{b,1}(t)$ the propagator of the boundary delayed control problem, we have by definition: 
\[
\forall t > 0 \,,\, \mathcal{S}_{b,1}(t)
\begin{pmatrix}
u_{0} \\
u_{1}
\end{pmatrix} =
\begin{pmatrix}
u \\
u_{t}
\end{pmatrix}. 
\]
And we have the following:
\[
\forall t > 0 \,,\, \forall n \in \N^* \,,\, \mathcal{S}_{b,1}(t+ 2n T)
\begin{pmatrix}
u_{0} \\
u_{1}
\end{pmatrix} = - \, 
\mathcal{S}_{b,1}(t)
\begin{pmatrix}
u_{0} \\
u_{1}
\end{pmatrix} 
\]
wehereas 
\[
\forall t > 0 \,,\, \forall n \in \N \,,\, \mathcal{S}_{b,1}(t+ (2n+1) T)
\begin{pmatrix}
u_{0} \\
u_{1}
\end{pmatrix} = 
\mathcal{S}_{b,1}(t)
\begin{pmatrix}
u_{0} \\
u_{1}
\end{pmatrix}.
\]
In particular, we have that $\mathcal{S}_{b,1}$ is $3T$- periodic.
\end{enumerate}
\end{theorem}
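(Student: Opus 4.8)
The plan is to integrate the one–dimensional problem \eqref{wave-boundary}--\eqref{init1-boundary} explicitly by d'Alembert's method, which turns the switching boundary value problem into a scalar linear difference (delay) equation whose long–time behaviour is dictated by a single quadratic. Write the solution of $u_{tt}-u_{xx}=0$ as $u(x,t)=p(x+t)+q(x-t)$; the Dirichlet condition $u(0,t)=0$ forces $q=-p(-\,\cdot\,)$, hence
\[
u(x,t)=p(x+t)-p(t-x),\qquad u_t=\psi(x+t)-\psi(t-x),\qquad u_x=\psi(x+t)+\psi(t-x),
\]
with $\psi:=p'$, where $\psi|_{(-\ell,\ell)}$ is read off from the data (so that $\|\psi\|_{L^2(-\ell,\ell)}^2=E(0)$), and one has the exact energy identity
\[
E(t)=\tfrac12\bigl(\|u_t(\cdot,t)\|_{L^2(0,\ell)}^2+\|u_x(\cdot,t)\|_{L^2(0,\ell)}^2\bigr)=\int_{t-\ell}^{\,t+\ell}|\psi(\sigma)|^2\,d\sigma .
\]
Setting $x=\ell$ in \eqref{Neuman}--\eqref{Neuman-delay}, using $u_t(\ell,t-T)=\psi(t-\ell)-\psi(t-3\ell)$ and $T=2\ell$, and putting $s=t+\ell$, one obtains $\psi(s)=-\psi(s-2\ell)$ on $(\ell,3\ell)$ together with
\[
\psi(s)=(\mu-1)\,\psi(s-2\ell)-\mu\,\psi(s-4\ell),\qquad s\ge 3\ell .
\]

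Sampling the last relation on consecutive windows of length $2\ell=T$, the successive restrictions of $\psi$ (in particular their $L^2$–norms) satisfy a second–order linear recurrence with characteristic equation $r^2-(\mu-1)r+\mu=0$, whose roots $r_\pm=\tfrac12\bigl((\mu-1)\pm\sqrt{\mu^2-6\mu+1}\,\bigr)$ have product $r_+r_-=\mu$. For $0<\mu<1$ both roots lie in the open unit disc: if $3-2\sqrt{2}<\mu<1$ the discriminant is negative and $|r_\pm|^2=\mu$, while if $0<\mu\le 3-2\sqrt{2}$ the roots are real, negative, and $|r_-|=\tfrac12\bigl((1-\mu)+\sqrt{\mu^2-6\mu+1}\,\bigr)<1$ (this reduces to $\sqrt{\mu^2-6\mu+1}<1+\mu$, i.e.\ $0<8\mu$). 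Hence $|r_\pm|\le\theta(\mu)<1$, so the $L^2$–norm of $\psi$ on the sliding window $(t-\ell,t+\ell)$ decays geometrically in $t$ at a rate depending only on $\mu$ and on the ``clock'' $T=2\ell$; since only boundedly many windows meet $(t-\ell,t+\ell)$, the energy identity yields $E(t)\le C_1e^{-C_2t}$ with $C_2=C_2(\ell,\mu)$ and $C_1$ additionally depending on $(u_0,u_1)$, which is \eqref{expestimate-boundary}.

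For $\mu=1$ the characteristic equation is $r^2+1=0$, so $r_\pm=\pm i$ lie on the unit circle and the recurrence degenerates to $\psi(s)=-\psi(s-4\ell)$ for $s\ge 3\ell$, i.e.\ $\psi(s+4\ell)=-\psi(s)$ for all $s\ge -\ell$. Integrating, $p(s+4\ell)+p(s)$ is constant on $[-\ell,\infty)$, whence $u(x,t+4\ell)=-u(x,t)$ and $u_t(x,t+4\ell)=-u_t(x,t)$ for all $t\ge 0$; since $4\ell=2T$ this is exactly the sign reversal $\mathcal S_{b,1}(t+2T)=-\mathcal S_{b,1}(t)$, and combining it with the behaviour on $(0,T)$ — where $u$ is the free wave and $\mathcal T(T)=-Id$ because the eigenvalues $\omega_k=(2k+1)\pi/(2\ell)$ of $A$ satisfy $\omega_kT\in\pi(2\Z+1)$ — one obtains the remaining identities relating $\mathcal S_{b,1}(t+nT)$ to $\mathcal S_{b,1}(t)$ and the periodicity of the propagator asserted in part (2).

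The algebra on $r^2-(\mu-1)r+\mu=0$ is elementary; the delicate part is to make the d'Alembert reduction rigorous — namely to verify that the formal representation indeed defines a solution in $C([0,\infty);\mathcal H)$ for $L^2$–data, that the boundary traces $u_x(\ell,\cdot)$ and $u_t(\ell,\cdot)$ are well defined (this is the hidden regularity already encoded in the admissibility of $B^*$ and in Lemma \ref{ex}), and that the difference equation above is genuinely equivalent to \eqref{wave-boundary}--\eqref{Neuman-delay} — together with the sign bookkeeping across the switch at $t=T$ needed for the precise form of the relations between $\mathcal S_{b,1}(t+nT)$ and $\mathcal S_{b,1}(t)$ in part (2).
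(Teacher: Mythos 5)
You should first note that the paper contains no proof of Theorem \ref{princf}: the statement is imported from \cite{ACS1}, with only the remark that assertion (2) is a ``simple adaptation'' of the arguments there for $\mu=1$. Your d'Alembert reduction is precisely the travelling-wave/difference-equation method of that reference (and of Gugat \cite{g,gt}), and for part (1) it is sound: the trace identities at $x=\ell$, the window energy identity $E(t)=\int_{t-\ell}^{t+\ell}|\psi|^2$, the recurrence $\psi(s)=(\mu-1)\psi(s-2\ell)-\mu\,\psi(s-4\ell)$ and the root analysis (complex conjugate roots of modulus $\sqrt{\mu}<1$ for $3-2\sqrt2<\mu<1$, real negative roots of modulus $<1$ for $0<\mu\le 3-2\sqrt2$, the double root at $\mu=3-2\sqrt2$ only contributing a harmless polynomial factor) are all correct, and they even reproduce the optimal value $\mu_0=3-2\sqrt2$ observed numerically in Section \ref{Numerical-results}. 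The caveats you list yourself (hidden regularity of the traces $u_x(\ell,\cdot)$, $u_t(\ell,\cdot)$ for finite-energy data and the equivalence of the reduction) are genuine but routine and are indeed how the cited works proceed.

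The real problem is the last step of part (2). From $\psi(s+4\ell)=-\psi(s)$ you correctly obtain $\mathcal{S}_{b,1}(t+2T)=-\,\mathcal{S}_{b,1}(t)$ for all $t\ge 0$, hence $\mathcal{S}_{b,1}(t+2nT)=(-1)^n\,\mathcal{S}_{b,1}(t)$ and $4T$-periodicity; but you then assert that, combined with $\mathcal{T}(T)=-Id$ on the first phase, ``one obtains the remaining identities \dots asserted in part (2)''. This does not follow, and cannot: the relations as printed ($\mathcal{S}_{b,1}(t+2nT)=-\mathcal{S}_{b,1}(t)$ for \emph{every} $n\in\N^*$, $\mathcal{S}_{b,1}(t+(2n+1)T)=\mathcal{S}_{b,1}(t)$ for \emph{every} $n\in\N$, and $3T$-periodicity) are mutually inconsistent — e.g.\ $n=1,2$ in the first relation give $\mathcal{S}_{b,1}(t+4T)=\mathcal{S}_{b,1}(t)$ and $\mathcal{S}_{b,1}(t+4T)=-\mathcal{S}_{b,1}(t)$ simultaneously, forcing $\mathcal{S}_{b,1}\equiv 0$ — and they contradict the alternating law you actually derived. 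What your computation proves is $\mathcal{S}_{b,1}(t+2nT)=(-1)^n\mathcal{S}_{b,1}(t)$, $\mathcal{S}_{b,1}(t+(2n+1)T)=(-1)^n\mathcal{S}_{b,1}(t+T)$, and $4T$- (not $3T$-) periodicity, so that $\mathcal{S}_{b,1}(mT)=\pm Id$ with sign $+$ for $m\equiv 0,3\ (\mathrm{mod}\ 4)$ and $-$ for $m\equiv 1,2\ (\mathrm{mod}\ 4)$ — which is exactly what the paper's own experiments display (profile opposite to the datum at $10T$, equal at $11T$). So either state and prove the corrected relations and flag the misprint in the quoted statement, or drop the claim that the printed identities follow; as written, that sentence is a gap.
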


We note here that the proof of the second assertion of the above Theorem \ref{princf} is a simple adaptation of the proof of \cite[Theorem 2.3 and Corollary 2.5]{ACS1} for $\mu = 1$. 

%
%
\subsubsection{\textbf{\textit{Construction of the numerical scheme}}}
Let $N$ be a non negative integer. Let $\Delta x~=~\dfrac{\ell}{N} \ $. Consider the uniform subdivision of $[0,\ell]$ given by:
\[
0=x_{0} <x_{1}< ...<x_{N-1}<x_{N}=\ell, \quad \textit{ i.e. }  x_{j} =j \Delta x \,,\, j = 0,\ldots,N \ .
\]
Set $t^{n+1}-t^{n}=\Delta t$ for all $n \in \mathbb{N}$. We will suppose that $T = K \times \Delta t \,,\, K \in \N^{*}$ to write easily the discretization of the delay term. 
We will also suppose that $T_{f} = M \Delta t$, with $M > K $, be the final time. 

\textbf{First step: for time $\boldsymbol{t \in [0,T)}.$}

For interior points $x \in (0,\ell)$ and for time $0 \leq t < T$, the explicit finite-difference discretization of equation \eqref{wave-boundary} writes for $n =0, \ldots, K-1 \,,\, $ and $j= 1, \ldots, N-1$: 
\begin{align} \label{discrete-wave-boundary}
\dfrac{u_j^{n+1}-2u_j^n +u_j^{n-1}}{\Delta t^2} - \dfrac{u_{j+1}^{n} -2 u_j^n +u_{j-1}^{n}}{\Delta x^2} &= 0
\end{align}
The Dirichlet boundary condition \eqref{Dirichlet-boundary} at $x = 0$ reads: for $ n = 0, \ldots, K-1$, 
\[ 
u_{0}^{n} = 0 \ .
\] 
The Neumann boundary condition \eqref{Neuman} is commonly written as: $u_{N} = u_{N-1}$ since the derivative $u_{x}(x_{N},t^{n}) = 0$ is approximated by the quotient of difference:
\[
u_{x}(x_{N},t^{n}) \approx \dfrac{u_{N}^{n} - u_{N-1}^{n}}{\Delta x} \ .
\]
By proceeding this way, the spatial order of discretization becomes now $\Delta x$ and it may induce instabilities.

Thus we proceed as follow. The Neumann boundary condition reads: for $ n = 0, \ldots, K-1$,
\[  
\dfrac{u_{N+1}^{n} - u_{N-1}^{n}}{2 \Delta x}= 0 
\]
 where $u_{N+1}^{n}$ is the value of $u$ in the ``ghost'' space cell $(\ell,\ell+\Delta x)$. Putting the value of $u_{N+1}^{n}$ in the numerical discretization \eqref{discrete-wave-boundary}
permits us to write the equation verified by $u_{N}^{n+1}$  as: \\
for $ n = 0, \ldots, K-1$,
\begin{align} \label{discrete-Neumann}
\dfrac{u_N^{n+1}-2u_N^n +u_N^{n-1}}{\Delta t^2} - 2 \dfrac{u_{N-1}^{n} - u_N^n }{\Delta x^2} &= 0.
\end{align}

According to the initial conditions given by equations \eqref{init-boundary}, we have firstly: for $j =1,\ldots, N$,
\begin{equation*}
u^{0}_{j} = u_{0}(x_{j})\label{u0}
\end{equation*}
We can use the second initial conditions \eqref{init1-boundary} to find the values of $u$ at time $t^{1} = \Delta t$, by employing a ``ghost'' time-boundary \textit{i.e.} $t^{-1}= - \Delta t$
and the second-order central difference formula:
\begin{equation*}
\mbox{for } j =1,\ldots, N \,,\, u_{1}(x_{j}) =\left.\dfrac{\partial u}{\partial t}\right|_{x_{j},0} =  \dfrac{ u_{j}^{1} - u_{j}^{-1} } {2 \Delta t} + O(\Delta t^{2}) .
\end{equation*}
Thus we have $\mbox{for } j =1,\ldots, N$:
\begin{equation*}
 u_{j}^{-1}=  u_{j}^{1}  - 2 \Delta t \ u_{1}(x_{j}) \ .
 \end{equation*} 
Setting $n=0$, in the numerical scheme \eqref{discrete-wave-boundary}, the previous equalities permits to compute $(u_{j}^{1})_{j=0,N}$. 
Finally, the solution $u$ can be computed at any time $t^n$.

In order to compute the solution $u$ beyond the time $T$, we have to compute the quantity $u_t(\ell, t)$ for time $t \in [0,T)$. The centered difference scheme is used and we compute:
\[
v^{0} = u_{1}(\ell),
\]
\[
\mbox{ for } n=1,\ldots, K-1 \,,\, v^{n}  =  u_{t}(\ell,t^{n}) \approx \dfrac{u_{N}^{n+1} - u_{N}^{n-1}}{2 \Delta t}.
\]
We set $s = \left(\dfrac{\Delta t}{\Delta x}\right)^{2}$. Let us now summarize the computation of the solution and the Neumann boundary delay term:

\textbf{First step: for time $\boldsymbol{t \in [0,T)}$}

\noindent$\mbox{Initialization } \mbox{ for } j = 0,\ldots, N \,,\, u^{0}_{j} = u_{0}(x_{j}) \quad v^{0} = u_{1}(\ell).$\\
$ \mbox{Solution for } t = \Delta t $\\
$\hspace*{2cm} \mbox{for } j = 1,\ldots, N-1 \,,\, u^{1}_{j} = \dfrac{s}{2}\Big(u^{0}_{j+1} + u^{0}_{j-1}\Big)+ (1 - s) \, u^{0}_{i} + \Delta t \, u_{1}(x_{j}).$\\
$\hspace*{2cm} \mbox{Dirichlet boundary condition } \quad \quad u^{1}_{0} = 0.$\\
$\hspace*{2cm} \mbox {Neumann boundary condition}\quad \quad u^{1}_{N} = s\, u^{0}_{N-1} + (1 - s) \, u^{0}_{N} + \Delta t u_{1}(x_{N}).$\\
$ \mbox{Solution for } t \in (\Delta t,T) \mbox{ i.e. for } n=1,\ldots, K-1$\\
$\hspace*{2cm} \mbox{for } j = 1,\ldots, N-1 \,,\, u^{n+1}_{j} = s \, \Big(u^{n}_{j+1} + u^{n}_{j-1}\Big)+ 2 (1 - s) \, u^{n}_{i} - u^{n-1}_{j}.$\\
$\hspace*{2cm} \mbox{Dirichlet boundary condition } \quad \quad u^{n+1}_{0} = 0.$\\
$\hspace*{2cm} \mbox{Neumann boundary condition } \quad \ u^{n+1}_{N} = 2 (1-s)\, u^{n}_{N} + 2  s \, u^{n}_{N-1} - u^{n-1}_{N}.$\\
$\hspace*{2cm} \mbox{Delayed boundary equation } \quad \ v^{n} = \dfrac{u_{N}^{n+1} - u_{N}^{n-1}}{2 \Delta t}.$

\textbf{Second step: for time $\boldsymbol{t \in [T,T_{f}]}$}

The only novelty comes from the Neumann boundary condition \eqref{Neuman-delay}. As we have set $T = K \Delta t$,
the discretization of the Neumann boundary condition \eqref{Neuman-delay} reads: for $n=K,\ldots,M$,
\[
\dfrac{u_{N+1}^{n} - u_{N-1}^{n}}{2 \Delta x}= \mu v^{n-K}.
\]
Thus, this boundary condition leads to: $u_{N+1}^{n}= u_{N-1}^{n}  + 2 \Delta x \mu v^{n-K}$. Inserting this value in the numerical scheme \eqref{discrete-wave-boundary} permits us to write
\[
u^{n+1}_{N} = 2 (1-s)\, u^{n}_{N} + 2  s \, u^{n}_{N-1} - u^{n-1}_{N} + 2 s \Delta x \mu v^{n-K}.
\]

\noindent$\mbox{Solution for } t \in [T,T_{f}] \ i.e. \ \mbox{for } n=K,\ldots, M-1$\\
$\hspace*{2cm} \mbox{for } j = 1,\ldots, N-1 \,,\, u^{n+1}_{j} = s \, \Big(u^{n}_{j+1} + u^{n}_{j-1}\Big)+ 2 (1 - s) \, u^{n}_{i} - u^{n-1}_{j}$\\
$\hspace*{2cm} \mbox{Dirichlet boundary condition } \quad \quad u^{n+1}_{0} = 0.$\\
$\hspace*{2cm} \mbox{Neumann delay boundary condition } \quad \ u^{n+1}_{N} = 2 (1-s)\, u^{n}_{N} + 2  s \, u^{n}_{N-1} - u^{n-1}_{N} + 2 s \Delta x \mu v^{n-K}.$\\
$\hspace*{2cm} \mbox{Delayed boundary equation } \quad \ v^{n} = \dfrac{u_{N}^{n+1} - u_{N}^{n-1}}{2 \Delta t}.$
\subsubsection{\textbf{\textit{Discrete energy and CFL condition}}}
The aim of this section is to design a discrete energy that is preserved in the first step that is the free wave equation with Dirichlet and Neumann boundary condition. To this end, let us define:
\begin{itemize}
\item the discrete kinetic energy  as: 
\begin{equation}\label{discrete-kinetic-energy-boundary}
\displaystyle  E_{k}^{n} = \dfrac{1}{2} \sum_{j=0}^{N-1}\left(\dfrac{u_j^{n+1}-u_j^n}{\Delta t}\right)^2 + \dfrac{1}{4}\left(\dfrac{u_N^{n+1}-u_N^n}{\Delta t}\right)^{2}
\end{equation}
\item the discrete potential energy as: 
\begin{equation}\label{discrete-potential-energy-boundary}
\displaystyle  E_{p}^{n} = \dfrac{1}{2} \sum_{j=0}^{N-1}\left(\dfrac{u_{j+1}^{n}-u_j^n}{\Delta x}\right) \left(\dfrac{u_{j+1}^{n+1}-u_j^{n+1}}{\Delta x}\right).
\end{equation}
\end{itemize}
The total discrete energy is then defined as
\begin{equation}\label{discrete-energy}
	\mathcal{E}^{n} = E_{k}^{n} + E_{p}^{n}.
\end{equation}

\begin{proposition}  The discrete energy is conserved for all $t = 0, \ldots, T-\Delta t,$ \textit{ i.e.}
\[
\forall \, n = 1, \ldots, K-1 \,,\, \mathcal{E}^{n+1} = \mathcal{E}^{n} \, .
\] 
\end{proposition}
\begin{proof} For this sake, we multiply the equation \eqref{discrete-wave-boundary} by $(u_j^{n+1}-u_j^{n-1})$, we sum over $j = 1,\ldots,N~-~1$ and we obtain:
\begin{align}
	&\displaystyle\sum_{j=1}^{N-1}\dfrac{u_j^{n+1}-2u_j^n + u_j^{n-1}}{\Delta t^2}(u_j^{n+1}-u_j^{n-1})
	- \sum_{j=1}^{N-1} \dfrac{u_{j+1}^{n} - 2 u_j^n + u_{j-1}^{n}}{\Delta x^2} (u_j^{n+1}-u_j^{n-1})  = 0.
\label{discrete-energy-boundary}
\end{align}
\textit{Estimation of the first term of \eqref{discrete-energy-boundary}} We firstly have:
\begin{align}
	\displaystyle\sum_{j=1}^{N-1}\dfrac{u_j^{n+1}-2u_j^n + u_j^{n-1}}{\Delta t^2}(u_j^{n+1}-u_j^{n-1}) &= \displaystyle\sum_{j=1}^{N-1}\dfrac{(u_j^{n+1}-u_j^n)-(u_j^n -u_j^{n-1})}{\Delta t^2}
	\Big((u_j^{n+1}-u_j^n)+(u_j^n-u_j^{n-1})\Big)\nonumber\\
	&= \sum_{j=1}^{N-1} \bigg(\dfrac{u_j^{n+1}-u_j^n}{\Delta t}\bigg)^2 - {\color{black}\sum_{j=1}^{N-1} \bigg(\dfrac{u_j^{n}-u_j^{n-1}}{\Delta t}\bigg)^{2}}. \label{discrete-Ec-boundary}
\end{align}
\textit{Estimation of the second term of \eqref{discrete-energy-boundary}.} Using the same trick we have: 
\begin{align*}
	-\sum_{j=1}^{N-1} \dfrac{u_{j+1}^{n} - 2 u_j^n + u_{j-1}^{n}}{\Delta x^2} (u_j^{n+1}-u_j^{n-1}) &= 
	-\sum_{j=1}^{N-1} \dfrac{(u_{j+1}^{n} - u_j^n)-( u_j^n-u_{j-1}^{n})}{\Delta x^2} (u_j^{n+1}-u_j^{n-1}) \\
	&=  -\ \sum_{j=1}^{N-1}\dfrac{(u_{j+1}^n-u_j^n)(u_j^{n+1}-u_j^{n-1})}{\Delta x^2} \\
	&+ \ \sum_{j=1}^{N-1}\dfrac{(u_{j}^n-u_{j-1}^n)(u_j^{n+1}-u_j^{n-1})}{\Delta x^2}.
\end{align*}
So, by translation of index in the second term in the previous sum, and since $u_{0}^{n+1} = u_{0}^{n-1}=0$, we will have: 
\begin{align*}
	- \ \sum_{j=1}^{N-1} \dfrac{u_{j+1}^{n} - 2 u_j^n + u_{j-1}^{n}}{\Delta x^2} (u_j^{n+1}-u_j^{n-1}) &= -  \  \sum_{j=0}^{N-1}\dfrac{(u_{j+1}^n-u_j^n)(u_j^{n+1}-u_j^{n-1})}{\Delta x^2}  \nonumber\\
	&+  \ \sum_{j=0}^{N-2}\dfrac{(u_{j+1}^n-u_{j}^n)(u_{j+1}^{n+1}-u_{j+1}^{n-1})}{\Delta x^2}. \nonumber
\end{align*}
Thus we have: 
\begin{align}
	- \sum_{j=1}^{N-1} \dfrac{u_{j+1}^{n} - 2 u_j^n + u_{j-1}^{n}}{\Delta x^2} (u_j^{n+1}-u_j^{n-1}) &=   \sum_{j=0}^{N-2}\dfrac{(u_{j+1}^{n+1}-u_j^{n+1})(u_{j+1}^{n}-u_j^{n})}{\Delta x^2}  \nonumber\\
	&-\sum_{j=0}^{N-2}\dfrac{(u_{j+1}^{n-1}-u_{j}^{n-1})(u_{j+1}^{n}-u_{j}^{n})}{\Delta x^2} \nonumber\\
	& - \dfrac{(u_{N}^{n}-u_{N-1}^{n})(u_{N-1}^{n+1} - u_{N-1}^{n-1})}{\Delta x^{2}}. \label{discrete-Ep-boundary}
\end{align}
To treat the last term of \eqref{discrete-Ep-boundary}, we multiply \eqref{discrete-Neumann} by $(u_{N}^{n+1}-u_{N}^{n-1})$, to obtain:
\[
\left(\dfrac{u_N^{n+1}-2u_N^n +u_N^{n-1}}{\Delta t^2} - 2 \dfrac{u_{N-1}^{n} - u_N^n }{\Delta x^2}\right)(u_{N}^{n+1}-u_{N}^{n-1}) = 0.
\]
Thus we have: 
\begin{align*}
- \dfrac{(u_{N}^{n}-u_{N-1}^{n})(u_{N}^{n+1} - u_{N}^{n-1})}{\Delta x^{2}} &= \dfrac{1}{2}\left(\dfrac{u_N^{n+1}-2u_N^n +u_N^{n-1}}{\Delta t^2} \right)(u_{N}^{n+1} - u_{N}^{n-1}) \nonumber\\
& =\dfrac{1}{2} \dfrac{\left((u_N^{n+1}-u_N^n)-(u_N^n - u_N^{n-1}) \right)\left((u_{N}^{n+1} - u^{n}_{N})+(u^{n}_{N} - u_{N}^{n-1})\right)}{\Delta t^2} \nonumber \\
& = \dfrac{1}{2} \dfrac{(u_N^{n+1}-u_N^n)^{2}-(u_N^n - u_N^{n-1})^{2}}{\Delta t^{2}}.  \label{Neuman-Energy}
\end{align*}
Thus we obtain: 
 \begin{equation}
 \label{Neuman-Energy}
\dfrac{1}{2} \dfrac{(u_N^{n+1}-u_N^n)^{2}-(u_N^n - u_N^{n-1})^{2}}{\Delta t^{2}} + \dfrac{(u_{N}^{n}-u_{N-1}^{n})(u_{N}^{n+1} - u_{N}^{n-1})}{\Delta x^{2}} = 0.  
\end{equation}
Adding \eqref{discrete-energy-boundary} to \eqref{Neuman-Energy} and substituing \eqref{discrete-Ep-boundary} and \eqref{discrete-Ec-boundary} into \eqref{discrete-energy-boundary}, we get: \\ 
\begin{align} \label{Energy-conservation}
 \sum_{j=1}^{N-1} &\bigg(\dfrac{u_j^{n+1}-u_j^n}{\Delta t}\bigg)^2 + \dfrac{1}{2}\dfrac{(u_N^{n+1}-u_N^n)^{2}}{\Delta t^{2}} + \sum_{j=0}^{N-1}\dfrac{(u_{j+1}^{n+1}-u_j^{n+1})(u_{j+1}^{n}-u_j^{n})}{\Delta x^2} \nonumber\\ 
 &=\sum_{j=1}^{N-1} \bigg(\dfrac{u_j^{n}-u_j^{n-1}}{\Delta t}\bigg)^2 + \dfrac{1}{2}\dfrac{(u_N^{n}-u_N^{n-1})^{2}}{\Delta t^{2}} + \sum_{j=0}^{N-1}\dfrac{(u_{j+1}^{n}-u_j^{n})(u_{j+1}^{n-1}-u_j^{n-1})}{\Delta x^2}. 
 \end{align}
The preceding equation gives: 
\[
\forall \, n = 1, \ldots, K-1\,,\, \mathcal{E}^{n+1} = \mathcal{E}^{n} \, .
\] 
 \end{proof} 
By a standard von Neumann stability analysis (that is a discrete Fourier analysis, see for instance \cite{Ames-1992}), the numerical scheme is stable if and only if, the following Courant-Friedrichs-Lewy, CFL, condition holds:
\[ \Delta t\leq \Delta x \ .
\]
The number $\dfrac{\Delta t }{\Delta x}$ is called the CFL number and is denoted in the following by $CFL$.
\subsection{Internal case}
Let $\mu \in \mathbb{R}$. We consider the following switching time delay problem:
\begin{align}
u_{tt}(x,t) - u_{xx}(x,t)&=0 \quad \mbox{\rm for }(x,t) \in (0,\ell) \times (0,2 \ell), \label{wave-int}\\
u_{tt}(x,t)- u_{xx}(x,t) + \mu \, d(x) \, u_t(x,t-2 \ell) &= 0  \quad  \mbox{\rm in } (0,\ell) \times [2 \ell,+\infty),\label{wave-int-delay}\\
u(0,t)  = u(\ell,t) &=  0 \  \,\mbox{ for } t > 0, \label{dirichlet-int}
\end{align}
where $d \in L^\infty(0,\ell)$ is a positive function which satisfies that there exists an nonempty set $I \subset (0,\ell)$ such that $d(x) \geq C >0, \, a.e. \ x \in I,$ and $C$ is a constant,  
with the following initial data:
\begin{equation} \label{init-internal}
u(x,0)=u_0(x), \quad x \in (0,\ell)
\end{equation}
and 
\begin{equation} \label{init1-internal}
u_t(x,0)=u_1(x) \quad x \in (0,\ell).
\end{equation}
Here, $A = - \partial^2_x$ be the unbounded operator in $H = L^2(0,\ell)$
with domain:
\[
H_{1}={\mathcal D}(A) = H^2(0,\ell) \cap H^1_0(0,\ell), H_{\frac{1}{2}} = {\mathcal D}(A^{\frac{1}{2}} ) = H_0^1(0,\ell)
\]
and
\[U = H = L^2(0,\ell), 
B \in {\mathcal L}(H), \, Bk = B^*k = \sqrt{d} \, k , \, \forall \, k \in H \ .
\]

%
%
\subsubsection{\textbf{\textit{Construction of the numerical scheme}}}

Let $N$ be a non negative integer. Let $\Delta x~=~\dfrac{\ell}{N} \ $. Consider the uniform subdivision of $[0,\ell]$ given by:
\[
0=x_{0} <x_{1}< ...<x_{N-1}<x_{N}=\ell, \quad \textit{ i.e. }  x_{j} =j \Delta x \,,\, j = 0,\ldots,N \ .
\]
For the sake of simplicity, we will suppose that the set $I \subset (0,\ell)$ is chosen as $I = [x_{i_{0}}, x_{i_{1}}] \,,\, x_{i_{0}} \mbox{ and }  x_{i_{1}}$ being two mesh points.

Set $t^{n+1}-t^{n}=\Delta t$ for all $n \in \mathbb{N}$. We will suppose that $T = K \times \Delta t \,,\, K \in \N^{*}$ to write easily the discretization of the delay term. 
We will also suppose that $T_{f} = M \Delta t$, with $M > K $, be the final time.

\textbf{First step: for time $\boldsymbol{t \in [0,T)}$.}

We proceed exactly the same way as in the previous section, the only novelty is now the Dirichlet boundary condition at $x = \ell$.
For interior points $x \in (0,\ell)$ and for time $0 \leq t < T$, the explicit finite-difference discretization of equation \eqref{wave-int} writes for $n =0, \ldots, K-1\,,\, $ and $j= 1, \ldots, N-1$: 
\begin{align} \label{discrete-wave-internal}
\dfrac{u_j^{n+1}-2u_j^n +u_j^{n-1}}{\Delta t^2} - \dfrac{u_{j+1}^{n} -2 u_j^n +u_{j-1}^{n}}{\Delta x^2} &= 0.
\end{align}
The Dirichlet boundary condition \eqref{dirichlet-int} at $x = 0 \,,\, x = \ell$ reads: for $ n = 0, \ldots, K-1$, 
\begin{align} \label{discrete-dirichlet-internal}
u_{0}^{n} = u_{N}^{n} = 0 \ .
\end{align}
As in the previous section, using the initial condition \eqref{init-internal}-\eqref{init1-internal}, the scheme is defined for $t^{1} = \Delta t$.
Finally, the solution $u$ can be computed at any time $t^n$.

In order to compute the solution $u$ beyond the time $T$, we have to compute the quantity $d(x) u_t(x, t)$ for $x \in  [x_{i_{0}}, x_{i_{1}}]$ and for time $t \in [0,T)$. 
The centered difference scheme is used and we compute:
\begin{align*}
\mbox{for } j = i_{0},\,\ldots,\, i_{1}\,,\ & v_{j}^{0} = d(x_{j})u_{1}(x_{j}), \\
\mbox{ for } n=1,\ldots, K-1 \,,\, \mbox{for } j = i_{0},\,\ldots,\, i_{1}\,,\ &v_{j}^{n} =  d(x_{j}) \dfrac{u_{j}^{n+1} - u_{j}^{n-1}}{2 \Delta t}.
\end{align*}

\textbf{Second step: for time $\boldsymbol{t \in [T,T_{f}]}$}

The only novelty comes from the internal delay term. As we have set $T = K \Delta t$,
the discretization of the wave equation with internal delayed damping  and Dirichlet boundary condition reads: for $n=K,\ldots,M$,
\begin{align*}
\mbox{for } j = 1,\,\ldots,\, i_{0} - 1 \quad &\dfrac{u_j^{n+1}-2u_j^n +u_j^{n-1}}{\Delta t^2} - \dfrac{u_{j+1}^{n} -2 u_j^n +u_{j-1}^{n}}{\Delta x^2} = 0, \\
\mbox{for } j = i_{0},\,\ldots,\, i_{1}  \quad &\dfrac{u_j^{n+1}-2u_j^n +u_j^{n-1}}{\Delta t^2} - \dfrac{u_{j+1}^{n} -2 u_j^n +u_{j-1}^{n}}{\Delta x^2} \boldsymbol{+ \mu v_{j}^{n-K}}= 0, \\
\mbox{for } j = i_{1}+1,\,\ldots,\, N - 1 \quad &\dfrac{u_j^{n+1}-2u_j^n +u_j^{n-1}}{\Delta t^2} - \dfrac{u_{j+1}^{n} -2 u_j^n +u_{j-1}^{n}}{\Delta x^2} = 0, \\
&u_{0}^{n} = u_{N}^{n} = 0 
\end{align*}
whereas the computation of the internal delay damping term reads: for $n=K,\ldots,M$,
\begin{align*}
\mbox{for } j = i_{0},\,\ldots,\, i_{1}\,,\ &v_{j}^{n} =  d(x_{j}) \dfrac{u_{j}^{n+1} - u_{j}^{n-1}}{2 \Delta t}.
\end{align*}

As in the previous section, we set $s = \left(\dfrac{\Delta t}{\Delta x}\right)^{2}$. Let us now summarize the computation of the solution and the internal delay term.

\textbf{First step: for time $\boldsymbol{t \in [0,T)}$}

\noindent$\mbox{Initialization } \mbox{ for } j = 0,\ldots, N \,,\, u^{0}_{j} = u_{0}(x_{j}) \quad \mbox{for } j= i_{0},\,\ldots,\, i_{1}\,,\ v_{j}^{0} = d(x_{j})u_{1}(x_{j}),$\\
$ \mbox{Solution for } t = \Delta t$\\
$\hspace*{2cm} \mbox{for } j = 1,\ldots, N-1 \,,\, u^{1}_{j} = \dfrac{s}{2}\Big(u^{0}_{j+1} + u^{0}_{j-1}\Big)+ (1 - s) \, u^{0}_{i} + \Delta t \, u_{1}(x_{j})$\\
$\hspace*{2cm} \mbox{Dirichlet boundary condition } \quad \quad u^{1}_{0} = 0.$\\
$\hspace*{2cm} \mbox {Dirichlet boundary condition}\quad \quad u^{1}_{N} = 0.$\\
$ \mbox{Solution for } t \in (\Delta t,T) \mbox{ i.e. for } n=1,\ldots, K-1,$\\
$\hspace*{2cm} \mbox{for } j = 1,\ldots, N-1 \,,\, u^{n+1}_{j} = s \, \Big(u^{n}_{j+1} + u^{n}_{j-1}\Big)+ 2 (1 - s) \, u^{n}_{i} - u^{n-1}_{j}.$\\
$\hspace*{2cm} \mbox{Dirichlet boundary condition } \quad u^{n+1}_{0} = 0.$\\
$\hspace*{2cm} \mbox {Dirichlet boundary condition }\quad u^{n+1}_{N} = 0$\\
$\hspace*{2cm} \mbox {Internal delay term}\hspace*{2.1cm}\mbox{for } j = i_{0},\,\ldots,\, i_{1}\,,\ v_{j}^{n} =  d(x_{j}) \dfrac{u_{j}^{n+1} - u_{j}^{n-1}}{2 \Delta t}.$\\

\textbf{Second step: for time $\boldsymbol{t \in [T,T_{f}]}$}

Let $M \geq K $. We denote $T_{f} = M \Delta t$ the final time. The only novelty comes from the internal delay term defined for $x \in  [x_{i_{0}}, x_{i_{1}}]$.\\
$ \mbox{Solution for } t \in [T,T_{f}] \ i.e. \ \mbox{for } n=K,\ldots, M-1,$\\
$\hspace*{2cm} \mbox{for } j = 1,\ldots, i_{0}-1\,,\, u^{n+1}_{j} = s \, \Big(u^{n}_{j+1} + u^{n}_{j-1}\Big)+ 2 (1 - s) \, u^{n}_{i} - u^{n-1}_{j},$\\
$\hspace*{2cm} \mbox{for } j = i_{0},\ldots, i_{1}\,,\, u^{n+1}_{j} = s \, \Big(u^{n}_{j+1} + u^{n}_{j-1}\Big)+ 2 (1 - s) \, u^{n}_{i} - u^{n-1}_{j} - \mu \Delta t^{2} v_{j}^{n-K},$\\
$\hspace*{2cm} \mbox{for } j = i_{0}+1,\ldots, N-1,\,,\, u^{n+1}_{j} = s \, \Big(u^{n}_{j+1} + u^{n}_{j-1}\Big)+ 2 (1 - s) \, u^{n}_{i} - u^{n-1}_{j}.$\\
$\hspace*{2cm} \mbox{Dirichlet boundary condition } \quad \quad u^{n+1}_{0} = 0.$\\
$\hspace*{2cm} \mbox {Dirichlet boundary condition }\quad \quad u^{n+1}_{N} = 0.$\\
$\hspace*{2cm} \mbox{Internal delay term}\hspace*{2.1cm}\mbox{for } j = i_{0},\,\ldots,\, i_{1}\,,\ v_{j}^{n} =  d(x_{j}) \dfrac{u_{j}^{n+1} - u_{j}^{n-1}}{2 \Delta t}.$%
\subsubsection{\textbf{\textit{Discrete energy}}}
The aim of this section is to design a discrete energy that is preserved in the first step that is the free wave equation with Dirichlet boundary conditions. To this end, let us define:
\begin{itemize}
\item the discrete kinetic energy  as: 
\begin{equation*}
\displaystyle  E_{k}^{n} = \dfrac{1}{2} \sum_{j=1}^{N-1}\left(\dfrac{u_j^{n+1}-u_j^n}{\Delta t}\right)^2 \ ,
\end{equation*}
\item the discrete potential energy as: 
\begin{equation*}
E_{p}^{n} = \dfrac{1}{2} \sum_{j=0}^{N-1}\left(\dfrac{u_{j+1}^{n}-u_j^n}{\Delta x}\right) \left(\dfrac{u_{j+1}^{n+1}-u_j^{n+1}}{\Delta x}\right) \ .
\end{equation*}
\end{itemize}
The total discrete energy is then defined as
\begin{equation}\label{discrete-energy-internal}
	\mathcal{E}^{n} = E_{k}^{n} + E_{p}^{n}.
\end{equation}
\begin{proposition}  The discrete energy is conserved for all $t = 0, \ldots, T-\Delta t,$ \textit{ i.e.}
\[
\forall \, n = 1, \ldots, K-1 \,,\, \mathcal{E}^{n+1} = \mathcal{E}^{n} \, .
\] 
\end{proposition}
\begin{proof} For this sake, we multiply the equation \eqref{discrete-wave-internal} by $(u_j^{n+1}-u_j^{n-1})$, we sum over $j = 1,\ldots,N~-~1$ and we obtain:
\begin{align}
	&\displaystyle\sum_{j=1}^{N-1}\dfrac{u_j^{n+1}-2u_j^n + u_j^{n-1}}{\Delta t^2}(u_j^{n+1}-u_j^{n-1})
	- \sum_{j=1}^{N-1} \dfrac{u_{j+1}^{n} - 2 u_j^n + u_{j-1}^{n}}{\Delta x^2} (u_j^{n+1}-u_j^{n-1})  = 0.
\label{discrete-energy-internal-multiply}
\end{align}
\textit{Estimation of the first term of \eqref{discrete-energy-internal-multiply}} We firstly have:
\begin{align}
	\displaystyle\sum_{j=1}^{N-1}\dfrac{u_j^{n+1}-2u_j^n + u_j^{n-1}}{\Delta t^2}(u_j^{n+1}-u_j^{n-1}) &= \displaystyle\sum_{j=1}^{N-1}\dfrac{(u_j^{n+1}-u_j^n)-(u_j^n -u_j^{n-1})}{\Delta t^2}
	\Big((u_j^{n+1}-u_j^n)+(u_j^n-u_j^{n-1})\Big)\nonumber\\
	&= \sum_{j=1}^{N-1} \bigg(\dfrac{u_j^{n+1}-u_j^n}{\Delta t}\bigg)^2 -\sum_{j=1}^{N-1} \bigg(\dfrac{u_j^{n+1}-u_j^{n-1}}{\Delta t}\bigg)^{2}. \label{discrete-Ec-internal}
\end{align}
\textit{Estimation of the second term of \eqref{discrete-energy-internal-multiply}.} Using the same trick we have: 
\begin{align*}
	-\sum_{j=1}^{N-1} \dfrac{u_{j+1}^{n} - 2 u_j^n + u_{j-1}^{n}}{\Delta x^2} (u_j^{n+1}-u_j^{n-1}) &= 
	-\sum_{j=1}^{N-1} \dfrac{(u_{j+1}^{n} - u_j^n)-( u_j^n-u_{j-1}^{n})}{\Delta x^2} (u_j^{n+1}-u_j^{n-1}) \\
	&=  -\ \sum_{j=1}^{N-1}\dfrac{(u_{j+1}^n-u_j^n)(u_j^{n+1}-u_j^{n-1})}{\Delta x^2} \\
	&+ \ \sum_{j=1}^{N-1}\dfrac{(u_{j}^n-u_{j-1}^n)(u_j^{n+1}-u_j^{n-1})}{\Delta x^2}.
\end{align*}
So, by translation of index in the second term in the previous sum, and since 
\begin{align*}
u_{0}^{n+1} &= u_{0}^{n-1}=0 \ , \\
u_{N}^{n+1} &= u_{N}^{n-1}=0  \ ,
\end{align*}
we will have: 
\begin{align*}
	- \ \sum_{j=1}^{N-1} \dfrac{u_{j+1}^{n} - 2 u_j^n + u_{j-1}^{n}}{\Delta x^2} (u_j^{n+1}-u_j^{n-1}) &= -  \  \sum_{j=0}^{N-1}\dfrac{(u_{j+1}^n-u_j^n)(u_j^{n+1}-u_j^{n-1})}{\Delta x^2}  \nonumber\\
	&+  \ \sum_{j=0}^{N-1}\dfrac{(u_{j+1}^n-u_{j}^n)(u_{j+1}^{n+1}-u_{j+1}^{n-1})}{\Delta x^2}. \nonumber
\end{align*}
Thus we have: 
\begin{align}
	- \sum_{j=1}^{N-1} \dfrac{u_{j+1}^{n} - 2 u_j^n + u_{j-1}^{n}}{\Delta x^2} (u_j^{n+1}-u_j^{n-1}) &=   \sum_{j=0}^{N-1}\dfrac{(u_{j+1}^{n+1}-u_j^{n+1})(u_{j+1}^{n}-u_j^{n})}{\Delta x^2}  \nonumber\\
	&-\sum_{j=0}^{N-1}\dfrac{(u_{j+1}^{n-1}-u_{j}^{n-1})(u_{j+1}^{n}-u_{j}^{n})}{\Delta x^2}. \label{discrete-Ep-internal}
\end{align}
Substituting \eqref{discrete-Ep-internal} and \eqref{discrete-Ec-internal} into \eqref{discrete-energy-internal-multiply}, we get \\ 
\begin{align} \label{Energy-conservation-internal}
 \sum_{j=1}^{N-1} &\bigg(\dfrac{u_j^{n+1}-u_j^n}{\Delta t}\bigg)^2 + \sum_{j=0}^{N-1}\dfrac{(u_{j+1}^{n+1}-u_j^{n+1})(u_{j+1}^{n}-u_j^{n})}{\Delta x^2} \nonumber\\ 
 &=\sum_{j=1}^{N-1} \bigg(\dfrac{u_j^{n}-u_j^{n-1}}{\Delta t}\bigg)^2  + \sum_{j=0}^{N-1}\dfrac{(u_{j+1}^{n}-u_j^{n})(u_{j+1}^{n-1}-u_j^{n-1})}{\Delta x^2}.
 \end{align}
The preceding equation gives: 
\[
\forall \, n = 1, \ldots, K-1\,,\, \mathcal{E}^{n+1} = \mathcal{E}^{n} \, .
\] 
 \end{proof}
As in the previous case, in order to obtain a stable numerical scheme, the following Courant-Friedrichs-Lewy, CFL, condition holds:
\[ \Delta t\leq \Delta x \ .
\]
\subsection{Pointwise case}
Let $\mu \in \mathbb{R}$, $\ell > 0 \,,\, \xi \in (0,\ell)$. We consider the following switching time delay problem:
\begin{align}
	u_{tt}(x,t) - u_{xx}(x,t)&=0 \quad \mbox{\rm for }(x,t) \in (0,\ell) \times (0,2 \ell) \label{wave-pointwise-free}\\
	u_{tt}(x,t) - u_{xx}(x,t) + \mu \, u_{t}(\xi,t-2 \ell) \, \delta_{\xi} &=0 \quad \mbox{\rm for }(x,t) \in (0,\ell) \times (2 \ell,+\infty) \label{wave-pointwise}\\
	u(0,t)  &=  0 \quad \mbox{ for } t > 0 \label{dirichlet-pointwise}\\
	u_x(\ell,t) &= 0   \quad \mbox{ for } t > 0\label{Neuman-pointwise}
\end{align} 
with the following initial data:
\begin{equation} \label{init-pointwise}
	u(x,0)=u_0(x), \quad x \in (0,\ell)
\end{equation}
and 
\begin{equation} \label{init1-pointwise}
	u_t(x,0)=u_1(x) \quad x \in (0,\ell)
\end{equation}
Here, $A = - \partial^{2}_{x}$ be the unbounded operator in $H = L^2(0,\ell)$
with domain
\[ H_{1}={\mathcal D}(A) = \left\{u \in H^{2}(0,\ell); \, u(0) = 0, \, u_{x}(\ell) = 0 \right\}, \quad H_{\frac{1}{2}}= {\mathcal D}(A^{\frac{1}{2}}) = \left\{u \in H^{1}(0,\ell); \, u(0) =0 \right\}
\]
and
\[
U = \mathbb{R}, 
B \in {\mathcal L}(\R, H_{-{\frac{1}{2}}}), \, Bk = k \, \delta_{\xi}, \forall \, k \in \R, \, B^{*}u = u(\xi), \, \forall \, u \in H_{\frac{1}{2}} \ .
\]
We have according to \cite{ANP1} (see \cite{aht,aht2} for the case without delay) the following stability result:
\begin{theorem}\cite{ANP1} \label{princ}
\begin{enumerate}
\item 
	We suppose that $\xi = \frac{\ell}{2}$. Then for any $\mu \in (0, 2)$
	there exist  positive constants $C_1, C_2$ such that for all initial data in ${\mathcal H}$, the  solution of problem \eqref{wave-pointwise-free}-\eqref{init1-pointwise}
	satisfies
	\begin{equation}\label{expestimate}
		E(t)\le C_{1} \, e^{- \, C_{2} t}.
	\end{equation}
	The constant $C_{1}$ depends  on the initial data, on $\ell$ and on $\mu$, while $C_{2}$ depends only  on $\ell$ and on $\mu$. 
 \item 
For $\xi = \frac{\ell}{2}$ and $\mu =2$, if we denote $\mathcal{S}_{p,2} (t)$ the propagator of the pointwise delayed control problem with $\mu = 2$, we have by definition: 
\[
\forall t > 0 \,,\, \mathcal{S}_{p,2}(t)
\begin{pmatrix}
u_{0} \\
u_{1}
\end{pmatrix} =
\begin{pmatrix}
u \\
u_{t}
\end{pmatrix}.
\]
And we have that
\[
\forall t > 0 \,,\, \forall n \in \N^* \,,\, \mathcal{S}_{p,2}(t+ 2n T)
\begin{pmatrix}
u_{0} \\
u_{1}
\end{pmatrix} = - \, 
\mathcal{S}_{p,2}(t)
\begin{pmatrix}
u_{0} \\
u_{1}
\end{pmatrix}
\]
wehereas 
\[
\forall t > 0 \,,\, \forall n \in \N^* \,,\, \mathcal{S}_{p,2}(t+ (2n+1) T)
\begin{pmatrix}
u_{0} \\
u_{1}
\end{pmatrix} = 
\mathcal{S}_{p,2}(t)
\begin{pmatrix}
u_{0} \\
u_{1}
\end{pmatrix}.
\]
So in particular, we have that $\mathcal{S}_{p,2}$ is $3T$-periodic.
 \end{enumerate}
\end{theorem}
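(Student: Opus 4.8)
The plan is to reduce both parts to the explicit d'Alembert (equivalently spectral) description of the free flow on $(0,\ell)$ with a Dirichlet condition at $0$ and a Neumann condition at $\ell$, and then to propagate the delayed feedback through Duhamel's formula window by window on the intervals $[jT,(j+1)T]$, $j\ge 0$, with $T=2\ell$. The first thing to record is the structural identity that makes $T=2\ell$ special: $A=-\partial_x^2$ with these boundary conditions has eigenvalues $\lambda_k=\big((k+\tfrac12)\pi/\ell\big)^2$ and eigenfunctions $e_k(x)=\sqrt{2/\ell}\,\sin\big((k+\tfrac12)\pi x/\ell\big)$, $k\ge 0$, so $\sqrt{\lambda_k}\,T=(2k+1)\pi$ and hence $\mathcal T(T)=-\mathrm{Id}_{\mathcal H}$; equivalently every free solution extends to a $4\ell$-periodic profile in $x$ and is $T$-anti-periodic in $t$. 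Since $|e_k(\ell/2)|=\ell^{-1/2}$ for every $k$, the point $\xi=\ell/2$ is a node of no eigenfunction, which (together with the uniform spectral gap and, because $T=2\ell$ is exactly the critical Ingham time, a direct d'Alembert computation in place of a soft harmonic-analysis argument) yields both the admissibility estimate \eqref{inegobsv} for $B^*u=u(\ell/2)$ and the matching observability lower bound $c\,\|(w_0,w_1)\|_{\mathcal H}^2\le\int_0^T|\dot\phi(\ell/2,t)|^2\,dt$ for free solutions $\phi$, with explicit sharp constants.

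For part (1) I would then run an energy recursion on the windows. On $[0,T]$ the flow is the isometry $\mathcal T$, so $E$ is constant; on $[jT,(j+1)T]$ with $j\ge 1$ one has the dissipation identity $\tfrac{d}{dt}E(t)=-\mu\,\dot u(\ell/2,t)\,\dot u(\ell/2,t-T)$, while Proposition \ref{propexistunicb} (specialised to this $B$) represents the state on the window as $\mathcal T(\cdot-jT)\,U(jT)$ plus the Duhamel term driven by $-\mu\,\dot u(\ell/2,\cdot-T)$, and, using $\mathcal T(T)=-\mathrm{Id}$, $U((j+1)T)=-U(jT)+$ (endpoint Duhamel term). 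Substituting the admissibility/observability pair into these identities and balancing the resulting estimates gives a contraction $E((j+1)T)\le\kappa(\mu)\,E(jT)$ with $\kappa(\mu)<1$ precisely for $0<\mu<2$ — the threshold being $2$ rather than the $1$ of the boundary case because an interior point is hit from both sides, so the effective gain of the midpoint feedback is doubled. Iterating gives $E(nT)\le\kappa(\mu)^n E(0)$, and the uniform growth bound \eqref{continuitycas1} on each window upgrades this to $E(t)\le C_1 e^{-C_2 t}$ with the announced dependence of $C_1$ on the data, $\ell$ and $\mu$ and of $C_2$ on $\ell$ and $\mu$.

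Part (2) is the borderline $\mu=2$, where $\kappa(2)=1$ and one instead has an exact cancellation. Here I would compute directly with the d'Alembert solution: the transmission condition $[u_x]_{\ell/2}(t)=2\,\dot u(\ell/2,t-T)$, fed the $T$-anti-periodic incoming data produced by $\mathcal T$, turns one full delay cycle into a pure reflection, so that propagating the state through successive windows produces the sign pattern $\mathcal S_{p,2}(t+2nT)=-\mathcal S_{p,2}(t)$ and $\mathcal S_{p,2}(t+(2n+1)T)=\mathcal S_{p,2}(t)$, the $3T$-periodicity being the case $n=1$ of the latter; this is the announced adaptation of \cite[Theorem 2.3 and Corollary 2.5]{ACS1}. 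The main obstacle throughout is sharpness: soft semigroup or Ingham arguments only produce decay for some small $\mu_0>0$ and cannot detect the exact threshold $\mu=2$ nor the clean periodicity at it, so the whole argument has to be anchored on the exact d'Alembert identities that hold precisely at the critical time $T=2\ell$.
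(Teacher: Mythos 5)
The paper itself does not prove Theorem \ref{princ}: it quotes it from \cite{ANP1}, remarks only that assertion (2) is a simple adaptation of \cite[Theorem 1.2]{ANP} for $\mu=2$, $\xi=\ell/2$, and records the equivalent transmission formulation \eqref{wave-moins}--\eqref{Neuman-plus} on which that cited proof is based. Your structural observations are correct and are indeed the ones that drive the cited argument: $\sqrt{\lambda_k}=(k+\tfrac12)\pi/\ell$, hence $\mathcal{T}(2\ell)=-\mathrm{Id}$ and anti-periodicity of the free flow, $|e_k(\ell/2)|=\ell^{-1/2}$ for every $k$, the dissipation identity $\frac{d}{dt}E(t)=-\mu\,\dot u(\xi,t)\,\dot u(\xi,t-T)$, and the window-by-window Duhamel representation of Proposition \ref{propexistunicb}. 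So the route you sketch is aligned with the reference rather than genuinely different.

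As a proof, however, there is a genuine gap exactly where the theorem is sharp. The decisive step --- the one-window contraction $E((j+1)T)\le\kappa(\mu)\,E(jT)$ with $\kappa(\mu)<1$ precisely for $0<\mu<2$, and the exact cancellation at $\mu=2$ --- is asserted, not derived: ``balancing'' the admissibility and observability constants cannot detect the threshold $2$ (you say so yourself), and the ``exact d'Alembert identities'' that are supposed to replace the soft estimates are never written down; the heuristic that an interior point is hit from both sides and so doubles the gain is not a computation. In addition, the recursion is formulated with $E(jT)$ as the sole state, but for $j\ge 2$ the forcing on $[jT,(j+1)T]$ is the velocity trace of a \emph{non-free} solution on the previous window, hence not a function of $U(jT)$ alone; the induction variable must carry the history (as $\Lambda_j$ does in the proof of Proposition \ref{propexistunicb}, or as the $z$-variable of Section 2.2 does), or one must work with the explicit piecewise solution as in \cite{ANP1} --- otherwise the claimed one-step contraction is not even well posed. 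The same criticism applies to part (2): to obtain the stated sign pattern for $\mathcal{S}_{p,2}$ one has to propagate the explicit solution through the transmission condition \eqref{grad-xi} with $\mu=2$ across successive windows (this is precisely the adaptation of \cite[Theorem 1.2]{ANP} the paper alludes to); ``one full delay cycle becomes a pure reflection'' is the correct picture, but it is the statement to be proved, not a proof of it.
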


We note here that the proof of the second assertion of the above Theorem \ref{princ} is a simple adaptation of the proof of \cite[Theorem 1.2]{ANP} for $\mu = 2$ and $\xi = \frac{\ell}{2}$. 

\begin{remark}
	In the sequel we will thus choose $\xi = \frac{\ell}{2}$.
\end{remark}
The proof of the preceding result conducted by Ammari, Nicaise and Pignotti in \cite{ANP1} is based on the following equivalent formulation of 
\eqref{wave-pointwise}-\eqref{dirichlet-pointwise}-\eqref{Neuman-pointwise} for $t \geq 2\ell$:
\begin{align}
	u_{tt}^{-}(x,t) - u_{xx}^{-}(x,t)&=0 \quad \mbox{\rm for } x \in (0,\xi) \,,\, t \geq 2 \ell \label{wave-moins}\\
	u_{tt}^{+}(x,t) - u_{xx}^{+}(x,t)&=0 \quad \mbox{\rm for } x \in (\xi,\ell) \,,\, t \geq 2 \ell \label{wave-plus}\\
	u^{-}(\xi^{-},t) &= u^{+}(\xi^{+},t)\quad \mbox{\rm for } t \geq 2 \ell \label{continuity-xi}\\
	u_{x}^{-}(\xi^{-},t) - u_{x}^{+}(\xi^{+},t) & = -\mu  u_{t}^{-}(\xi^{-},t-2 \ell) = - \mu  u_{t}^{+}(\xi^{+},t-2 \ell) \quad \mbox{\rm for } t \geq 2 \ell \label{grad-xi}\\
	u(0,t)  &=  0 \quad \mbox{ \rm for } t \geq 2 \ell  \label{dirichlet-plus}\\
	u_x(\ell,t) &= 0   \quad \mbox{ \rm for } t \geq 2 \ell \label{Neuman-plus}
\end{align} 
where we define for $\xi^{-}$ (for instance):
\[
u^{-}(\xi^{-},t) = \displaystyle \lim_{\underset{x < \xi}{x \to \xi}} u^{-}(x,t)\,,\, u_{x}^{-}(\xi^{-},t) = \displaystyle \lim_{\underset{x < \xi}{x \to \xi}} u_{x}^{-}(x,t).
\]
A singularity is thus occurring at the point $\xi$; so a finite difference discretization is not well adapted to furnish a good approximation. We thus decide to use rather a finite volume scheme well adapted to deal with
the case of a point source term \cite{ERG2000}

\subsubsection{\textbf{\textit{Construction of the numerical scheme: the finite volume discretization}}}
Let $N$ be a non negative even integer. Let $\Delta x~=~\dfrac{\ell}{N} \ $.
Following \cite{ERG2000}, the uniform admissible mesh $\mathcal{T}$ of the interval $(0,\ell)$ is given by the family $\big\{ K_{j} \,  ,\,j \in \{1, \cdots, N\} \big\}$ 
of control volumes such that $K_{j}  = ( x_{j - \frac{1}{2} } , x_{j+  \frac{1}{2}} )$ and the family $( x_{j})_{j = 0, \cdots, N+1 }$ assumed to be the center of $(K_{j})_{j= 1, \cdots, N}$ such that:
\[
0=x_{0}=x_{\frac{1}{2}} <x_{1}<x_{\frac{3}{2}}< x_{2}<...<x_{N-\frac{1}{2}}<x_{N}<x_{N+\frac{1}{2}}=x_{N+1}=\ell \\
\]
that is $x_{j+\frac{1}{2}} =j\Delta x \;,\; j = 0,\ldots,N \,,\, x_{j} =(j-\frac{1}{2})\Delta x \,,\, j= 1,\ldots,N \ .$
For $j=1,\ldots \,,\, N$, we denote
\begin{align*}
	h_{j} &= x_{j+\frac{1}{2}} - x_{j-\frac{1}{2}} = \Delta x \, \\
	h_{j}^{-} &= x_{j} - x_{j-\frac{1}{2}} = \frac{\Delta x }{2} \,  \\
	h_{j}^{+} &= x_{j+1} - x_{j+\frac{1}{2}} =\frac{\Delta x}{2} \\
	h_{j+\frac{1}{2}}&= x_{j+1} - x_{j-1} = \Delta x\\
	K_{j}&=(x_{j-\frac{1}{2}},x_{j+\frac{1}{2}})
\end{align*}

As $0=x_{0}=x_{\frac{1}{2}}$, we have $h_{\frac{1}{2}}= \dfrac{\Delta x}{2}$ and as $x_{N+\frac{1}{2}}=x_{N+1}=\ell$, we also have $h_{N + \frac{1}{2}} = \dfrac{\Delta x}{2}$ 

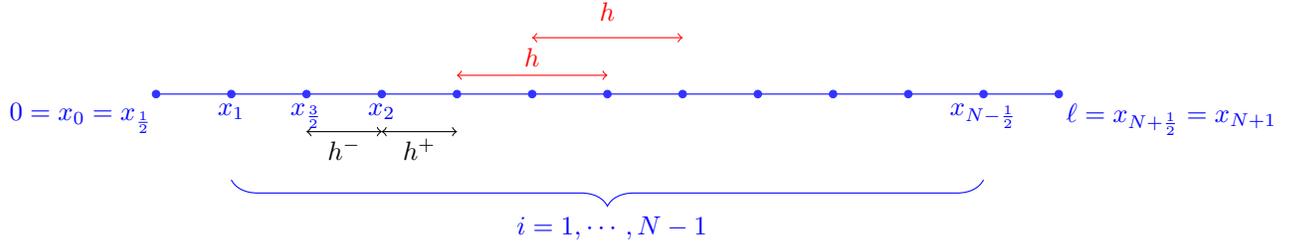
\begin{figure}[h!]
	\begin{center}
		\begin{tikzpicture}
			\draw[blue,-](-2,0)--(10,0);
			\node[blue,below] at (-3,0){\scalebox{1}{$0=x_{0}= x_{\frac{1}{2}}$}};
			\node at (-2,0) [circle, scale=0.3, draw=blue!80,fill=blue!80] {};
			
			\node[blue,below] at (11.5,0){\scalebox{1}{$\ell = x_{N+\frac{1}{2}} = x_{N+1}$}};
			\node at (10,0) [circle, scale=0.3, draw=blue!80,fill=blue!80] {};
			
			\draw[red,<->](2,0.25)--(4,0.25);
			\node[red,above] at (3,0.25){\scalebox{1}{$h$}};
			
			\draw[red,<->](3,0.75)--(5,0.75);
			\node[red,above] at (4,0.85){\scalebox{1}{$h$}};
			
			\draw[black,<->](1,-0.5)--(2,-0.5);
			\node[black,above] at (1.5,-1.0){\scalebox{1}{$h^{+}$}};
			
			\draw[black,<->](0,-0.5)--(1,-0.5);
			\node[black,above] at (0.5,-1.0){\scalebox{1}{$h^{-}$}};
			
			\draw [blue,decorate,decoration={brace,amplitude=10pt,mirror,raise=4pt},yshift=0pt]
			(-1,-1) -- (9,-1)  ;		
			\node[blue,below] at (4,-1.5){\scalebox{1}{ $i= 1, \cdots, N-1$}}; 
			
			
			\node[blue,below] at (-1,0){\scalebox{1}{$x_{1}$}};
			\node at (-1,0) [circle, scale=0.3, draw=blue!80,fill=blue!80] {};	
			\node[blue,below] at (0,0){\scalebox{1}{$x_{\frac{3}{2}}$}};
			\node at (0,0) [circle, scale=0.3, draw=blue!80,fill=blue!80] {};
			
			\node[blue,below] at (1,0){\scalebox{1}{$x_{2}$}};
			\node at (1,0) [circle, scale=0.3, draw=blue!80,fill=blue!80] {};
			\node at (2,0) [circle, scale=0.3, draw=blue!80,fill=blue!80] {};
			\node at (3,0) [circle, scale=0.3, draw=blue!80,fill=blue!80] {};
			\node at (4,0) [circle, scale=0.3, draw=blue!80,fill=blue!80] {};
			\node at (5,0) [circle, scale=0.3, draw=blue!80,fill=blue!80] {};
			\node at (6,0) [circle, scale=0.3, draw=blue!80,fill=blue!80] {};		
			\node at (7,0) [circle, scale=0.3, draw=blue!80,fill=blue!80] {};		
			\node at (8,0) [circle, scale=0.3, draw=blue!80,fill=blue!80] {};		
			\node at (9,0) [circle, scale=0.3, draw=blue!80,fill=blue!80] {};
			\node[blue,below] at (9,0){\scalebox{1}{$x_{N-\frac{1}{2}}$}};

		\end{tikzpicture}
	\end{center}
	\caption{A model representing the admissible one-dimensional mesh}\label{fig1}
\end{figure}
Set $t^{n+1}-t^{n}=\Delta t$ for all $n \in \mathbb{N}$. We will suppose that $2 \ell = T = K \times \Delta t \,,\, K \in \N^{*}$ to write easily the discretization of the delay term.
Using this uniform discretization, the point $x  = \dfrac{\ell}{2}$ is the point $x_{j_{0}+\frac{1}{2}} = \dfrac{\ell}{2}$ where $j_{0} = N/2$.
For $j=1,\ldots \,,\, N$, and $t > 0$, we denote
\[
u_{j}(t) = \frac{1}{h_{j}} \int_{x_{j-\frac{1}{2}}}^{x_{j+\frac{1}{2}}} u(x,t) dx 
\]
the mean value of $u$ on the cell $K_{j}$ and for $t = t_{n}$, we denote $u_{j}^{n} = u_{j}(t_{n})$.

As $\forall t \in [0,T] \,,\, u(0,t) = 0$, we will denote  $\forall n=1,\ldots N \,,\, u_{0}^{n} = 0$. If needed, we also denote $u_{N+1}^{n}~=~u(\ell,t^{n})$.

\medskip 

\textbf{First step: for time $\boldsymbol{t \in [0,T)}.$}

\medskip 

As originally pointed out by Eymard, Gallou\"et and Herbin \cite{ERG2000}, the principle of the finite volume method for conservation laws is to integrate the equation \eqref{wave-pointwise-free} on each cell $K_{j}$ and then approximate \textit{the fluxes at the interface}.
Thus for $j=1,\ldots,N$, and for $t=t^{n}$, we got:
\begin{align*}
	\int_{K_{j}} u_{tt}(x,t^{n}) dx - \left(u_{x}(x_{j+\frac{1}{2}},t^{n}) - u_{x}(x_{j-\frac{1}{2}},t^{n})\right) &= 0.
\end{align*}
At this stage, we denote by $F_{j+\frac{1}{2}}^{n}$ \textit{the numerical flux} which is an approximation of the flux at the interface  $-u_{x}(x_{j+\frac{1}{2}},t^{n})$.

As $u_{x}(\ell,t^{n}) =0$, we already have $F_{N+\frac{1}{2}}^{n} = 0$ and we can set $u_{N+1}^{n}~=u_{N}^{n}$.

Since $u_{j}^{n}$ may also be viewed as an approximation of $u(x_{j},t^{n})$ and $h_{j+\frac{1}{2}}~=~x_{j+1}-x_{j}$, as $u(0,t^{n}) = 0$, as done in \cite{ERG2000}, a reasonable choice for the computation of the numerical fluxes $F_{j+\frac{1}{2}}^{n}$ is given by:
\begin{equation*} 
	F_{j+{\frac{1}{2}}}^{n} =
	\left\{
	\begin{array}{ll}
		- \dfrac{u_{1}^{n}}{h_{\frac{1}{2}}} , & \mbox{ if } j= 0, \\[0.4cm]
		- \left(\dfrac{u_{j+1}^{n} - u_{j}^{n}}{h_{j+\frac{1}{2}}}\right) & j = 1, \ldots, N-1 \\[0.4cm]
		0& \mbox{ if } j= N \quad 
	\end{array}
	\right.
	= 
	\left\{
	\begin{array}{ll}
		- \dfrac{2 u_{1}^{n}}{\Delta x} , & \mbox{ if } j= 0, \\[0.4cm]
		- \left(\dfrac{u_{j+1}^{n} - u_{j}^{n}}{\Delta x}\right) & j = 1, \ldots, N-1, \\[0.4cm]
		0& \mbox{ if } j= N \quad .
	\end{array}
	\right.
\end{equation*}
For the sake of homogeneity, we will denote $\alpha_{j+\frac{1}{2}}$ as
\[
\alpha_{j+\frac{1}{2}} = 
\left\{\begin{array}{ll}
	\dfrac{1}{h_{\frac{1}{2}}}, & \mbox{ if } j= 0, \\[0.4cm]
	\dfrac{1}{h_{j+\frac{1}{2}}} & j = 1, \ldots, N-1 \\[0.4cm]
	0& \mbox{ if } j= N \quad 
\end{array}
\right.
=
\left\{\begin{array}{ll}
	\dfrac{2}{\Delta x}, & \mbox{ if } j= 0, \\[0.4cm]
	\dfrac{1}{\Delta x} & j = 1, \ldots, N-1 \\[0.4cm]
	0& \mbox{ if } j= N \quad .
\end{array}
\right.
\]
such that
\[
\mbox{forall } j = 1, \ldots, N \,,\,  F_{j+{\frac{1}{2}}}^{n} = -\alpha_{j+\frac{1}{2}} \left({u_{j+1}^{n} - u_{j}^{n}}\right).
\]

Moreover since $\displaystyle \int_{K_{j}} u_{tt}(x,t^{n}) dx  = \dfrac{d^{2}\displaystyle \int_{K_{j}} u(x,t^{n}) dx}{dt^{2}}$, the numerical approximation of the equation \eqref{wave-pointwise-free} is given by: for $j = 1, \ldots, N-1$,
\begin{align*}
	h_{j} u_{j}(t^{n})_{tt} + (F_{j+{\frac{1}{2}}}^{n} - F_{j-{\frac{1}{2}}}^{n}) = 0
\end{align*}
The central difference approximation of the second time derivative permits us to finally write: $\mbox{ for } n \geq 1 \,,\, \mbox{ for } j = 1, \ldots, N$,
\begin{align*}
	h_{j} \dfrac{u_{j}^{n+1} - 2 u_{j}^{n} + u_{j}^{n-1}}{{\Delta t}^{2}} + (F_{j+{\frac{1}{2}}}^{n} - F_{j-{\frac{1}{2}}}^{n}) = 0
\end{align*}
or in its homogeneous form: $\mbox{ for } n \geq 1 \,,\, \mbox{ for } j = 1, \ldots, N$,
\begin{align}\label{discrete-wave-pointwise}
	h_{j} \dfrac{u_{j}^{n+1} - 2 u_{j}^{n} + u_{j}^{n-1}}{{\Delta t}^{2}} 
	- \Big[ \alpha_{j+\frac{1}{2}} \left({u_{j+1}^{n} - u_{j}^{n}}\right) - 
	\alpha_{j-\frac{1}{2}} \left({u_{j}^{n} - u_{j-1}^{n}}\right) \Big]= 0 \ .
\end{align}
According to the initial conditions given by equations \eqref{init-pointwise}, we firstly choose: for $j =1,\ldots, N$,
\begin{equation*}
	u^{0}_{j} = u_{0}(x_{j}).
\end{equation*}
We can use the second initial conditions \eqref{init1-pointwise} to find the values of $u$ at time $t^{1} = \Delta t$, by employing a ``ghost'' time-boundary \textit{i.e.} $t^{-1}= - \Delta t$
and the second-order central difference formula:
\begin{equation*} 
	\mbox{for } j =1,\ldots, N \,,\, u_{1}(x_{j}) =\left.\dfrac{\partial u}{\partial t}\right|_{x_{j},0} =  \dfrac{ u_{j}^{1} - u_{j}^{-1} } {2 \Delta t} + O(\Delta t^{2}) .
\end{equation*}
Thus we have $\mbox{for } j =1,\ldots, N$:
\begin{equation*}
	u_{j}^{-1}=  u_{j}^{1}  - 2 \Delta t \ u_{1}(x_{j}) \ .
\end{equation*} 
Setting $n=0$, in the numerical scheme \eqref{discrete-wave-pointwise}, the previous equalities permits to compute $(u_{j}^{1})_{j=0,N}$. 
Finally, the solution $u$ can be computed at any time $t^n$.

In order to compute the solution $u$ beyond the time $T$, we have to compute the quantity $u_t(\xi,t)$ for time $t \in [0,T)$.
As $\xi = x_{j_{0}+\frac{1}{2}}$, since the mesh is uniform, we use the mean value of $u_{j_{0}}$ and $u_{j_{0}+1}$ as an approximation
of $u(x_{j_{0}+\frac{1}{2}},t)$. We will see in the next construction of the numerical fluxes that this formula is well adapted.
We use the centered difference scheme in time to finally compute:
\begin{align*}
	v^{0} &= u_{1}(x_{j_{0}}),\\
	\mbox{ for } n=1,\ldots, K-1 \,,\,  u_{t}(\xi,t^{n}) \approx v^{n} &= \dfrac{\dfrac{\left(u_{j_{0}+1}^{n+1} + u_{j_{0}-1}^{n+1} \right)}{2}- \dfrac{\left(u_{j_{0}+1}^{n-1} + u_{j_{0}-1}^{n-1} \right)}{2}}
	{2 \Delta t}.
\end{align*}

\medskip 

\textbf{Second step: for time $\boldsymbol{t > T}$.}

\medskip 

We will use the equivalent formulation \eqref{wave-moins}-\eqref{Neuman-plus} to construct the numerical scheme recalling that $x_{j_{0}+\frac{1}{2}} = \xi$.
As done before, we integrate the equation \eqref{wave-moins} and  \eqref{wave-plus} on each cell $K_{j}$ and then approximate \textit{the fluxes at the interface}.
Remarking that the cell $K_{j_{0}-1} = (x_{j_{0}-\frac{1}{2}},x_{j_{0}+\frac{1}{2}}) $  and $K_{j_{0}} = (x_{j_{0}+\frac{1}{2}},x_{j_{0}+\frac{3}{2}}) $, we denote
\begin{itemize}
	\item for $j = 1,\ldots,j_{0}$, 
	\[
	u^{{-,n}}_{j} =  \frac{1}{h_{j}} \int_{x_{j-\frac{1}{2}}}^{x_{j+\frac{1}{2}}} u^{-}(x,t) dx 
	\]
	\item for $j = j_{0}+1,\ldots N$, 
	\[
	u^{{+,n}}_{j} =  \frac{1}{h_{j}} \int_{x_{j-\frac{1}{2}}}^{x_{j+\frac{1}{2}}} u^{+}(x,t) dx. 
	\]
\end{itemize}
Since the flux $u_{x}(x_{j_{0}+\frac{1}{2}},t)$ is not define, we have to treat the two cells $K_{j_{0}} \,,\, K_{j_{0}+1} $ separately. 

As done before, integrating \eqref{wave-moins} on the cell $K_{j}$ gives:
\begin{align}\label{discrete-wave-pointwise-moins}
	\mbox{for } j = 1,\ldots,j_{0}-1 \quad h_{j} \dfrac{u_{j}^{-,n+1} - 2 u_{j}^{-,n} + u_{j}^{-,n-1}}{{\Delta t}^{2}} + (F_{j+{\frac{1}{2}}}^{-,n} - F_{j-{\frac{1}{2}}}^{-,n}) = 0
\end{align}
with 
\begin{equation*} 
	\mbox{for } j = 1,\ldots,j_{0}-1\,,\, F_{j+{\frac{1}{2}}}^{-,n} =
	\left\{
	\begin{array}{ll}
		- \dfrac{u_{1}^{-,n}}{h_{\frac{1}{2}}} & \mbox{ if } j= 0 \\[0.3cm]
		- \left(\dfrac{u_{j+1}^{-,n} - u_{j}^{-,n}}{h_{j+\frac{1}{2}}}\right) & 
	\end{array}
	\right.
	=
	\left\{
	\begin{array}{ll}
		- \dfrac{2 u_{1}^{-,n}}{\Delta x} & \mbox{ if } j= 0 \\[0.3cm]
		- \left(\dfrac{u_{j+1}^{-,n} - u_{j}^{-,n}}{\Delta x}\right). & 
	\end{array}
	\right.
\end{equation*}

Again, integrating \eqref{wave-plus} on the cell $K_{j}$ gives:

\begin{align}\label{discrete-wave-pointwise-plus}
	\mbox{for } j = j_{0}+2,N \quad h_{j}  \dfrac{u_{j}^{+,n+1} - 2 u_{j}^{+,n} + u_{j}^{+,n-1}}{{\Delta t}^{2}} + (F_{j+{\frac{1}{2}}}^{+,n} - F_{j-{\frac{1}{2}}}^{+,n}) = 0
\end{align}
with 
\begin{equation*} 
	\mbox{for } j = j_{0}+2\ldots,N \,,\, F_{j+{\frac{1}{2}}}^{+,n} =
	\left\{
	\begin{array}{ll}
		-\left(\dfrac{u_{j+1}^{+,n} - u_{j}^{+,n}}{h_{j+\frac{1}{2}}} \right) &  \\[0.3cm]
		0& \mbox{ if } j= N 
	\end{array}
	\right.
	=
	\left\{
	\begin{array}{ll}
		-\left(\dfrac{u_{j+1}^{+,n} - u_{j}^{+,n}}{\Delta x} \right) &  \\[0.3cm]
		0& \mbox{ if } j= N \quad .
	\end{array}
	\right.
\end{equation*}
Let us now treat the two cells $K_{j_{0}}$ and $K_{j_{0}+1}$.

\medskip

We integrate \eqref{wave-moins} on the cell $K_{j_{0}}$ to obtain:
\begin{align*}
	\int_{K_{j_{0}}} u_{tt}^{-}(x,t^{n}) dx - \left(u_{x}^{-}(x_{j_{0}+\frac{1}{2}},t^{n}) - u_{x}^{-}(x_{j_{0}-\frac{1}{2}},t^{n})\right) &= 0
\end{align*}
As $x_{j_{0}+\frac{1}{2}} = \xi$, we replace $u_{x}^{-}(x_{j_{0}+\frac{1}{2}},t^{n}) $ by $u_{x}^{-}(\xi^{-},t^{n})$.

Of course we already have approximated $- u_{x}^{-}(x_{j_{0}-\frac{1}{2}},t^{n})$ by the flux $F_{j_{0}-{\frac{1}{2}}}^{-,n}\ $.

\medskip

We integrate \eqref{wave-plus} on the cell $K_{j_{0}+1}$ to obtain:
\begin{align*}
	\int_{K_{j_{0}}+1} u_{tt}^{+}(x,t^{n}) dx - \left(u_{x}^{+}(x_{j_{0}+\frac{3}{2}},t^{n}) - u_{x}^{+}(x_{j_{0}+\frac{1}{2}},t^{n})\right) &= 0
\end{align*}
As $x_{j_{0}+\frac{1}{2}} = \xi$, we replace $u_{x}^{+}(x_{j_{0}+\frac{1}{2}},t^{n}) $ by $u_{x}^{+}(\xi^{+},t^{n})$.

Of course we already approximated $- u_{x}^{+}(x_{j_{0}+\frac{3}{2}},t^{n})$ by the flux $F_{j_{0}+{\frac{3}{2}}}^{+,n} \ $.

Let us now define the auxiliary variable $u_{j_{0}+\frac{1}{2}}^{n}$ as an approximation of $u^{+}(\xi^{+},t^{n})$. Because of the continuity equation \eqref{continuity-xi}, we firstly have $u_{j_{0}+\frac{1}{2}}^{n} = u^{+}(\xi^{+},t^{n}) = u^{-}(\xi^{-},t^{n})$.

So a good approximation of $u_{x}^{-}(\xi^{-},t^{n})$  and $u_{x}^{+}(\xi^{+},t^{n})$ is respectively:
\[
u_{x}^{-}(\xi^{-},t^{n}) \approx - F_{j_{0}+{\frac{1}{2}}}^{-,n} = \dfrac{u_{j_{0}+\frac{1}{2}}^{n} - u_{j_{0}}^{-,n}}{h_{j_{0}}^{-}} \quad \mbox{ and }  \quad u_{x}^{+}(\xi^{+},t^{n}) \approx - F_{j_{0}+{\frac{1}{2}}}^{+,n} =\dfrac{u_{j_{0}+1}^{+,n} - u_{j_{0}+\frac{1}{2}}^{n}}{h_{j_{0}+1}^{+}} \ .
\]
As the numerical scheme must verify the transmission condition \eqref{grad-xi}, to compute $u_{j_{0}+\frac{1}{2}}^{n} $, we use the discrete analog of \eqref{grad-xi}:
\[
- F_{j_{0}+{\frac{1}{2}}}^{-,n} + F_{j_{0}  + {\frac{1}{2}}}^{+,n} = - \mu  u_{t}(\xi,t^{n}-2 \ell) \ .
\]
Replacing the two fluxes by their expression leads to: 
\[
\dfrac{u_{j_{0}+\frac{1}{2}}^{n} - u_{j_{0}}^{-,n}}{h_{j_{0}}^{-}}- \dfrac{u_{j_{0}+1}^{+,n} - u_{j_{0}+\frac{1}{2}}^{n}}{h_{j_{0}+1}^{+}} = - \mu  u_{t}(\xi,t^{n}-2 \ell). 
\]
As we have considered a regular mesh, that is: $h_{j_{0}}^{-}= h_{j_{0}+1}^{+} = \Delta x /2$, we obtain finally:
\begin{align}
	u_{j_{0}+\frac{1}{2}}^{n} &= - \frac{\Delta x}{4}  \mu  u_{t}(\xi,t^{n}-2 \ell) + \dfrac{u_{j_{0}+1}^{+,n} + u_{j_{0}}^{-,n}}{2}. \label{auxiliary}
\end{align}
So the two fluxes are computed by: 
\begin{align}
	F_{j_{0}+{\frac{1}{2}}}^{-,n} &= \dfrac{ \mu u_{t}(\xi,t^{n}-2 \ell)}{2}  - \dfrac{u_{j_{0}+1}^{+,n} - u_{j_{0}}^{-,n}}{\Delta x} \label{fluxmoins}\\
	F_{j_{0}+{\frac{1}{2}}}^{+,n} &= -\dfrac{ \mu u_{t}(\xi,t^{n}-2 \ell)}{2}  - \dfrac{u_{j_{0}+1}^{+,n} - u_{j_{0}}^{-,n}}{\Delta x}. \label{fluxplus}
\end{align}

On the cell $K_{j_{0}}$, we get the fully discrete numerical scheme:
\begin{align*}
	h_{j_{0}}\dfrac{u_{j_{0}}^{-,n+1} - 2 u_{j_{0}}^{-,n} + u_{j_{0}}^{-,n-1}}{{\Delta t}^{2}} + (F_{j_{0}+{\frac{1}{2}}}^{-,n} - F_{j_{0}-{\frac{1}{2}}}^{-,n}) = 0 \quad ,
\end{align*}
which writes:
\begin{align}
	\Delta x \; \dfrac{u_{j_{0}}^{-,n+1} - 2 u_{j_{0}}^{-,n} + u_{j_{0}}^{-,n-1}}{{\Delta t}^{2}} - \left(\dfrac{u_{j_{0}+1}^{+,n} - u_{j_{0}}^{-,n}}{\Delta x} - \dfrac{u_{j_{0}}^{-,n} - u_{j_{0}-1}^{-,n}}{\Delta x}\right) = - \dfrac{ \mu u_{t}(\xi,t^{n}-2 \ell)}{2} \quad . \label{discrete-wave-pointwise-j0-moins}
\end{align}

On the cell $K_{j_{0}+1}$, we get the fully discrete numerical scheme:
\begin{align*}
	h_{j_{0}+1}\dfrac{u_{j_{0}+1}^{+,n+1} - 2 u_{j_{0}+1}^{+,n} + u_{j_{0}+1}^{+,n-1}}{{\Delta t}^{2}} + (F_{j_{0}+{\frac{3}{2}}}^{+,n} - F_{j_{0}+{\frac{1}{2}}}^{+,n}) = 0 \quad
\end{align*}
which writes:
\begin{align}
	\Delta x \; \dfrac{u_{j_{0}+1}^{+,n+1} - 2 u_{j_{0}+1}^{+,n} + u_{j_{0}+1}^{+,n-1}}{{\Delta t}^{2}} - \left(\dfrac{u_{j_{0}+2}^{+,n} - u_{j_{0}+1}^{+,n}}{\Delta x} - \dfrac{u_{j_{0}+1}^{+,n} - u_{j_{0}}^{-,n}}{\Delta x}\right) = - \dfrac{ \mu u_{t}(\xi,t^{n}-2 \ell)}{2} . \label{discrete-wave-pointwise-j0-plus}
\end{align}
So one can remark that the delay pointwise term is equally splitted on the cell $K_{j_{0}}$ and $K_{j_{0}+1}$ due to the fact that the mesh is uniform.

At this stage, it remains to compute the value of $u_{t}(\xi,t^{n})$.  We first approximate the partial time derivative by the centered difference
\[
u_{t}(\xi,t^{n}) \approx \dfrac{u_{j_{0}+\frac{1}{2}}^{n+1} - u_{j_{0}+\frac{1}{2}}^{n-1}}{2 \Delta t}
\]
Then we replace the value of $u_{j_{0}+\frac{1}{2}}^{n+1}$ and $u_{j_{0}+\frac{1}{2}}^{n-1}$ using the equation \eqref{auxiliary} and as $T = K \Delta T$ we use the saved value $v^{n+1-K} \approx u_{t}(\xi,t^{n+1}-2 \ell)$ and
$v^{n-1-K} \approx u_{t}(\xi,t^{n-1}-2 \ell)$ to obtain:
\[
u_{t}(\xi,t^{n}) \approx v^{n} = \dfrac{- \frac{\Delta x}{4}  \mu  \left(v^{n+1-K} - v^{n-1-K}) \right)+ 
	\dfrac{\left(u_{j_{0}+1}^{+,n+1} + u_{j_{0}-1}^{-,n+1} \right)}{2}- \dfrac{\left(u_{j_{0}+1}^{+,n-1} + u_{j_{0}-1}^{-,n-1} \right)}{2}}
{2 \Delta t}.
\]

We set $s = \left(\dfrac{\Delta t}{\Delta x}\right)^{2}$. Let us now summarize, the computation of the solution and the pointwise delay term in the case of a uniform mesh.
Because the definition of the numerical fluxes is not the same for the first cell and the last cell, we have to treat separately these two cells. Let us also remark that contrary to the two preceding cases,
there is no need to compute boundary values since the boundary conditions are used to construct the numerical fluxes.

\textbf{First step: for time $\boldsymbol{t \in [0,T)}$}\\
\noindent$\mbox{Initialization } \mbox{ for } j = 1,\ldots, N \,,\, u^{0}_{j} = u_{0}(x_{j}) \quad v^{0} = u_{1}(\ell/2) $\\
$ \mbox{Solution for } t = \Delta t$ \\
$\hspace*{2cm} \mbox{ for } j = 1\,,\quad  u^{1}_{1} = \dfrac{s}{2}\Big(u^{0}_{2} - 3 u^{0}_{1}\Big)+ u^{0}_{1} + \Delta t \, u_{1}(x_{1})$\\
$\hspace*{2cm} \mbox{ for } j = 2,\ldots,N-1\,,\quad  u^{1}_{j} = \dfrac{s}{2}\Big(u^{0}_{j+1} - 2 u^{0}_{j} - u^{0}_{j-1}\Big)+ u^{0}_{j} + \Delta t \, u_{1}(x_{j})$\\
$\hspace*{2cm} \mbox{ for } j = N\,,\quad  u^{1}_{N} = \dfrac{s}{2}\Big(u^{0}_{N-1} -  u^{N}_{1}\Big)+ u^{0}_{N} + \Delta t \, u_{1}(x_{N})$\\
$ \mbox{Solution for } t \in (\Delta t,T) \mbox{ i.e. for } n=1,\ldots, K-1$\\
$\hspace*{2cm} \mbox{ for } j = 1\,,\quad  u^{n+1}_{1} = s\Big(u^{n}_{2} - 3 u^{n}_{1}\Big)+ 2 u^{n}_{1} -u_{1}^{n-1}$\\
$\hspace*{2cm} \mbox{ for } j = 2,\ldots,N-1\,,\quad  u^{n+1}_{j} =s \Big(u^{n}_{j+1} - 2 u^{n}_{j} - u^{n}_{j-1}\Big)+ 2 u^{n}_{j} -u_{j}^{n-1}$\\
$\hspace*{2cm} \mbox{ for } j = N\,,\quad  u^{n+1}_{N} = s\Big(u^{n}_{N-1} -  u^{n}_{N}\Big)+ 2 u^{n}_{N} - u^{n-1}_{N} $\\
$\hspace*{2cm} \mbox{Delayed pointwise term} \quad v^{n} = \dfrac{\dfrac{\left(u_{j_{0}+1}^{n+1} + u_{j_{0}-1}^{n+1} \right)}{2}- \dfrac{\left(u_{j_{0}+1}^{n-1} + u_{j_{0}-1}^{n-1} \right)}{2}}
{2 \Delta t}.$

\textbf{Second step: for time $\boldsymbol{t \in [T,T_{f}]}$}

\noindent$\mbox{Solution for } t \in [T,T_{f}] \ i.e. \ \mbox{for } n=K,\ldots, M-1$\\
$\hspace*{2cm} \mbox{ for } j = 1\,,\quad  u^{-,n+1}_{1} = s\Big(u^{-,n}_{2} - 3 u^{-,n}_{1}\Big)+ 2 u^{-,n}_{1} -u_{1}^{-,n-1}$\\
$\hspace*{2cm} \mbox{ for } j = 2,\ldots,j_{0}-1\,,\quad  u^{-,n+1}_{j} =s \Big(u^{-,n}_{j+1} - 2 u^{-,n}_{j} - u^{-,n}_{j-1}\Big)+ 2 u^{-,n}_{j} -u_{j}^{-,n-1}$\\
$\hspace*{2cm} \mbox{ for } j = j_{0}\,,\quad  u^{-,n+1}_{j_{0}} =s \Big(u^{+,n}_{j_{0}+1} - 2 u^{-,n}_{j_{0}} - u^{-,n}_{j_{0}-1}\Big)+ 2 u^{-,n}_{j_{0}} -u_{j_{0}}^{-,n-1} - \dfrac{s \Delta x \mu v^{n-K}}{2}$\\
$\hspace*{2cm} \mbox{ for } j = j_{0}+1\,,\quad  u^{+,n+1}_{j_{0}+1} =s \Big(u^{+,n}_{j_{0}+2} - 2 u^{+,n}_{j_{0}+1} - u^{-,n}_{j_{0}}\Big)+ 2 u^{+,n}_{j_{0}+1} -u_{j_{0}+1}^{+,n-1} - \dfrac{s \Delta x \mu v^{n-K}}{2}$\\
$\hspace*{2cm} \mbox{ for } j = j_{0}+2,\ldots,N-1\,,\quad  u^{+,n+1}_{j} =s \Big(u^{+,n}_{j+1} - 2 u^{+,n}_{j} - u^{+,n}_{j-1}\Big)+ 2 u^{+,n}_{j} -u_{j}^{+,n-1}$\\
$\hspace*{2cm} \mbox{ for } j = N\,,\quad  u^{+,n+1}_{N} = s\Big(u^{+,n}_{N-1} -  u^{+,n}_{N}\Big)+ 2 u^{+,n}_{N} - u^{+,n-1}_{N} $\\
$\hspace*{2cm} \mbox{Delayed pointwise term}$\\
$\hspace*{2cm}v^{n} = \dfrac{- \frac{\Delta x}{4}  \mu  \left(v^{n+1-K} - v^{n-1-K}) \right)+ 
	\dfrac{\left(u_{j_{0}+1}^{+,n+1} + u_{j_{0}-1}^{-,n+1} \right)}{2}- \dfrac{\left(u_{j_{0}+1}^{+,n-1} + u_{j_{0}-1}^{-,n-1} \right)}{2}}
{2 \Delta t}.$

\subsubsection{\textbf{\textit{Discrete energy and CFL condition}}}
The aim of this section is to design a discrete energy that is preserved in the first step that is the free wave equation with Dirichlet and Neumann boundary condition in the context of a discretization by a finite volume method. To this end, let us define:
\begin{itemize}
	\item the discrete kinetic energy  as: 
	\begin{equation*}
		\displaystyle  E_{k}^{n} = \dfrac{1}{2} \sum_{i=1 }^{N}h_{i}\left(\dfrac{u_i^{n+1}-u_i^n}{\Delta t}\right)^2
	\end{equation*}
	\item the discrete potential energy as: 
	\begin{equation*}
		\displaystyle  E_{p}^{n} = \dfrac{1}{2} \sum_{i=0}^{N}\alpha_{i+{\frac{1}{2}}} (u_{i+1}^{n+1} - u_{i}^{n+1})(u_{i+1}^{n}- u_i^{n}).
	\end{equation*}
\end{itemize}
\begin{proposition} \label{energy-internal}
	The discrete energy is conserved for all $t = 0, \ldots, T-\Delta t$ \textit{ i.e.}
	\[
	\forall \, n = 1, \ldots, K-1 \,,\, \mathcal{E}^{n+1} = \mathcal{E}^{n} \, .
	\] 
\end{proposition}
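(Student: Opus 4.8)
The plan is to reproduce, in the finite-volume setting, the same multiplier (discrete-energy) argument already used for the boundary and internal cases. Fix $n$ with $1\le n\le K-1$, multiply the scheme \eqref{discrete-wave-pointwise} by $(u_j^{n+1}-u_j^{n-1})$ and sum over $j=1,\ldots,N$. This gives the identity
\[
\sum_{j=1}^{N} h_j\,\frac{u_j^{n+1}-2u_j^n+u_j^{n-1}}{\Delta t^2}\,(u_j^{n+1}-u_j^{n-1})
=\sum_{j=1}^{N}\Big[\alpha_{j+\frac12}(u_{j+1}^n-u_j^n)-\alpha_{j-\frac12}(u_j^n-u_{j-1}^n)\Big](u_j^{n+1}-u_j^{n-1}),
\]
and the whole point is to recognise the left-hand side as $2(E_k^{n}-E_k^{n-1})$ and the right-hand side as $-2(E_p^{n}-E_p^{n-1})$, so that adding them yields $\mathcal E^{n}=\mathcal E^{n-1}$, i.e. the asserted conservation.

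For the time term I would use the elementary identity $(a-b)(a+b)=a^2-b^2$ with $a=u_j^{n+1}-u_j^n$ and $b=u_j^n-u_j^{n-1}$, exactly as in \eqref{discrete-Ec-internal}; after dividing by $\Delta t^2$ and weighting by $h_j$ this term becomes $\sum_{j=1}^N h_j\big[\big(\tfrac{u_j^{n+1}-u_j^n}{\Delta t}\big)^2-\big(\tfrac{u_j^n-u_j^{n-1}}{\Delta t}\big)^2\big]=2(E_k^n-E_k^{n-1})$. For the flux-divergence term I would carry out a discrete summation by parts (Abel summation) in $j$: writing $D_j^{m}:=u_{j+1}^{m}-u_j^{m}$ and $w_j:=u_j^{n+1}-u_j^{n-1}$, re-indexing the $\alpha_{j-\frac12}$ sum and telescoping produces $-\sum_{j=0}^{N-1}\alpha_{j+\frac12}D_j^{n}\,(D_j^{n+1}-D_j^{n-1})$. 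Comparing with $E_p^n-E_p^{n-1}=\tfrac12\sum_{i=0}^{N}\alpha_{i+\frac12}D_i^{n}(D_i^{n+1}-D_i^{n-1})$ identifies this as $-2(E_p^n-E_p^{n-1})$.

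Combining the two computations gives $E_k^n+E_p^n=E_k^{n-1}+E_p^{n-1}$, hence $\mathcal E^{n+1}=\mathcal E^{n}$ on the range $n=1,\ldots,K-1$, which is the claim. The one place that needs a little care — and hence the only (mild) obstacle — is the bookkeeping in the summation by parts: one must keep the non-uniform weight $\alpha_{\frac12}=2/\Delta x$ of the first control volume $K_1$, and check that the boundary contribution at $x=0$ drops because the Dirichlet condition gives $u_0^n=0$ (so $w_0=0$), while the contribution at $x=\ell$ drops because the discrete homogeneous Neumann condition is encoded by $\alpha_{N+\frac12}=0$ (equivalently $F_{N+\frac12}^n=0$, $u_{N+1}^n=u_N^n$), which in turn makes the $i=N$ term of $E_p^n$ vanish so that the summation ranges match. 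There is no analytic difficulty here; the argument is exactly the finite-volume transcription of the proofs of the two preceding propositions.
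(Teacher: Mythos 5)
Your proposal is correct and follows essentially the same route as the paper's proof: multiply the scheme \eqref{discrete-wave-pointwise} by $(u_j^{n+1}-u_j^{n-1})$, sum over $j$, use the difference-of-squares identity to recognize $2(E_k^{n}-E_k^{n-1})$, and perform the discrete summation by parts on the flux term, with the boundary contributions eliminated exactly as you say (via $u_0^{n}=0$ and $\alpha_{N+\frac12}=0$), to obtain $\pm 2(E_p^{n}-E_p^{n-1})$ and hence conservation. The only differences from the paper are cosmetic (you move the flux term to the right-hand side, so your potential contribution appears with the opposite sign), and your index bookkeeping matches the paper's.
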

\begin{proof}
	The proof is similar to the continuous case: we multiply the discrete problem by the approximation of $u_t$.
	We multiply the left hand side and right hand side of \eqref{discrete-wave-pointwise} by $(u_i^{n+1}- u_i^{n-1})$ and we sum over 
	$i = 1, \ldots, N$ to obtain:
	\begin{align}
		&\sum_{i=1}^{N} h_i \bigg(\frac{u_i^{n+1}-2 u_i^n + u_i^{n-1}}{\Delta t^2} \bigg) (u_i^{n+1}-u_i^{n-1}) \nonumber \\
		&- \sum_{i=1}^{N}  \Big[\alpha_{i+{\frac{1}{2}}} (u_{i+1}^n - u_i^n) - \alpha_{i-{\frac{1}{2}}} (u_{i}^n - u_{i-1}^n)\Big](u_i^{n+1}-u_i^{n-1}) \label{Discrete-EQ-1}\\
		&= 0. \nonumber
	\end{align}\textbf{Estimation of the first term of \eqref{Discrete-EQ-1}}: 
	\begin{equation} \label{result-energy-kinetic}
		\begin{array}{lll}
			&\displaystyle\sum_{i=1}^{N} h_i \bigg(\dfrac{u_i^{n+1}-2 u_i^n + u_i^{n-1}}{\Delta t^2} \bigg) (u_i^{n+1}-u_i^{n-1})\\
			\displaystyle&=\displaystyle\sum_{i=1}^{N} h_i \bigg[\dfrac{(u_i^{n+1}- u_i^n) -( u_i^n - u_i^{n-1})}{\Delta t^2} \bigg] [(u_i^{n+1}-u_i^n) + (u_i^n -u_i^{n-1})] \\
			\displaystyle&=  \displaystyle\sum_{i=1}^{N} h_i \bigg(\dfrac{u_{i}^{n+1}-u_{i}^n}{\Delta t}\bigg)^2 -  \sum_{i=1}^{N_{max}} h_i \bigg(\dfrac{u_{i}^{n}-u_{i}^{n-1}}{\Delta t}\bigg)^{2} \\
			\displaystyle&= 2(E_k^{n}-E_k^{n-1}).
		\end{array}
	\end{equation}
	\textbf{Estimation of the second term of \eqref{Discrete-EQ-1}}:
	\begin{equation*}
		\begin{array}{lll}
			- \displaystyle \sum_{i=1}^{N} \Big[\alpha_{i+{\frac{1}{2}}} (u_{i+1}^n - u_i^n) - \alpha_{i-{\frac{1}{2}}} (u_{i}^n - u_{i-1}^n)\Big](u_i^{n+1}-u_i^{n-1}) \\
			= - \displaystyle \sum_{i=1}^{N} \alpha_{i+{\frac{1}{2}}} (u_{i+1}^n - u_i^n)(u_i^{n+1}-u_i^{n-1}) + \displaystyle \sum_{i=1}^{N} \alpha_{i-{\frac{1}{2}}} (u_{i}^n - u_{i-1}^n)(u_i^{n+1}-u_i^{n-1}).
		\end{array}
	\end{equation*}
	By translation of index $i$ of the second term of the right hand side of the above equation, we obtain:
	\begin{equation*}
		\begin{array}{lll}
			- \displaystyle \sum_{i=1}^{N} \Big[\alpha_{i+{\frac{1}{2}}} (u_{i+1}^n - u_i^n) - \alpha_{i-{\frac{1}{2}}} (u_{i}^n - u_{i-1}^n)\Big](u_i^{n+1}-u_i^{n-1}) \\
			= - \displaystyle \sum_{i=1}^{N} \alpha_{i+{\frac{1}{2}}} (u_{i+1}^n - u_i^n)(u_i^{n+1}-u_i^{n-1}) + \displaystyle \sum_{i=0}^{N-1} \alpha_{i+{\frac{1}{2}}} (u_{i+1}^n - u_{i}^n)(u_{i+1}^{n+1}-u_{i+1}^{n-1}).
		\end{array}
	\end{equation*}
	Taking into consideration that $u_0^{n} = 0$ and $\alpha_{N+\frac{1}{2}}=0$, we obtain:
	\begin{equation*}
		\begin{array}{lll}
			- \displaystyle \sum_{i=1}^{N} \Big[\alpha_{i+{\frac{1}{2}}} (u_{i+1}^n - u_i^n) - \alpha_{i-{\frac{1}{2}}} (u_{i}^n - u_{i-1}^n)\Big](u_i^{n+1}-u_i^{n-1}) \\
			=   \displaystyle \sum_{i=0}^{N} \alpha_{i+{\frac{1}{2}}} (u_{i+1}^n - u_{i}^n)[(u_{i+1}^{n+1}- u_i^{n+1})-(u_{i+1}^{n-1} -u_i^{n-1})]\\
			=  \displaystyle \sum_{i=0}^{N} \alpha_{i+{\frac{1}{2}}} (u_{i+1}^n - u_{i}^n)(u_{i+1}^{n+1}- u_i^{n+1}) - \displaystyle \sum_{i=0}^{N} \alpha_{i+{\frac{1}{2}}} (u_{i+1}^n - u_{i}^n)(u_{i+1}^{n-1} -u_i^{n-1})\\
			= 2(E_p^{n}-E_p^{n-1}).
		\end{array}
	\end{equation*}
	We finally obtain:
	\[
	\forall \, n = 1, \ldots, K-1 \,,\, \mathcal{E}^{n+1} = \mathcal{E}^{n} \, .
	\]
\end{proof}
As in the two previous cases, in order to obtain a stable numerical scheme, the following Courant-Friedrichs-Lewy, CFL, condition holds:
\[ \Delta t\leq \Delta x \ .
\]
\begin{remark}\label{remark-implcit}[Implit scheme]
We have made the choice to present the three \textbf{explicit} numerical schemes for the sake of clarity and conciseness 
although these schemes generate a CFL condition to ensure the stability.

But let us remark that to avoid the CFL condition, which represents a restriction on the time step, for the three numerical schemes presented above, and to obtain an unconditionally stable numerical scheme,
we may construct the equivalent implicit scheme by replacing 
$u^{n}_{j}$ for $j=i-1\,,\, i\,,\, i+1$ in the approximation of the space derivative by the mean value: 
\[
\dfrac{u^{n+1}_j + u^{n-1}_j}{2} \ .
\]
The definition of the kinetic energy remains unchanged whereas for the 
potential energy one has to replace the term: 
\[
\left(\dfrac{u_{j+1}^{n}-u_j^n}{\Delta x}\right) 
\left(\dfrac{u_{j+1}^{n+1}-u_j^{n+1}}{\Delta x}\right)
\]
by 
\[
\dfrac{1}{2}\left(\left(\dfrac{u_{j+1}^{n+1}-u_j^{n+1}}{\Delta x}\right)^{2} +\left(\dfrac{u_{j+1}^{n-1}-u_j^{n-1}}{\Delta x}\right)^{2}\right)
\]
for the finite difference approximation and one has to replace the term:
\[
\alpha_{i+{\frac{1}{2}}} 
(u_{i+1}^{n+1} - u_{i}^{n+1})
(u_{i+1}^{n-1}- u_i^{n-1})
\]
by 
\[
\dfrac{1}{2} \ \alpha_{i+{\frac{1}{2}}} \Big(
(u_{i+1}^{n+1} - u_{i}^{n+1})^{2} + 
(u_{i+1}^{n-1} - u_{i}^{n-1})^{2} \Big)
\]
for the finite volume method.
This will guarantee that the total energy remains positive and decreasing in both cases. 

The boundedness and the positivity of the discrete energy  is used to prove the convergence of the numerical approximation when $\Delta x$ and $\Delta t$ tends to 0 towards the strong solution for the finite difference scheme \cite{Lax-Richtmyer-1956} and the convergence towards the weak solution for the finite volume method \cite{ERG2000} and \cite{Tcheques-2018} for the 2D case.
\end{remark}

\section{Numerical experiment and validation of the theoretical results in 1D}\label{Numerical-results}
Without loss of generality (up to a spatial rescaling), for every numerical experiment, we have chosen $\ell = 1$ so that $T = 2$. We construct these experiments in order to validate the theoretical results.

We have taken $N=100$ points and $T_{f} = 200 \times T = 400$. The $CFL$ number is chosen as $CFL = 1$.
 
\subsection{The boundary case}
The initial condition must satisfy Dirichlet boundary condition at the point $x = 0$ and Neumann boundary condition at the point $x = 1$. So we have chosen: 
\[
\forall x \in \, [0,1] \,,\, u_{0}(x) = x^{2} - 2 x \,,\, u_{1}(x) = -(x^{2} - 2 x).
\]

We present firstly the numerical results for $\mu \in (0,1)$ for different values of $\mu$ on the same graphics to study the influence of this parameter. 
Figure \ref{Energy-boundary} confirms the decreasing of the energy for any $0 < \mu < 1$ although
one cannot conclude on the influence of the damping parameter $\mu$ from figures \ref{Energy-boundary-1} and \ref{Energy-boundary-2}. 

Therefore, we have decide to plot the quantity $-\ln(E(t))$ versus $t$ in figure \ref{Ln-Energy-boundary}.
Figures \ref{Ln-Energy-boundary-1} and \ref{Ln-Energy-boundary-2} show that we obtain almost an increasing straight line whose slope is the exponential decay speed $\omega$ that is for large time $t$,
one has $E(t) < C e^{-\omega t}$. 
By plotting the quantity $-\frac{\ln(E(t))}{t}$ versus $t$,  in figures \ref{Exp-Energy-boundary}, we obtain almost an horizontal line so that you may conclude that the quotient $-\frac{\ln(E(t))}{t}$ stays bounded by above and by below by a constant. This constant represents the exponential decay speed. As announced by Ammari, Chentouf and Smaoui,
in \cite{ACS1}, if we set $\mu_{0} = 3 - 2 \sqrt{2}~\simeq~0.17$, we see 
in figures \ref{Exp-Energy-boundary-1} and \ref{Exp-Energy-boundary-2}, that this speed is increasing for $\mu \in (0,\mu_{0})$ then it decreases for $\mu \in (\mu_{0},0.95)$ but it stays strictly positive. So $\mu_{0}$ is the optimal parameter to choose to stabilize the system by the boundary delay damping term.

We have then choose $\mu = 1$. This case shows a surprising but predicted behavior, see Theorem \ref{princf}. Indeed figure \ref{Energy-boundary-mu-1} shows the conservation of the discrete energy for $t \in [0,20]$. 
Thus we wanted to know how was the final profile at time $T=20$. Figure \ref{Profile-init} shows the initial profile and figure \ref{Profile-final-1} shows the profile at time
$T_{f} = 10 \times T = 20$. These two profiles are opposite  whereas the final profile at time $T_{f} = 11 \times T = 22$ are equal as seen in figure \ref{Profile-final-2}.
We have then performed several tests for different final time and for different initial condition $u_{0}\,,\, u_{1}$. The same surprising (but predicted) behavior shows up.

We continue our study by  presenting the numerical results for $\mu \in (1,2]$ . Figure \ref{Energy-boundary-sup} shows the growth of the energy for any $1 < \mu < 2$. We may conclude that the energy is increasing and the system cannot
be stabilized.


We end up this study by  presenting the numerical results for $\mu < 0 $ . Figure \ref{Energy-boundary-neg} shows the growth of the energy for any $\mu < 0$. We may conclude that the energy is increasing and the system cannot
be stabilized.


\subsection{The internal case}
The initial condition must satisfy Dirichlet boundary condition at the point $x = 0$ and at the point $x = 1$. So we have chosen: 
\[
\forall x \in \, [0,1] \,,\, u_{0}(x) = x (x-1) \,,\, u_{1}(x) = -x (x-1).
\]
The internal acting delay term acts on $[x_{i_{0}},x_{i_{1}}]$ where we have chosen $x_{i_{0}} = \frac{1}{4} \,,\, x_{i_{1}} = \frac{3}{4}$.

We present firstly the numerical results for $\mu \in (0,2)$ for different values of $\mu$ on the same graphics to study the influence of this parameter. 
Figure \ref{Energy-internal} shows that there exists $\mu_{0} \in (1.7,1.8)$ such that the energy is decreasing  for any $0 < \mu < \mu_{0}$ while
it seems that the energy is increasing for $\mu > \mu_{0}$. To be convinced by this numerical argument, we plot in figure \ref{Ln-Energy-internal} 
the quantity $-\ln(E(t))$ versus $t$. Again for any $0 < \mu < \mu_{0}$, we obtain almost an increasing straight line whose slope is the exponential decay speed $\omega$ that is for large time $t$, one has $E(t) < C e^{-\omega t}$, while for  any $\mu_{0} < \mu$, we obtain almost a decreasing straight line whose slope is the exponential growth speed $\omega$ that is for large time $t$, one has $E(t) > C e^{\omega t}$. By plotting the quantity $-\frac{\ln(E(t))}{t}$ versus $t$,  in figures \ref{Exp-Energy-internal}, 
we obtain almost an horizontal positive line for $0 < \mu <\mu_{0}$ so that you may conclude that is for large time $t$, one has $E(t) < C e^{-\omega t}$ while
we obtain an horizontal negative line for $\mu_{0}< \mu $ so that you may conclude that is for large time $t$, one has $E(t) >C e^{\omega t}$.

We present secondly the numerical results for $\mu <0$ for different values of $\mu$ on the same graphics.
Figure \ref{Energy-internal-neg} shows the growth of the energy for any $\mu < 0$. We may conclude that the energy is increasing and the system cannot
be stabilized.
%
%

\subsection{The pointwise case}
The initial condition must satisfy Dirichlet boundary condition at the point $x = 0$ and Neumann boundary condition at the point $x = 1$. So we have chosen: 
\[
\forall x \in \, [0,1] \,,\, u_{0}(x) = x^{2} - 2 x \,,\, u_{1}(x) = -(x^{2} - 2 x)
\]

We present firstly the numerical results for $\mu \in (0,2)$ for different values of $\mu$ on the same graphics to study the influence of this parameter. 
Figure \ref{Energy-pointwise} confirms the decreasing of the energy for any $0 < \mu < 2$ although
one cannot conclude on the influence of the damping parameter $\mu$ from figures \ref{Energy-pointwise-1} and \ref{Energy-pointwise-2}. 

Therefore, we have decided to plot the quantity $-\ln(E(t))$ versus $t$ in figure \ref{Ln-Energy-pointwise}.
Figures \ref{Ln-Energy-pointwise-1} and \ref{Ln-Energy-pointwise-2} show that we obtain almost an increasing straight line whose slope is the exponential decay speed $\omega$ that is for large time $t$,
one has $E(t) < C e^{-\omega t}$. By plotting the quantity $-\frac{\ln(E(t))}{t}$ versus $t$,  in figures \ref{Exp-Energy-pointwise}, we obtain almost an horizontal line so that you may conclude that the quotient $-\frac{\ln(E(t))}{t}$ stays bounded by above and by below by a constant. This constant represents the exponential decay speed. 
In figures \ref{Exp-Energy-pointwise-1} and \ref{Exp-Energy-pointwise-2}, we cannot really conclude about the variations of the exponential decay rate versus the value of the parameter $\mu$.

We have then choose $\mu = 2$. Again this case shows a surprising but predicted behavior, as in Theorem \ref{princ}. Indeed figure \ref{Energy-pointwise-mu-2} shows the conservation of the discrete energy for $t \in [0,20]$. Thus we wanted to know how was the final profile at time $T=20$. Figure \ref{Profile-init-pointwise-mu-2} shows the initial profile and figure \ref{Profile-final-pointwise-1} shows the profile at time
$T_{f} = 10 \times T = 20$. These two profiles are equal  whereas the final profile at time $T_{f} = 11 \times T = 22$ are opposite as seen in figure \ref{Profile-final-pointwise-2}. Again, we have then performed several tests for different final time and for different initial condition $u_{0}\,,\, u_{1}$. The same surprising (but predicted) behavior shows up.

We continue our study by  presenting the numerical results for $\mu > 2$ . Figure \ref{Energy-pointwise-sup} shows the growth of the energy for any $2 < \mu $.  We may conclude that the energy is increasing and the system cannot
be stabilized.

We end up this study by  presenting the numerical results for $\mu < 0 $ . Figure \ref{Energy-pointwise-neg} shows the growth of the energy for any $\mu < 0$. 
We may conclude that the energy is increasing and the system cannot
be stabilized.

\section{Conclusion and perspectives.}
Although delay effects arise in many applications and practical problems, we have {\color{black}seen in} this work that these effects could be overcame by choosing  a control law that uses information from the past (by switching or not). 

Moreover by constructing well adapted numerical experiments, one can choose the values of some parameters to optimize the decay rate of the energy.

We think that this type of control laws may also be used for the stabilization of partially damped coupled systems. This will be the future work we plan to investigate. 

\section*{Acknowledgments}
The authors would like to thank a lot the anonymous reviewers for their valuable remarks and comments which helped in improving  the article.

\bibliographystyle{plain}

\begin{thebibliography}{}

\end{thebibliography}


\begin{thebibliography}{1}
	\bibitem{amman} {\sc E.~M.~Ait Ben Hassi, K.~Ammari, S.~Boulite and L.~Maniar,}, {\em Feedback stabilization of a class of evolution equations with delay}, J. Evol. Equ., {\bf 1} (2009), 103--121.
 %
 \bibitem{Ames-1992}
	{\sc W.~F. Ames,} 
	\newblock {\em Numerical methods for partial differential equations}.
	\newblock Computer Science and Scientific Computing. Academic Press, Inc.,
	Boston, MA, third edition, 1992.
 
	\bibitem{ACS1} {\sc K.~Ammari, B.~Chentouf and N.~Smaoui,}  Note on the stabilization of a vibrating string via a switching time-delay boundary control: a theoretical and numerical study, {\em SeMA J.,} {\bf 80} (2023), 647--662.
 %
 %
	\bibitem{ACS2} {\sc K.~Ammari, B.~Chentouf and N.~Smaoui,} A qualitative and numerical study of the stability of a nonlinear time-delayed dispersive equation, {\em J. Appl. Math. Comput.,} {\bf 66} (2021), 465--491.
 %
 \bibitem{AG}  {\sc K.~Ammari and S.~Gerbi,} Interior feedback stabilization of wave equations with dynamic boundary delay, {\em Zeitschrift f\"ur Analysis und Ihre Anwendungen}, {\bf 36} (2017), 297--327.
	\bibitem{aht} {\sc K.~Ammari, A.~Henrot and M.~Tucsnak}, {\em Asymptotic behaviour of the solutions and optimal location of the actuator for the pointwise stabilization of a string,}  Asymptot. Anal., {\bf 28} (2001), 215--240.
	\bibitem{aht2} {\sc K.~Ammari, A.~Henrot and M.~Tucsnak,} Optimal location of the actuator for the pointwise stabilization of a string, {\em C. Acad. Sci. Paris, S\'erie I.,} {\bf 330} (2000), 275--280.
 %
 \bibitem{ANP} {\sc K.~Ammari, S.~Nicaise and C.~Pignotti,}
	{\em Feedback boundary stabilization of wave equations with interior delay,} Systems Control Lett., {\bf 59} (2010), 623--628.
 %
 	\bibitem{AN} {\sc K.~Ammari and S.~Nicaise}, {\em Stabilization of elastic systems by collocated feedback,} Lecture Notes in Mathematics, 2124. Springer, Cham, 2015.
%
\bibitem{ANP1}  {\sc K.~Ammari, S.~Nicaise and C.~Pignotti,} {\em Stabilization by switching time-delay,} Asymptot. Anal., {\bf 83} (2013), 263--283.
 %
	\bibitem{ANP2} {\sc  K.~Ammari, S.~Nicaise and C.~Pignotti,} Feedback boundary stabilization of wave equations with interior delay, {\em Systems and Control Letters,} {\bf 59} (2010), 623--628.
 %
	
	\bibitem{ben} {\sc A.~Bensoussan, G.~Da Prato, M.~C.~Delfour and 
		S.~K.~Mitter,} {\em Representation and control of infinite Dimensional 
		Systems.} Vol I, Birkh\"auser, 1992.
  %
	\bibitem{brezis} {\sc H.~Brezis,} {\em Analyse Fonctionnelle,} Th\'eorie et Applications, Masson, Paris, 1983.
 %
	\bibitem{Datko} {\sc R.~Datko,} {\em Not all feedback stabilized hyperbolic systems are robust with respect to small time delays in their feedbacks,} SIAM J. Control Optim., {\bf 26} (1988), 697--713.
 %
 \bibitem{Datko97} {\sc R.~Datko,}
	{\em Two examples of ill-posedness with respect to time delays revisited,}
	IEEE Trans. Automatic Control, {\bf 42} (1997), 511--515.
	\bibitem{DLP} {\sc R.~Datko, J.~Lagnese and P.~Polis,} {\em An example on the effect of time delays in boundary feedback stabilization of wave equations,} SIAM J. Control Optim., {\bf 24} (1985), 152--156.
%
	\bibitem{ERG2000}
	{\sc R.~Eymard, T.~Gallou\"{e}t, and R.~Herbin}, {\em Finite volume methods},
	in Handbook of numerical analysis, {V}ol. {VII}, vol.~VII of Handb. Numer.
	Anal., North-Holland, Amsterdam, (2000), 713--1020.
%
	\bibitem{g} {\sc M.~Gugat,} {\em Boundary feedback stabilization by time delay for one-dimensional wave equations,} IMA Journal of Mathematical Control and Information, {\bf 27} (2010), 189--203.
	\bibitem{gt} {\sc M.~Gugat and M.~Tucsnak,} {\em An example for the switching delay feedback stabilization of an infinite dimensional system: The boundary stabilization of a string,} System Control Lett., {\bf 60} (2011), 226--233.
 %
	\bibitem{JacobZwart}  {\sc B.~Jacob and H. ~Zwart,}
	{\em  Linear Port-Hamiltonian Systems on Infinite-dimensional Spaces},
	Operator Theory: Advances and Applications, {\bf 223}, Birkh\"auser,
	2012.
 %
    \bibitem{Lax-Richtmyer-1956} {\sc P. D. Lax and  R.D. Richtmyer} 
    {\em Survey of the stability of linear finite difference equations}, Comm. Pure Appl. Math., {\bf 9} (1956), 267--293.
%
	\bibitem{lions} {\sc J.~L.~Lions,} Contr\^olabilit\'e exacte, perturbations et stabilisation de syst\`emes distribu\'es. Tome 1. Contr\^olabilit\'e exacte. With appendices by E. Zuazua, C. Bardos, G. Lebeau and J. Rauch. Recherches en Math\'ematiques Appliqu\'ees, 8. Masson, Paris, 1988.
	\bibitem{linsmag} {\sc J.~L.~Lions and E.~Magenes,} {\em Probl\`emes aux limites 
		non homog\'enes et applications.} Vol 1, Dunod, Paris, 1968.
  %
	\bibitem{NPSicon} {\sc S.~Nicaise and C.~Pignotti,}
	{\em Stability and instability results of the wave equation with a delay term in the boundary or internal feedbacks,} SIAM J. Control Optim., {\bf 45} (2006), 1561--1585.

    \bibitem{NPMCSS} {\sc S. Nicaise and C. Pignotti}, {\em Stabilization of second-order evolution equations with
time delay}, Math. Control Signals Systems., {\bf 26} (2014), 563--588.
 %
	\bibitem{NVCOCV10}
	{\sc S.~Nicaise and J.~Valein,}
	{\em Stabilization of second order evolution equations with unbounded
		feedback with delay,}
	ESAIM Control Optim. Calc. Var., {\bf 16} (2010), 420--456.
 %
     \bibitem{Tcheques-2018} {\sc I. Rie\v canov\'a and A. Handlovi\v cov\'a,} {\em Study of the numerical solution to the wave equation,}
    Acta Math. Univ. Comenian. (N.S.), {\bf 87}, 2, (2018), 317--332.
%
	\bibitem{tucsnakweiss} {\sc M.~Tucsnak and G.~Weiss}, {\em Observation and control for operator semigroups.} Birkh\"auser Advanced Texts, Birkh\"auser Verlag, Basel, 2009.
	\bibitem{guo} {\sc J.~M.~Wang, B.~Z.~ Guo and M.~Krstic,}
	{\em Wave equation stabilization by delays equal to even multiplies of the wave propagation time,} SIAM J. Control Optim., {\bf 49} (2011), 517--554. 
%
	\bibitem{zuazuasc} {\sc E.~Zuazua}, {\em Switching control}, J. Eur. Math. Soc., {\bf 13} (2011), 85--117.
	\end{thebibliography}

\newpage
\begin{figure}[H]
	\centering
	\subcaptionbox{Evolution of the discrete energy for $0 < \mu \leq 0.5 \,,\, t \in [0,20]$.\label{Energy-boundary-1}}
	{\includegraphics[width=0.9\textwidth]{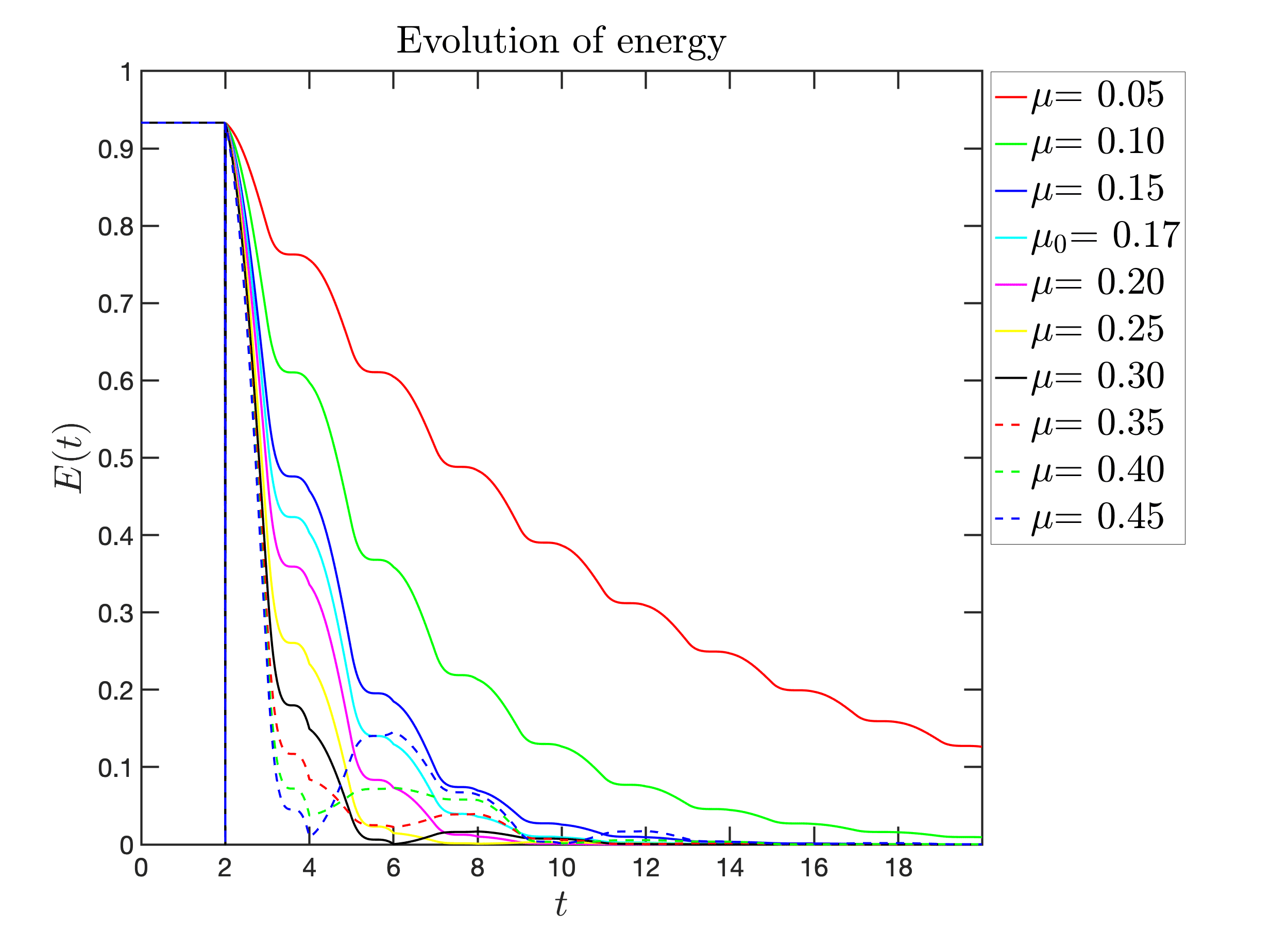}}
	\subcaptionbox{Evolution of the discrete energy for $0.5 < \mu < 1 \,,\, t \in [0,20]$.\label{Energy-boundary-2}}
	{\includegraphics[width=0.9\textwidth]{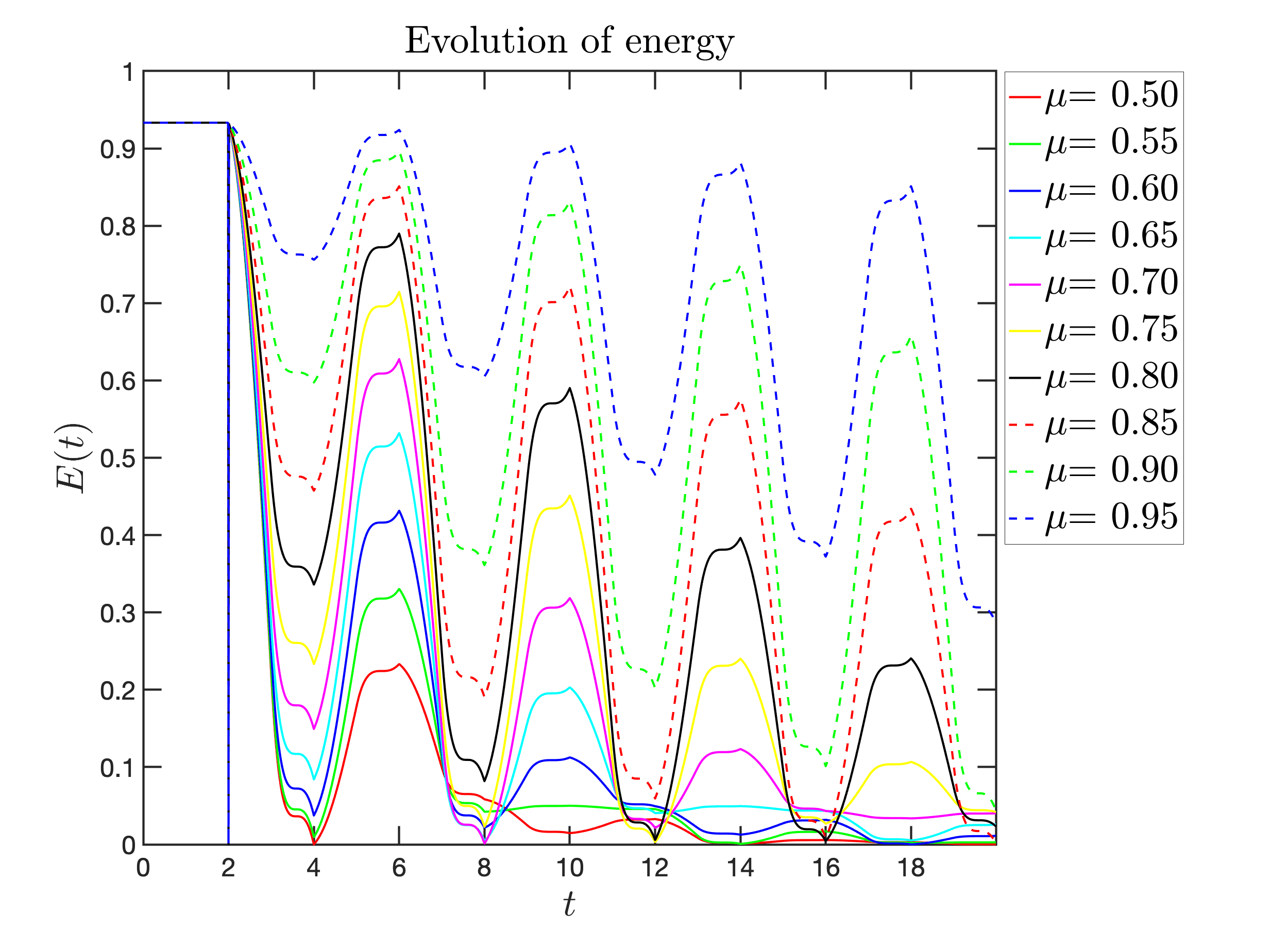}}
	\captionsetup{justification=centering}
	\caption{Boundary delayed control. Energy when $0 < \mu < 1$}
	\label{Energy-boundary}
\end{figure}

\newpage

\begin{figure}[H]
	\centering
	\subcaptionbox{Evolution of $-\ln(E(t))$ for  $0 < \mu \leq 0.5 $.\label{Ln-Energy-boundary-1}}
	{\includegraphics[width=0.9\textwidth]{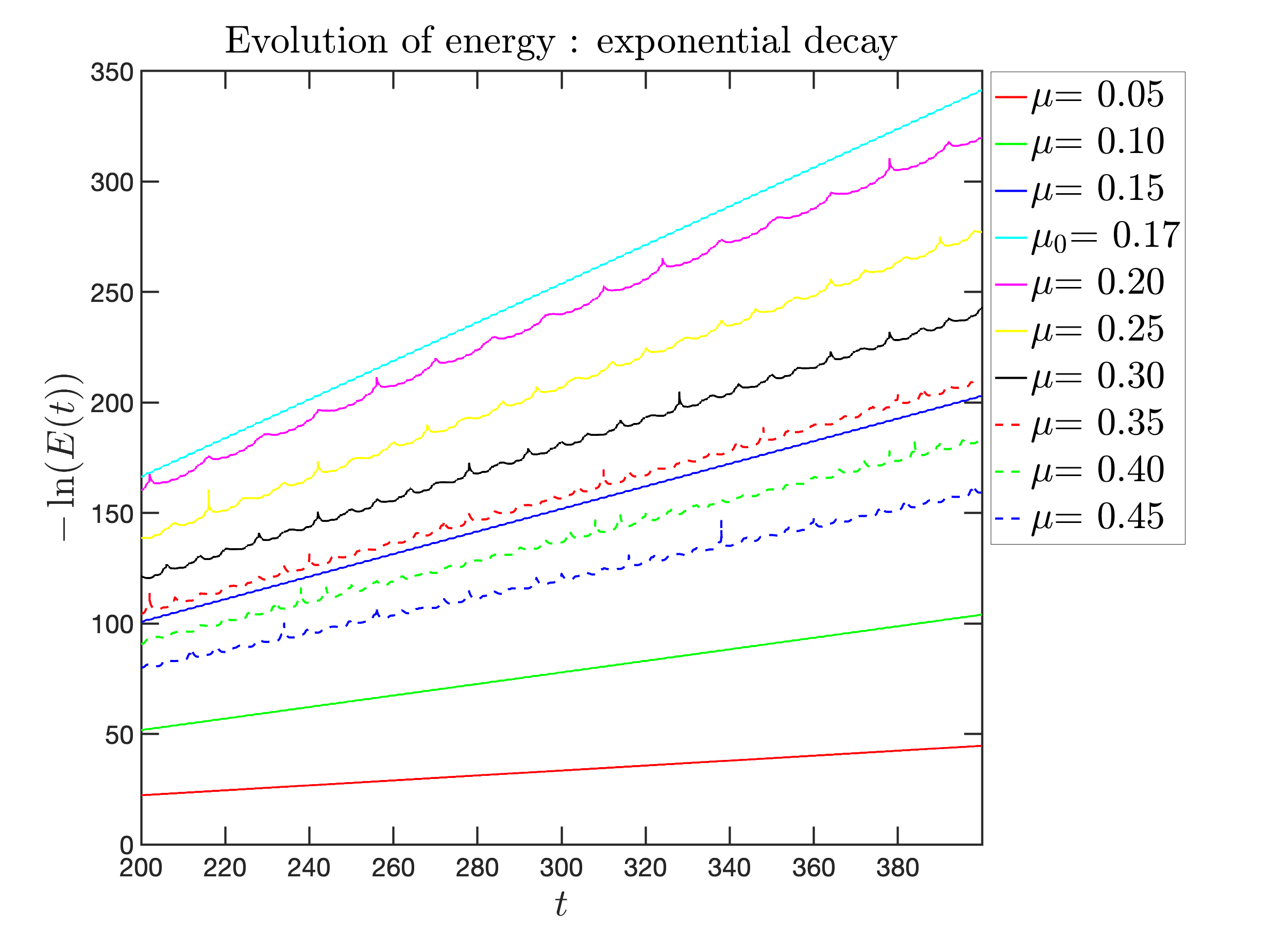}}
	\subcaptionbox{Evolution of $-\ln(E(t))$ for  $0.5 < \mu < 1$.\label{Ln-Energy-boundary-2}}
	{\includegraphics[width=0.9\textwidth]{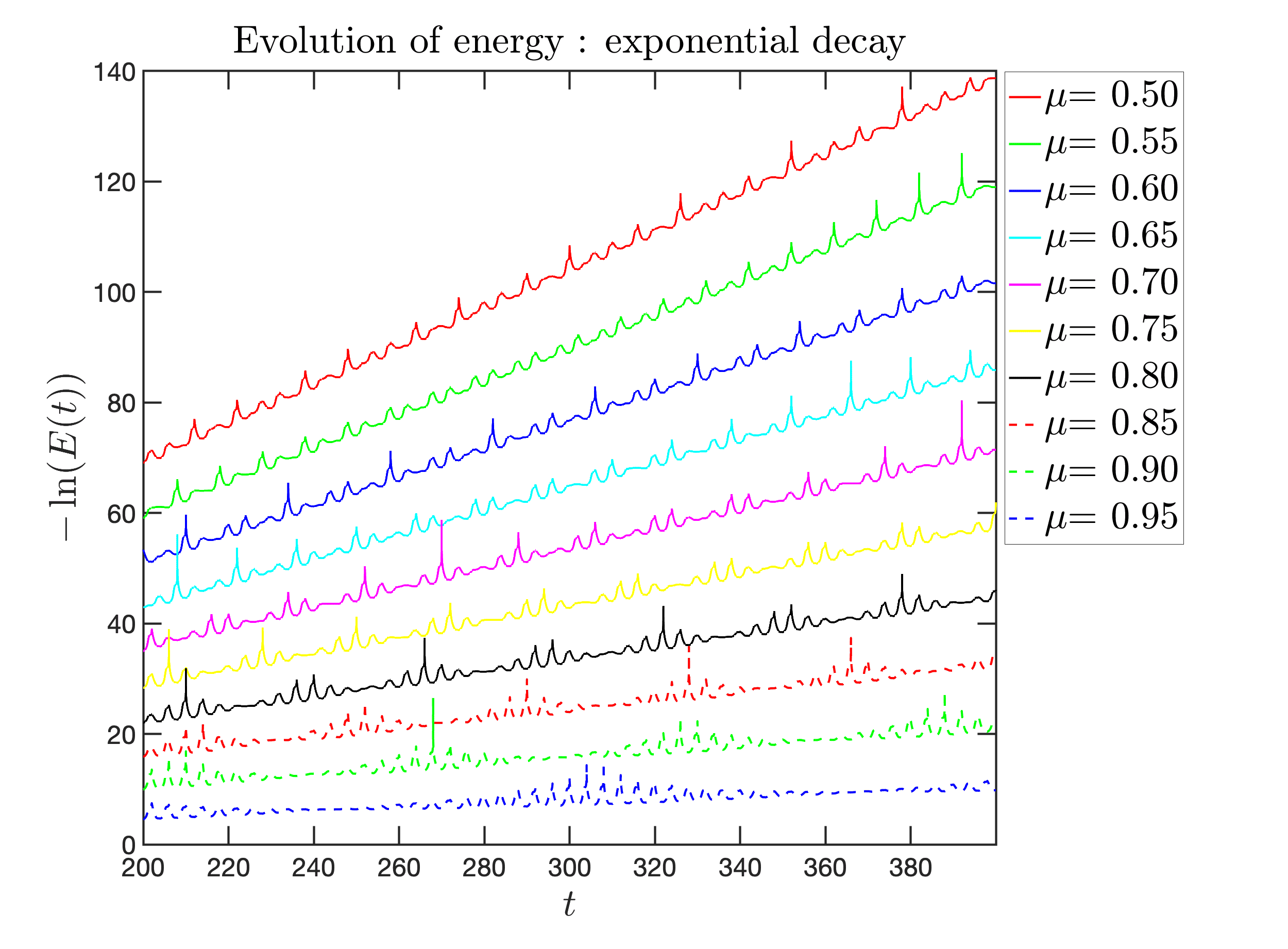}}
	\captionsetup{justification=centering}
	\caption{Boundary delayed control. Exponential decay $0 < \mu < 1$}
	\label{Ln-Energy-boundary}
\end{figure}

\newpage

\begin{figure}[H]
	\centering
	\subcaptionbox{Evolution of $- \dfrac{\ln(E(t))}{t}$ for  $0 < \mu \leq 0.5 $.\label{Exp-Energy-boundary-1}}
	{\includegraphics[width=0.9\textwidth]{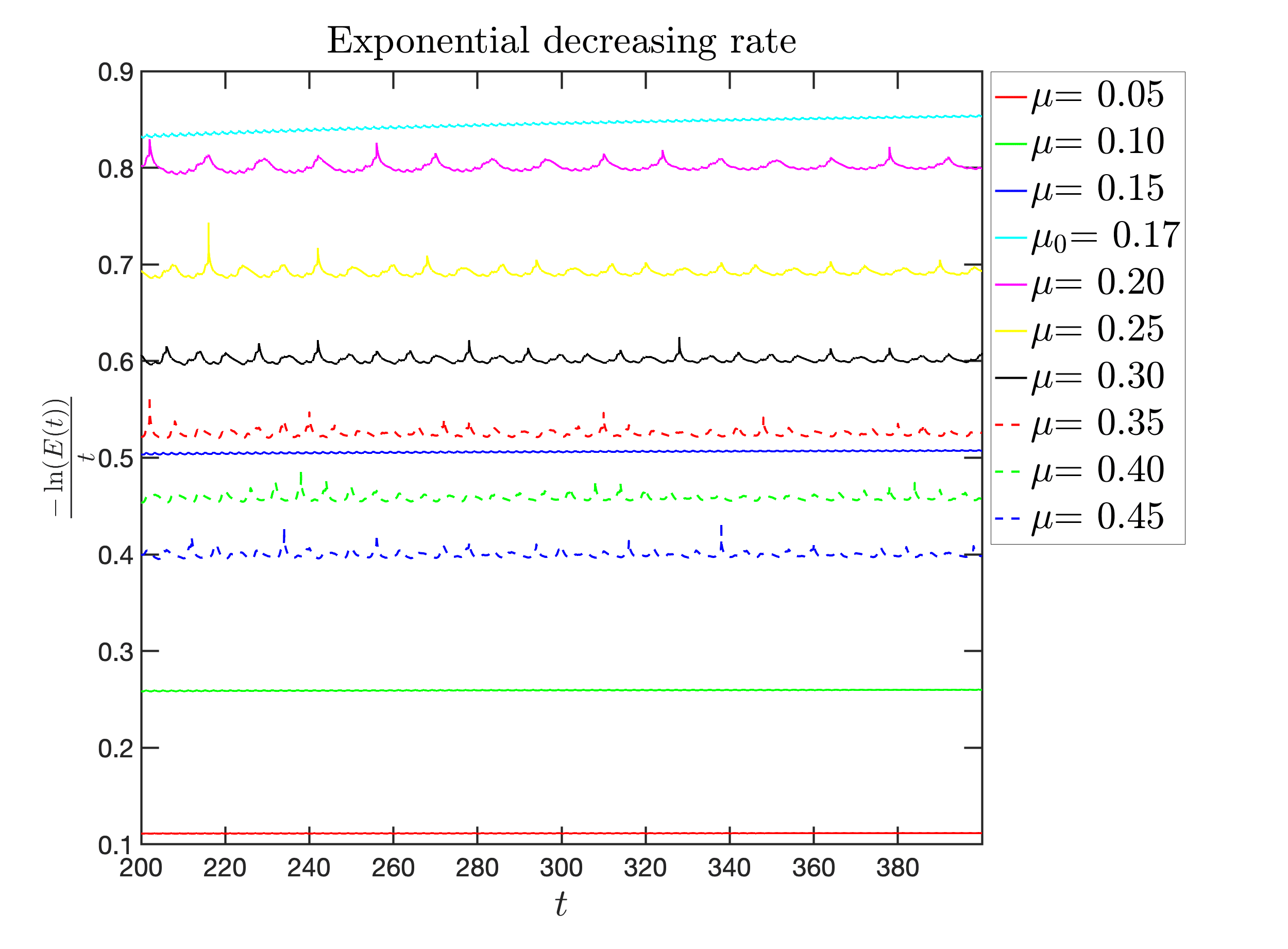}}
	\subcaptionbox{Evolution of $- \dfrac{\ln(E(t))}{t}$ for  $0.5 < \mu < 1$.\label{Exp-Energy-boundary-2}}
	{\includegraphics[width=0.9\textwidth]{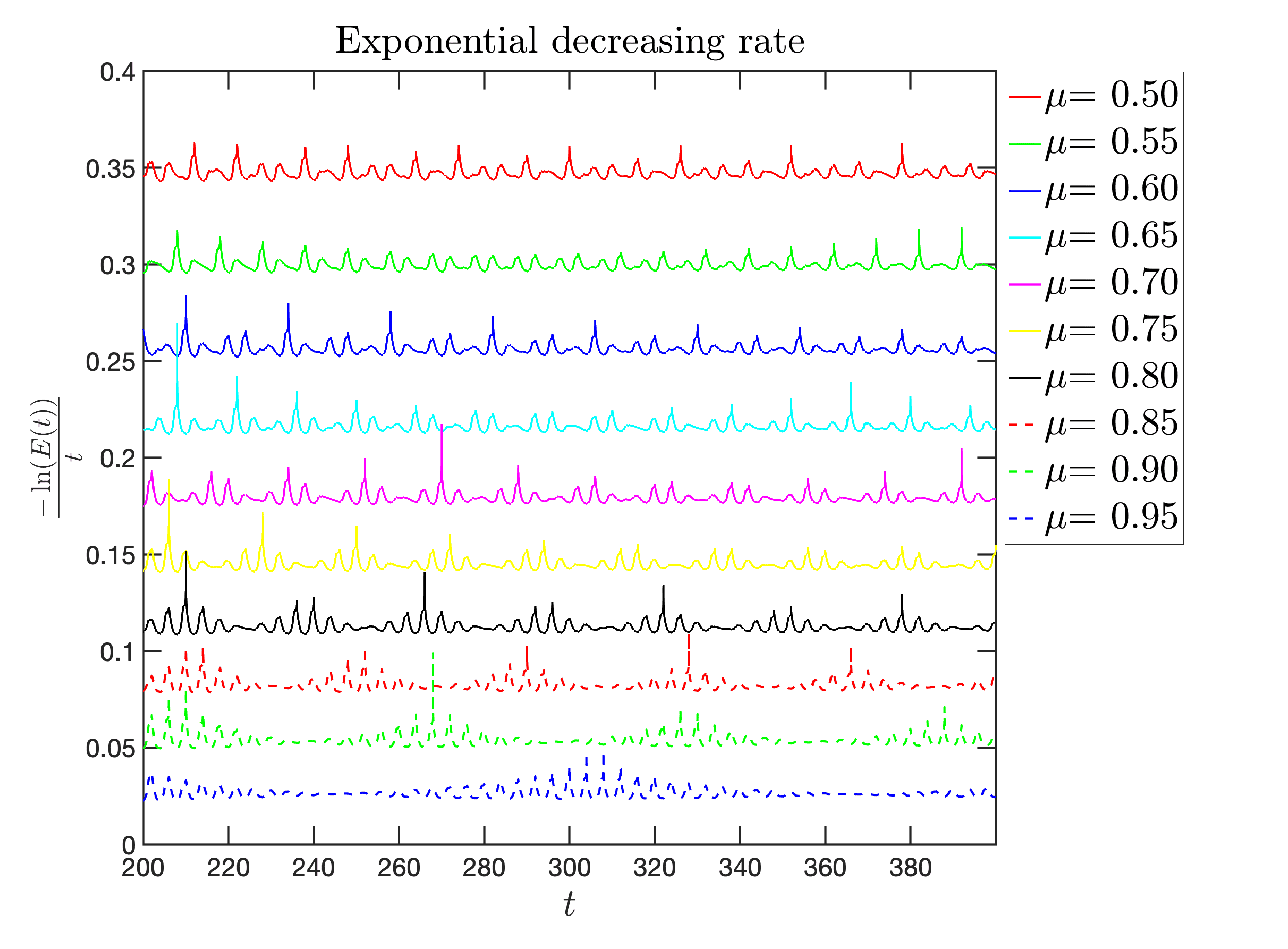}}
	\captionsetup{justification=centering}
	\caption{Boundary delayed control. Exponential decay rate $0 < \mu < 1$}
	\label{Exp-Energy-boundary}
\end{figure}

\newpage

\begin{figure}[H]
	\centering
	\subcaptionbox{Conservation of the energy for $\mu = 1$. \label{Energy-boundary-mu-1}}	
	{\includegraphics[width=0.9\textwidth]{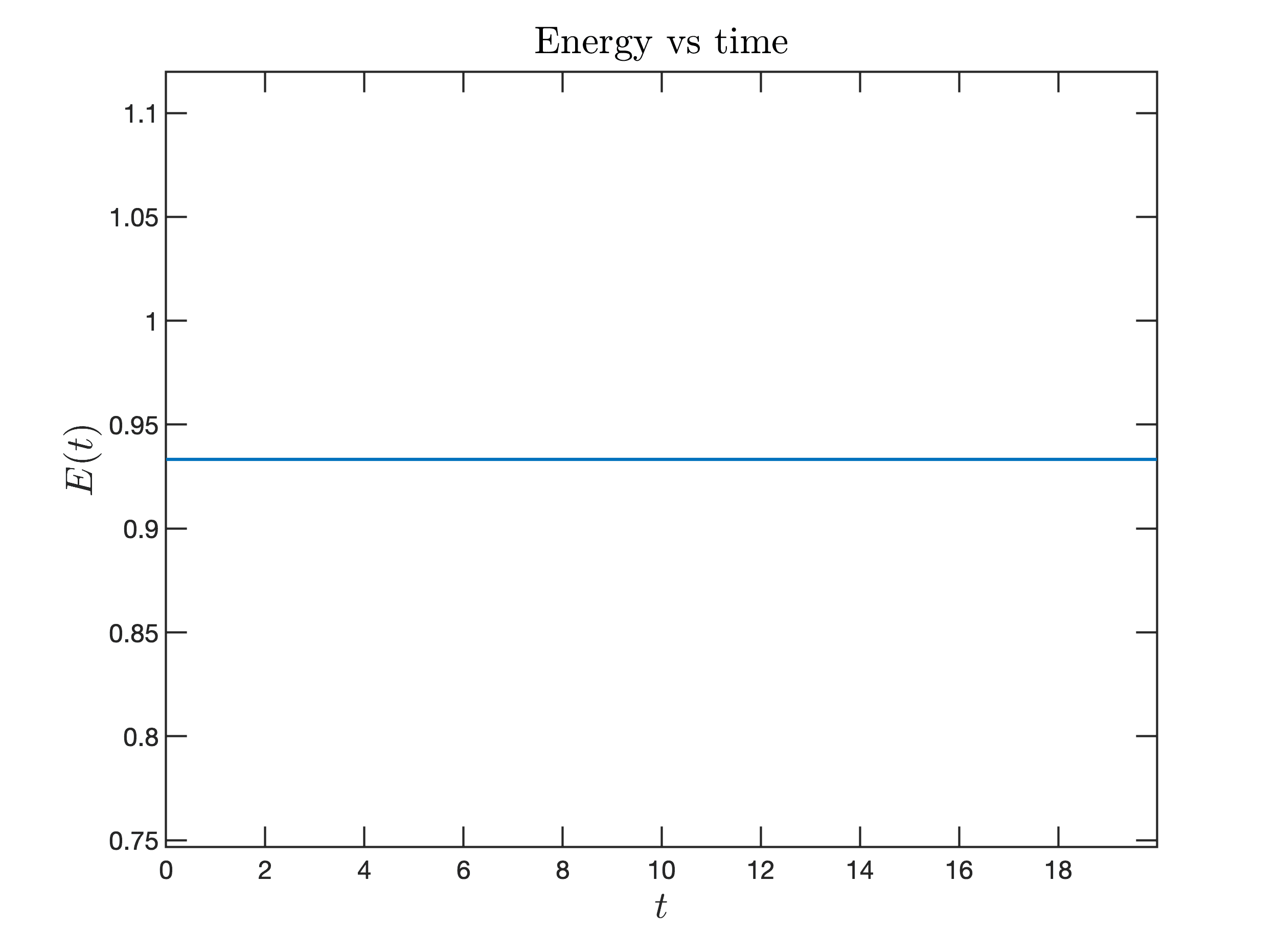}}	
	\subcaptionbox{Initial profile.\label{Profile-init}}	
	{\includegraphics[width=0.9\textwidth]{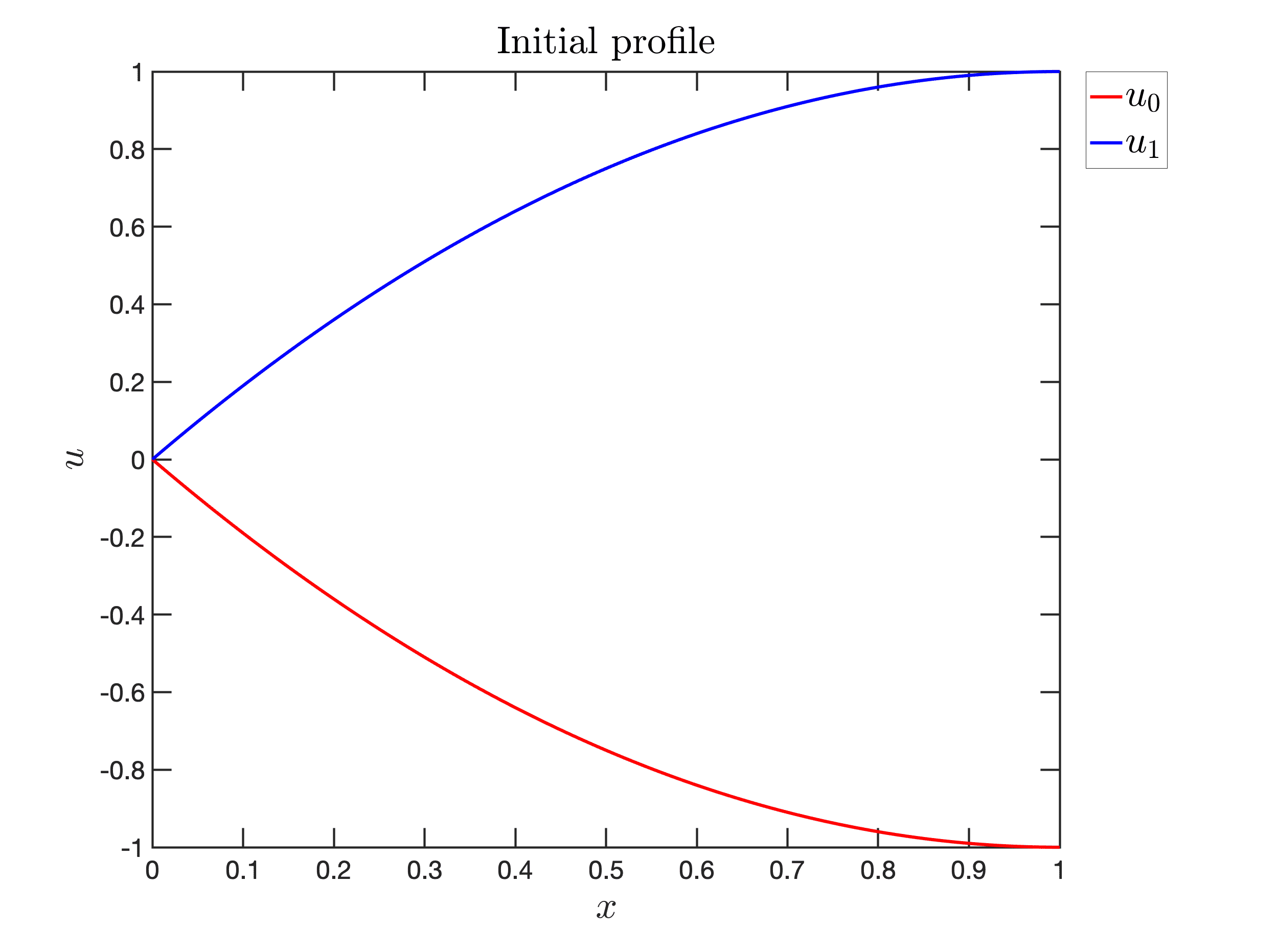}}	
	\captionsetup{justification=centering}
	\caption{Boundary delayed control. The surprising case $\mu = 1$}
	\label{Energy-boundary-mu-1-all}
\end{figure}
\newpage

\begin{figure}[H]
	\centering
	\subcaptionbox{Final profile for $T_{f} = 10 \times T = 20$\label{Profile-final-1}}
	{\includegraphics[width=0.9\textwidth]{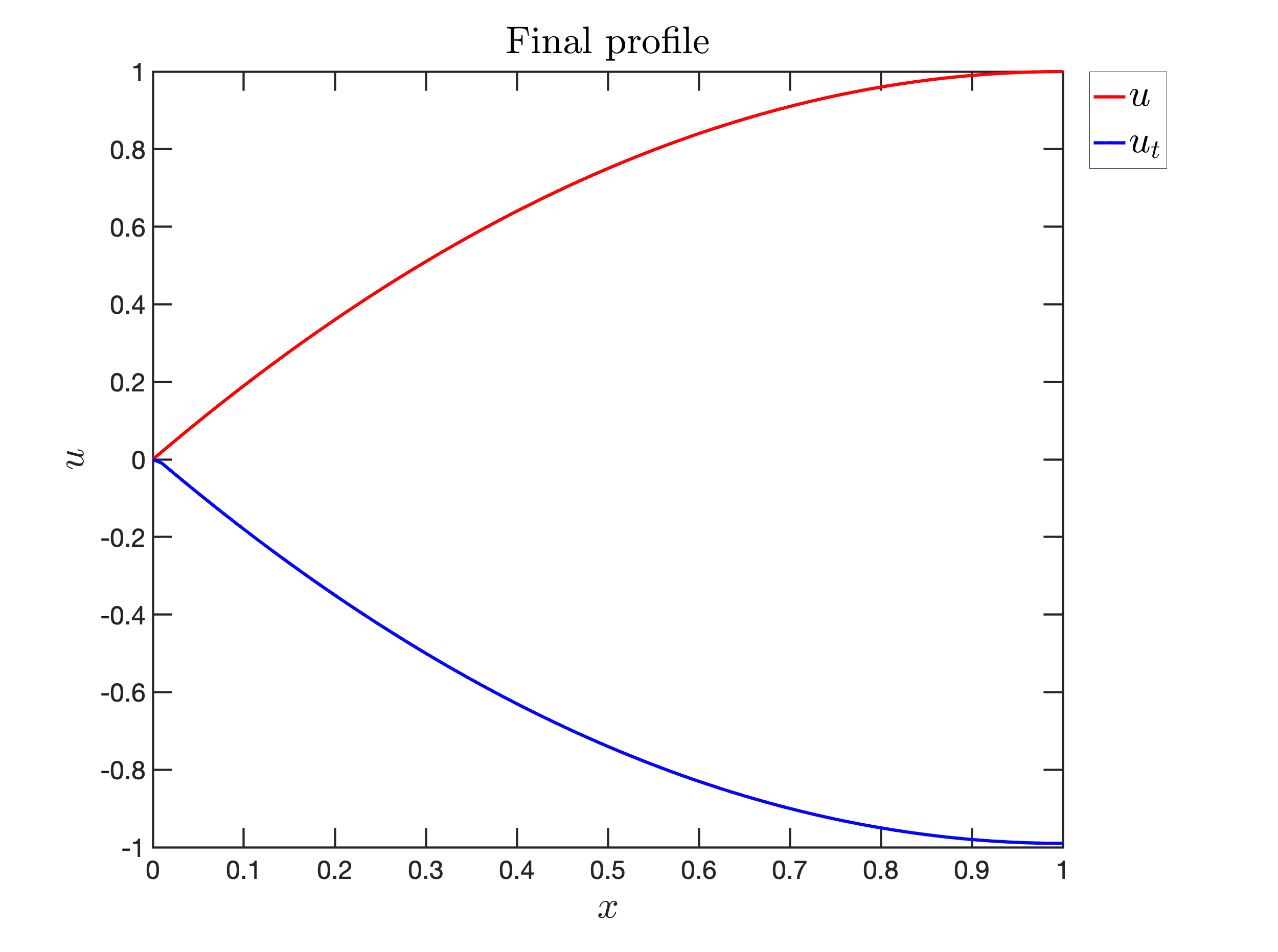}}
	\subcaptionbox{Final profile for $T_{f} = 11 \times T = 22$ .\label{Profile-final-2}}
	{\includegraphics[width=0.9\textwidth]{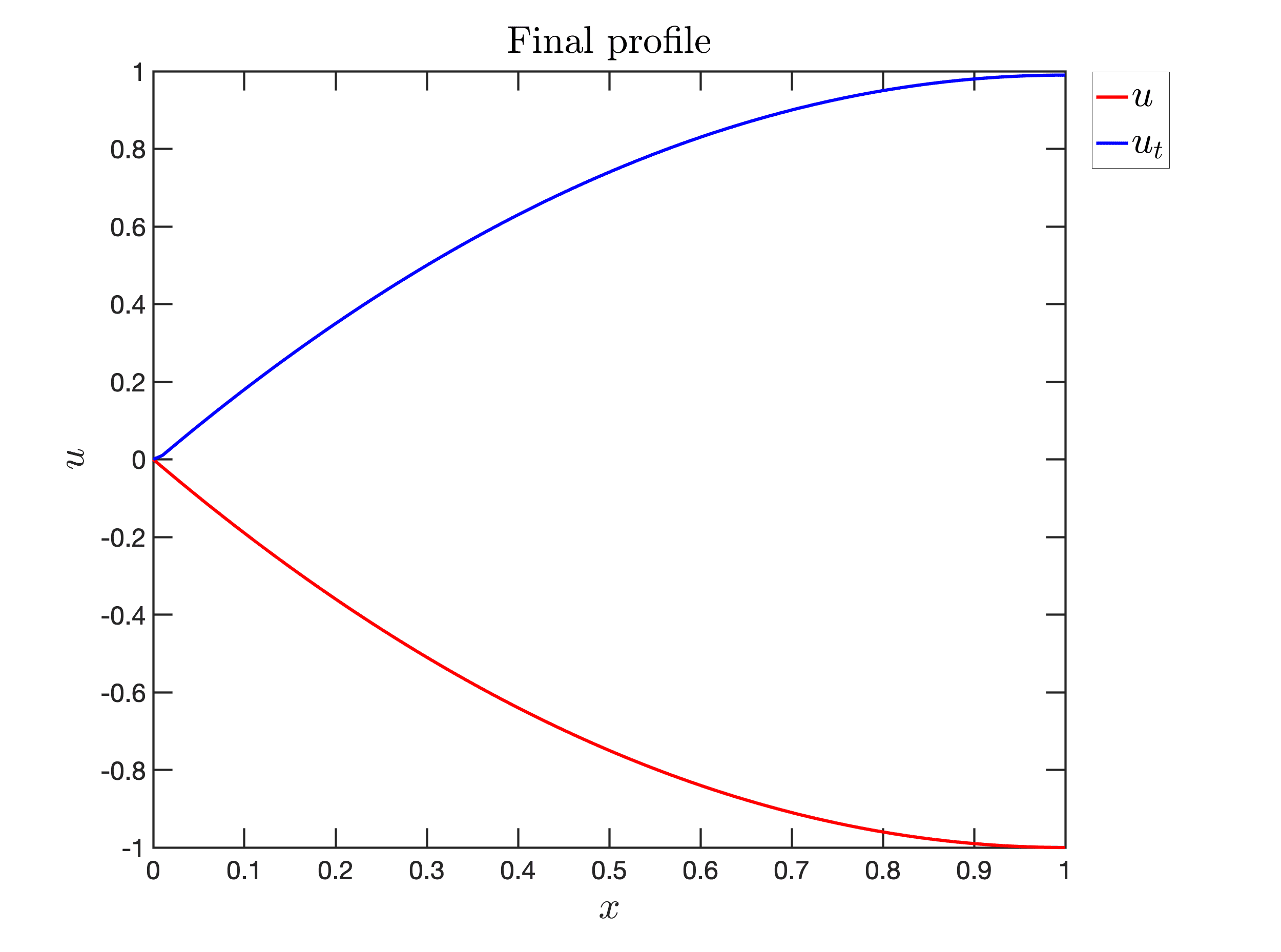}}
	\captionsetup{justification=centering}
	\caption{Boundary delayed control.Final profiles for $\mu = 1$}
	\label{Final}
\end{figure}

\newpage

\begin{figure}[H]
	\centering
	\subcaptionbox{Evolution of the discrete energy for $1 < \mu \leq 1.5 \,,\, t \in [0,20]$.\label{Energy-boundary-sup-1}}
	{\includegraphics[width=0.9\textwidth]{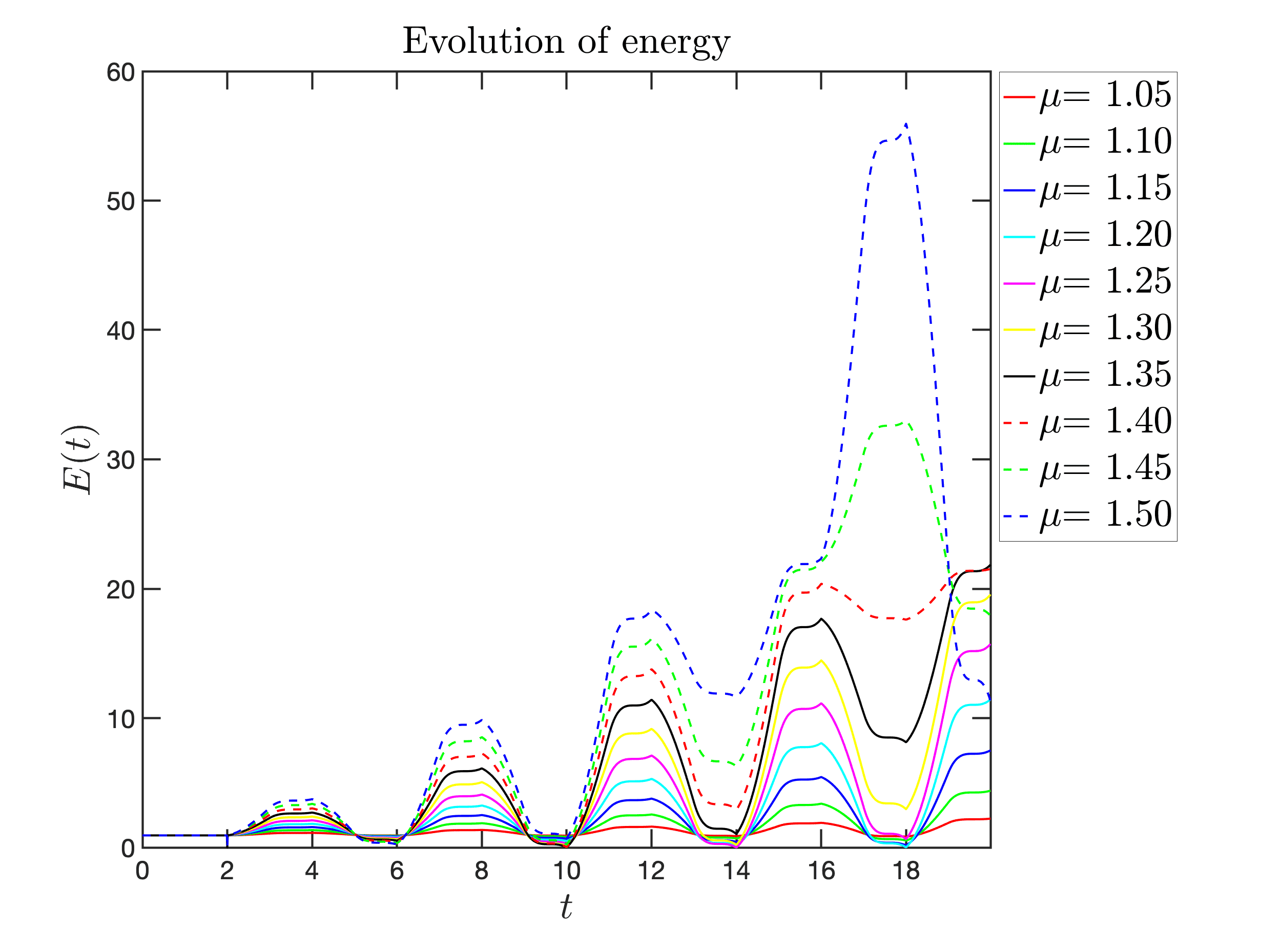}}
	\subcaptionbox{Evolution of the discrete energy for $1.5 < \mu \leq 2 \,,\, t \in [0,20]$.\label{Energy-boundary-sup-2}}
	{\includegraphics[width=0.9\textwidth]{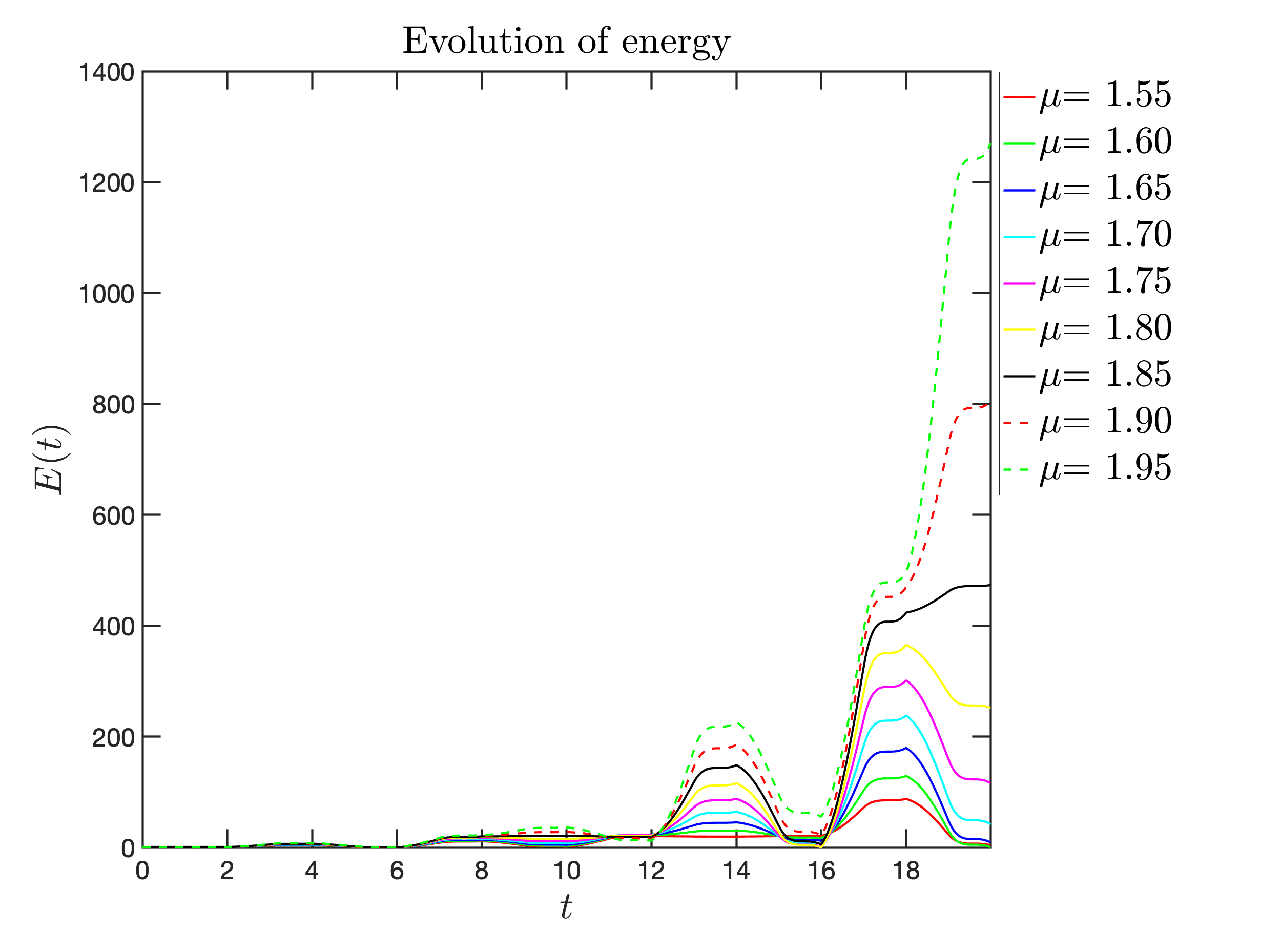}}
	\captionsetup{justification=centering}
	\caption{Boundary delayed control. Energy when $1 < \mu \leq 2$}
	\label{Energy-boundary-sup}
\end{figure}

%
%
%
%
%
%
\newpage

\begin{figure}[H]
	\centering
	\subcaptionbox{Evolution of the discrete energy for $-0.5 \leq \mu < 0 \,,\, t \in [0,20]$.\label{Energy-boundary-neg-1}}
	{\includegraphics[width=0.9\textwidth]{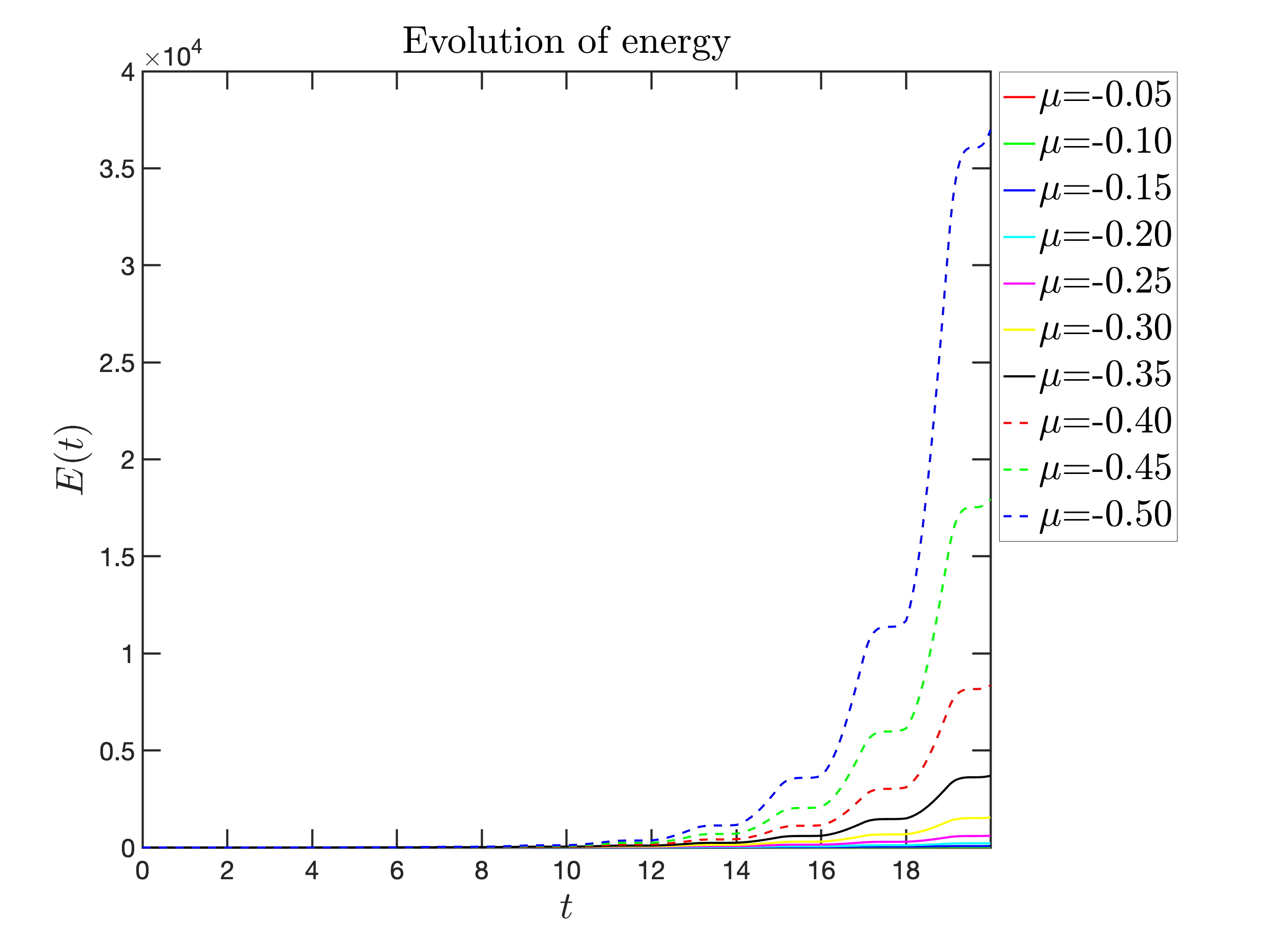}}
	\subcaptionbox{Evolution of the discrete energy for $-1 \leq \mu < -0.5 \,,\, t \in [0,20]$.\label{Energy-boundary-neg-2}}
	{\includegraphics[width=0.9\textwidth]{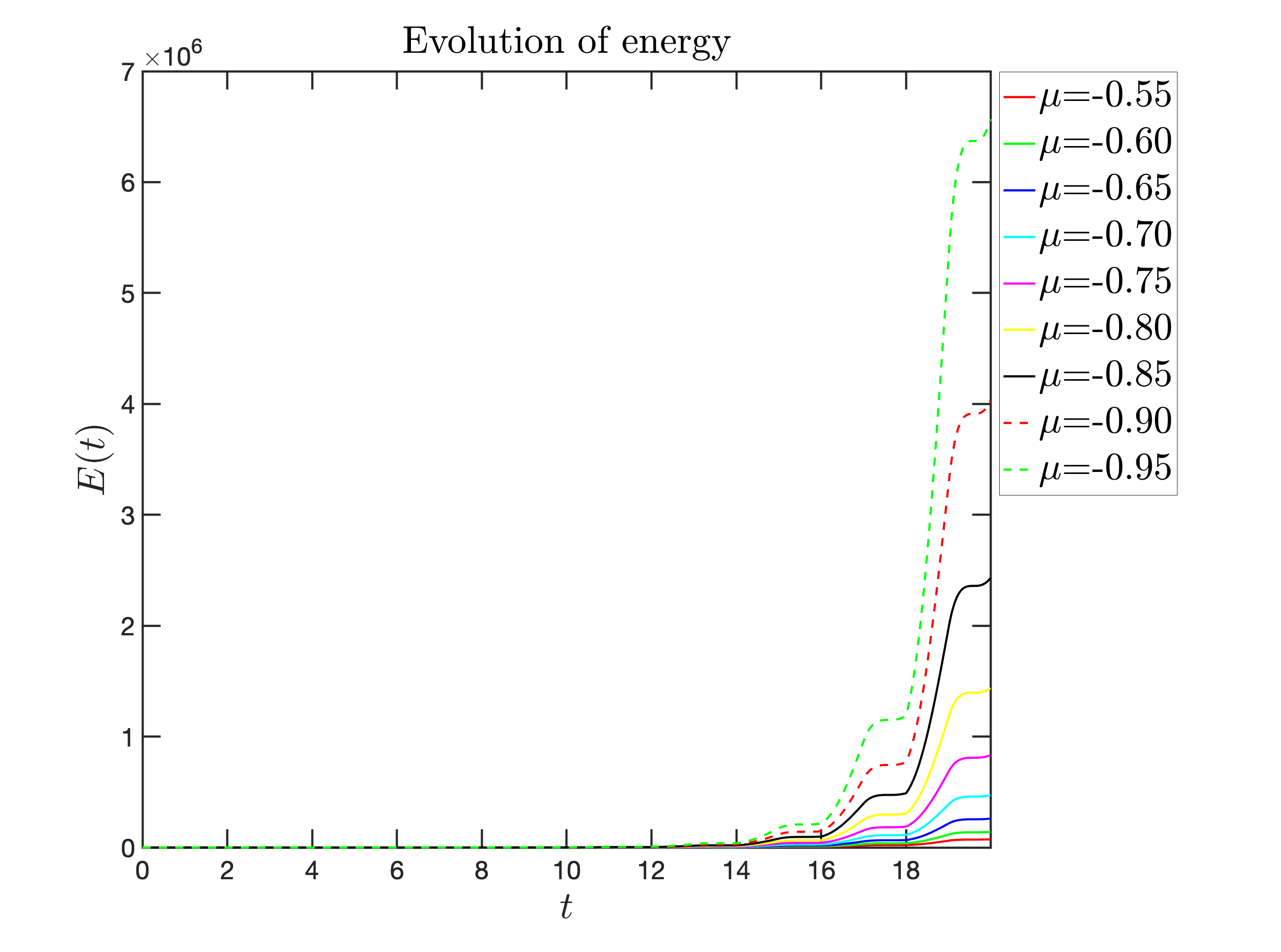}}
	\captionsetup{justification=centering}
	\caption{Boundary delayed control. Energy when $ -1 \leq  \mu < 0$}
	\label{Energy-boundary-neg}
\end{figure}


%
%
%
%

\newpage

\begin{figure}[H]
	\centering
	\subcaptionbox{Evolution of the discrete energy for $0 < \mu \leq 1 \,,\, t \in [0,20]$.\label{Energy-internal-1}}
	{\includegraphics[width=0.9\textwidth]{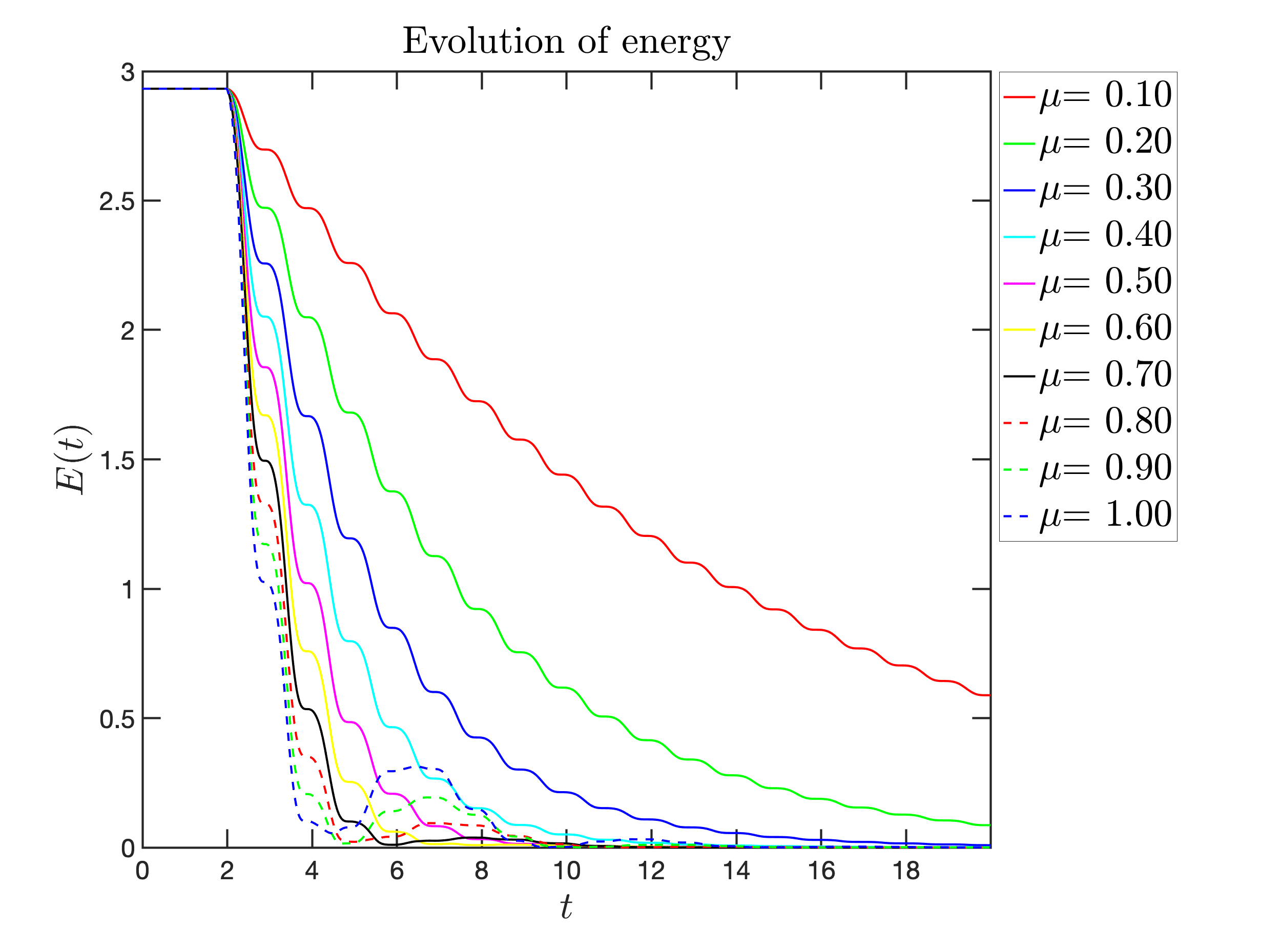}}
	\subcaptionbox{Evolution of the discrete energy for $1 < \mu < 2 \,,\, t \in [0,20]$.\label{Energy-internal-2}}
	{\includegraphics[width=0.9\textwidth]{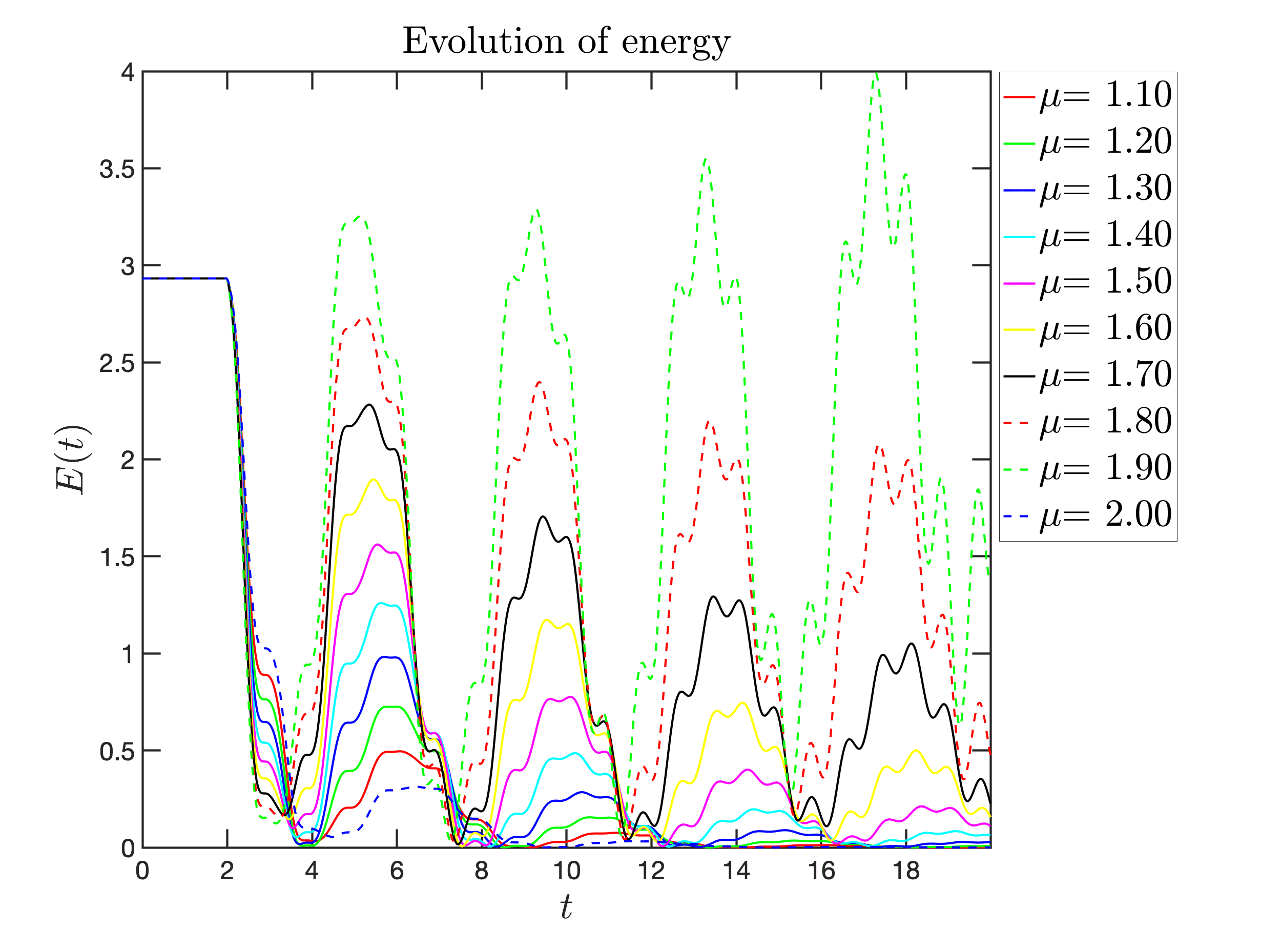}}
	\captionsetup{justification=centering}
	\caption{Internal delayed control. Energy when $0 < \mu < 2$}
	\label{Energy-internal}
\end{figure}

\begin{figure}[H]
	\centering
	\subcaptionbox{Evolution of $-\ln(E(t))$ for  $0 < \mu \leq 1 $.\label{Ln-Energy-internal-1}}
	{\includegraphics[width=0.9\textwidth]{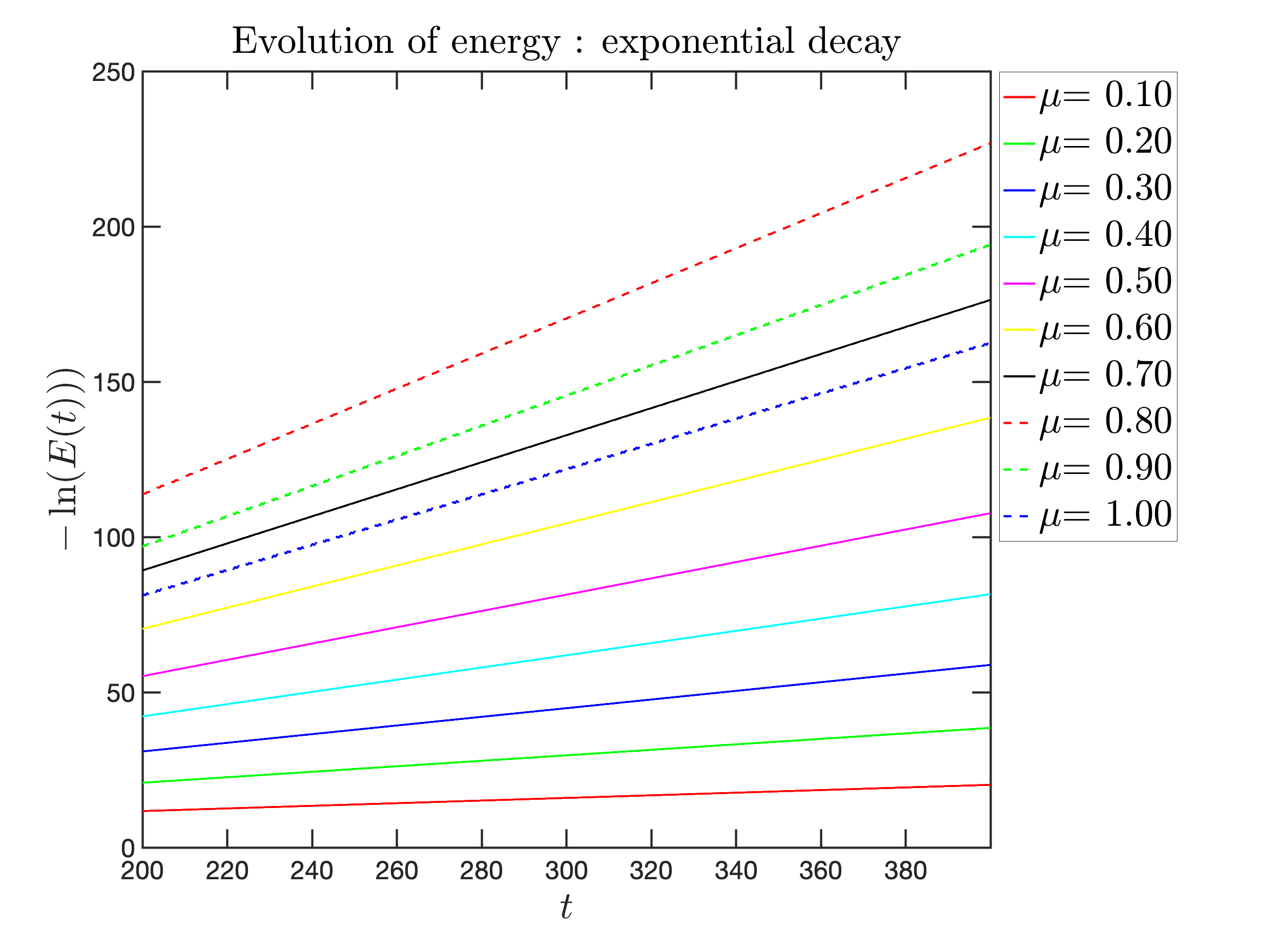}}
	\subcaptionbox{Evolution of $-\ln(E(t))$ for  $1 < \mu < 2$.\label{Ln-Energy-internal-2}}
	{\includegraphics[width=0.9\textwidth]{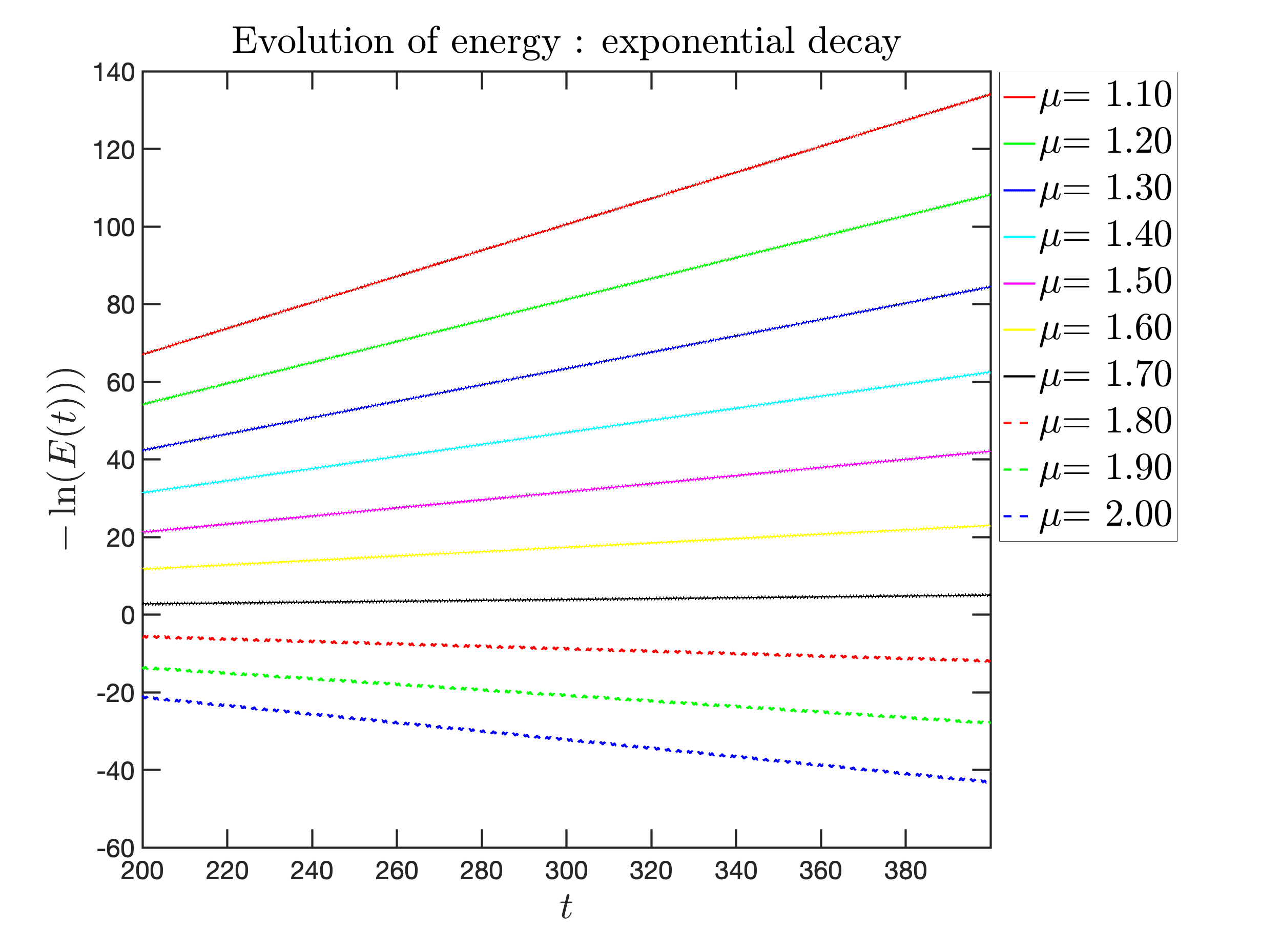}}
	\captionsetup{justification=centering}
	\caption{Internal delayed control. Exponential decay $0 < \mu < \mu_{0}$ and exponential growth $\mu_{0} < \mu$}
	\label{Ln-Energy-internal}
\end{figure}
\newpage
\begin{figure}[H]
	\centering
	\subcaptionbox{Evolution of $- \dfrac{\ln(E(t))}{t}$ for  $0 < \mu \leq 1 $.\label{Exp-Energy-internal-1}}
	{\includegraphics[width=0.8\textwidth]{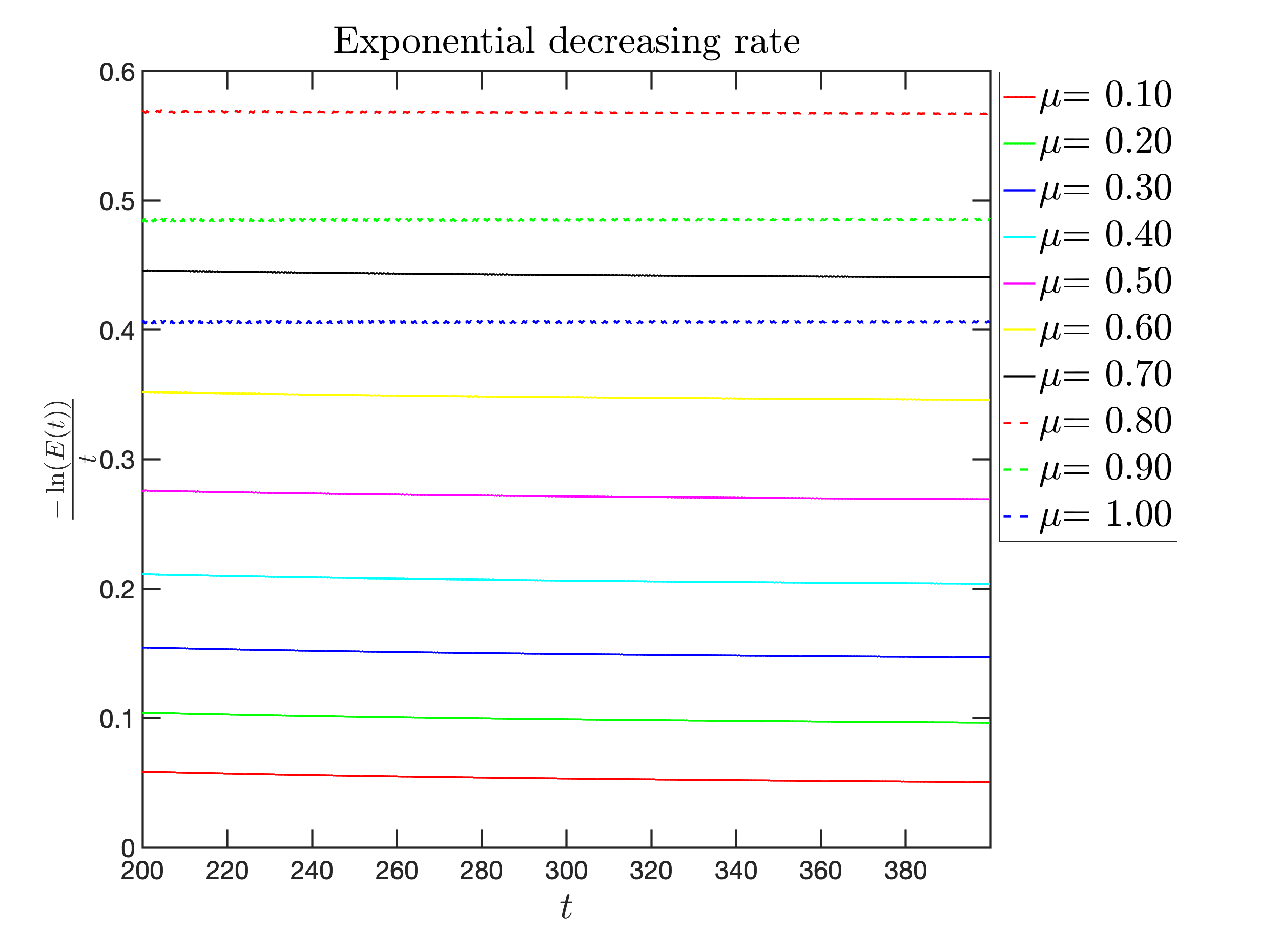}}
	\subcaptionbox{Evolution of $- \dfrac{\ln(E(t))}{t}$ for  $1 < \mu < 2$.\label{Exp-Energy-internal-2}}
	{\includegraphics[width=0.8\textwidth]{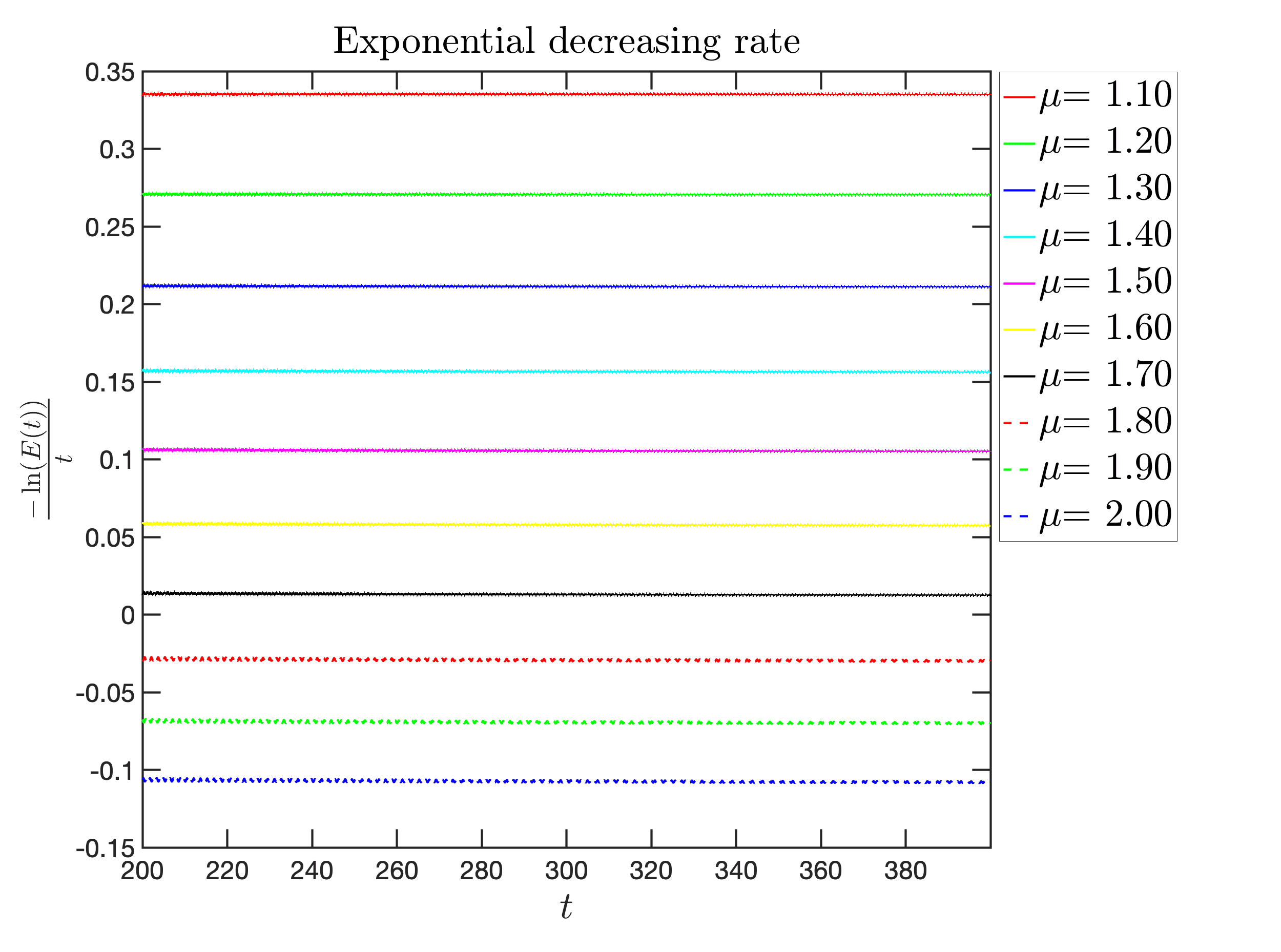}}
	\captionsetup{justification=centering}
	\caption{Internal delayed control. Exponential decay rate $0 < \mu < \mu_{0}$ and \linebreak exponential growth rate for $\mu_{0} < \mu$}
	\label{Exp-Energy-internal}
\end{figure}
\newpage
\begin{figure}[H]
	\centering
	\subcaptionbox{Evolution of the discrete energy for $-1 \leq \mu < 0 \,,\, t \in [0,20]$.\label{Energy-internal-neg-1}}
	{\includegraphics[width=0.9\textwidth]{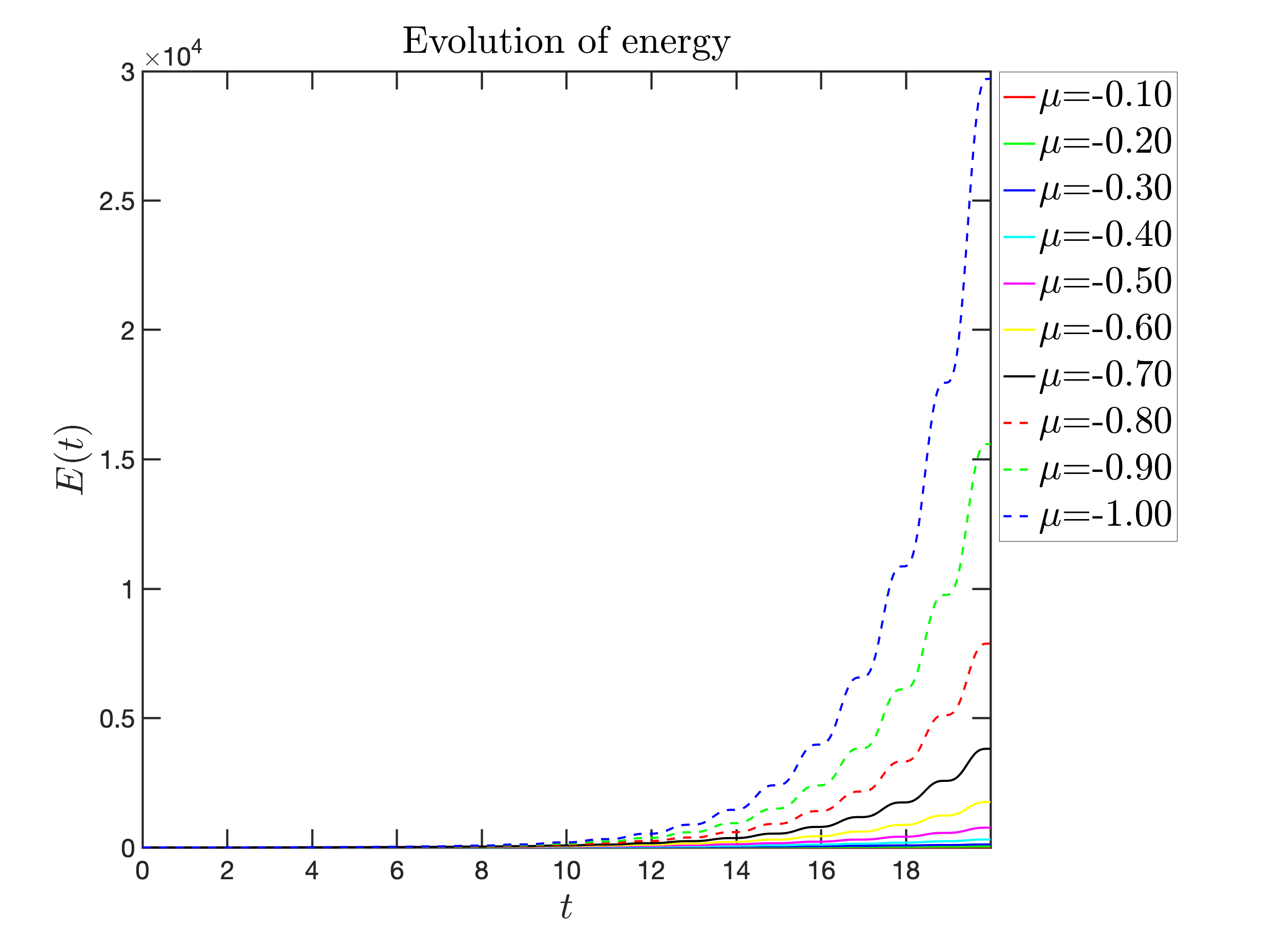}}
	\subcaptionbox{Evolution of the discrete energy for $-2 \leq \mu < -1\,,\, t \in [0,20]$.\label{Energy-internal-neg-2}}
	{\includegraphics[width=0.9\textwidth]{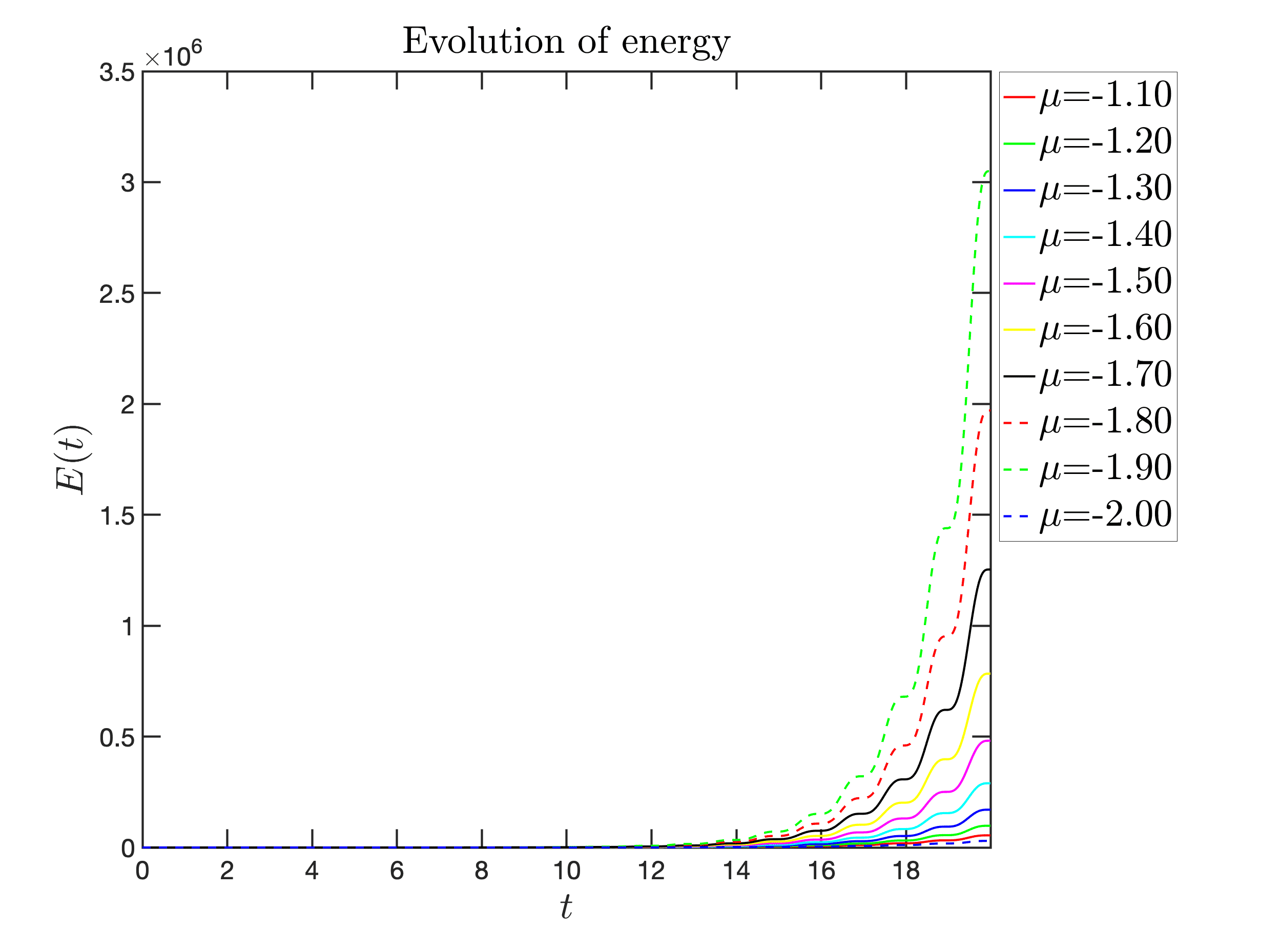}}
	\captionsetup{justification=centering}
	\caption{Internal delayed control. Energy when $ -2 \leq  \mu < 0$}
	\label{Energy-internal-neg}
\end{figure}

\newpage
%
%
%
%
%
%
%

\begin{figure}[H]
	\centering
	\subcaptionbox{Evolution of the discrete energy for $0 < \mu \leq 1 \,,\, t \in [0,20]$.\label{Energy-pointwise-1}}
	{\includegraphics[width=0.9\textwidth]{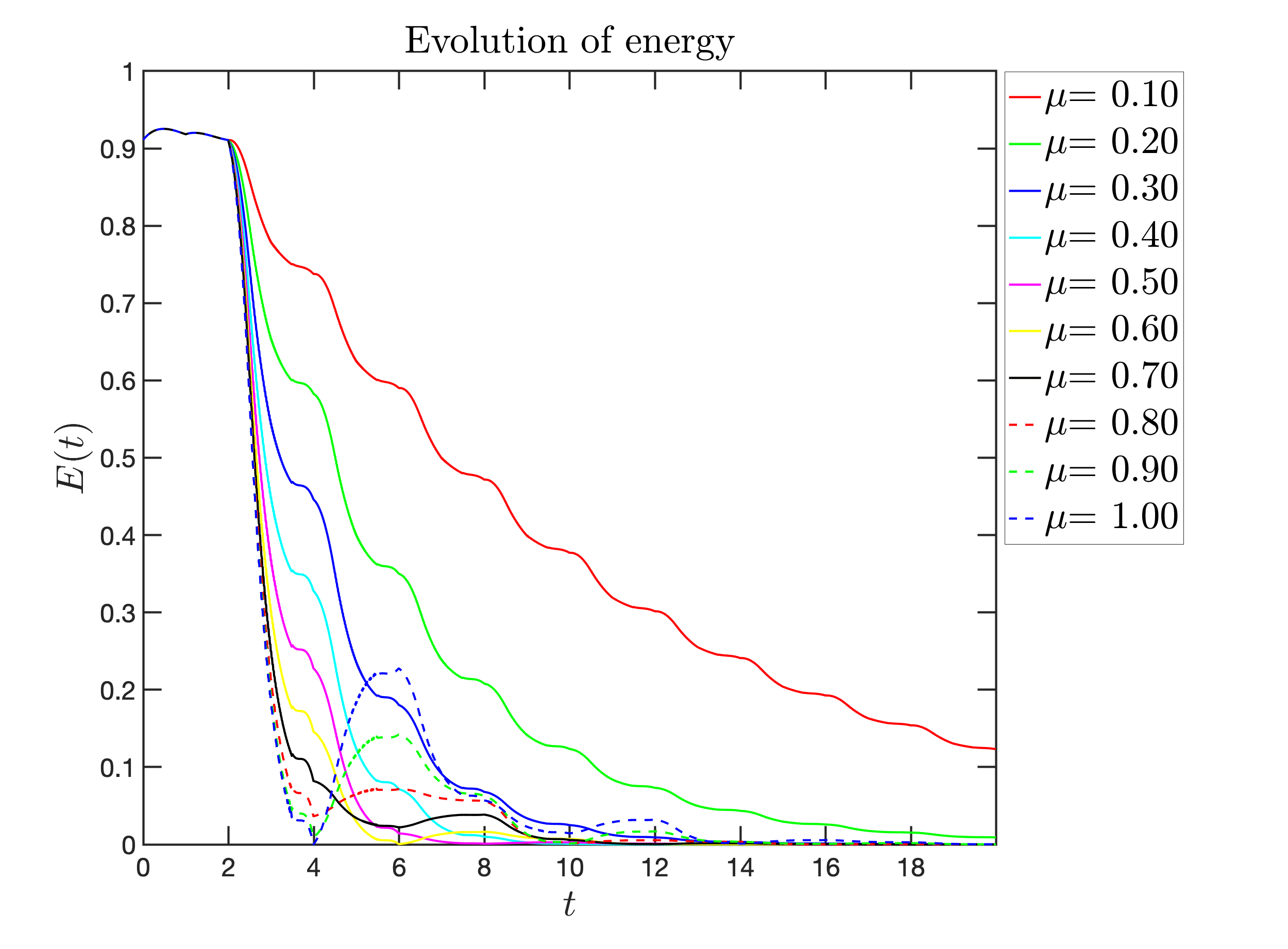}}
	\subcaptionbox{Evolution of the discrete energy for $1< \mu \leq 2 \,,\, t \in [0,20]$.\label{Energy-pointwise-2}}
	{\includegraphics[width=0.9\textwidth]{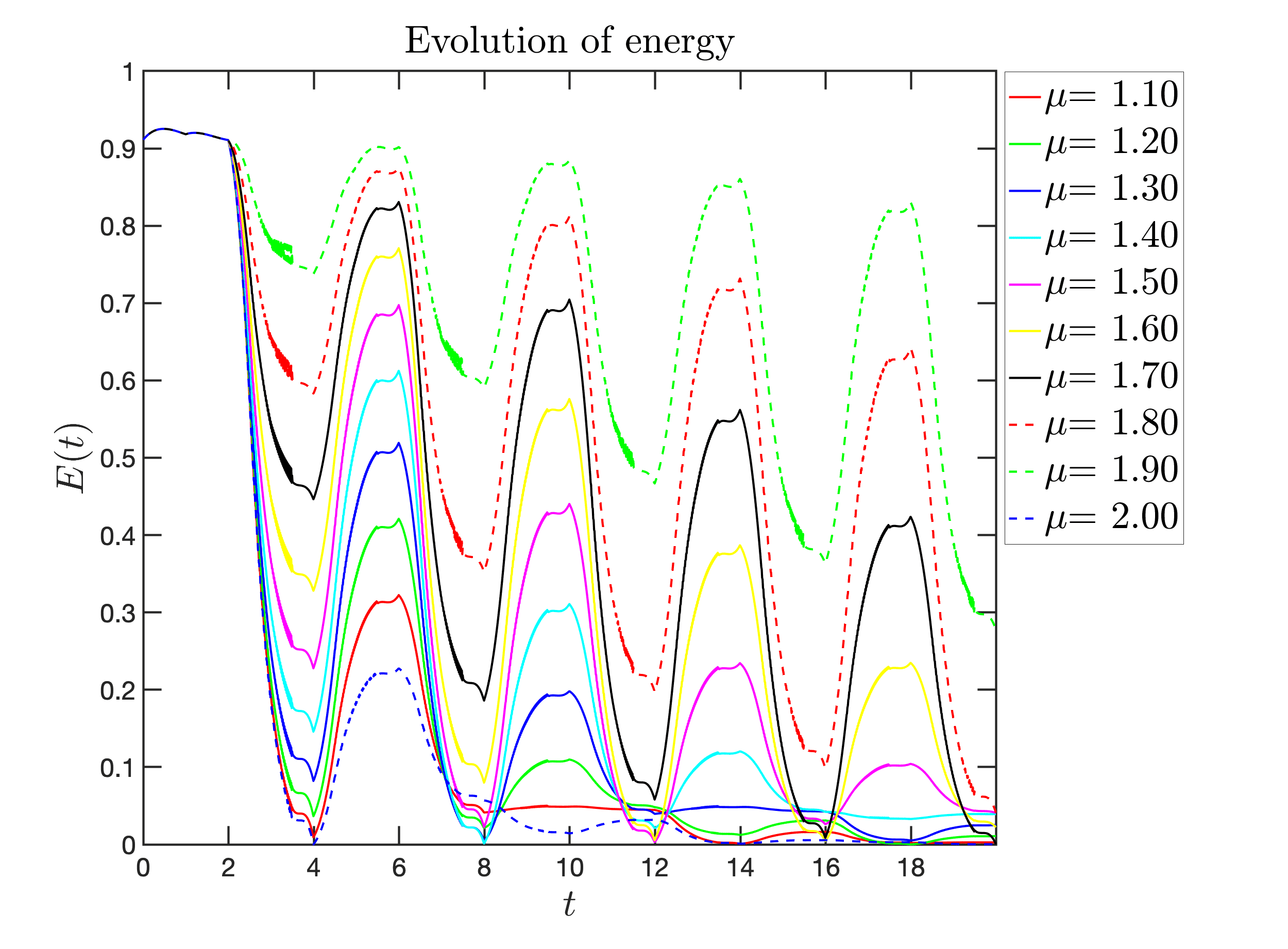}}
	\captionsetup{justification=centering}
	\caption{Pointwise delayed control. Energy when $0 < \mu \leq 2$}
	\label{Energy-pointwise}
\end{figure}

\newpage

\begin{figure}[H]
	\centering
	\subcaptionbox{Evolution of $-\ln(E(t))$ for  $0 < \mu \leq 1 $.\label{Ln-Energy-pointwise-1}}
	{\includegraphics[width=0.9\textwidth]{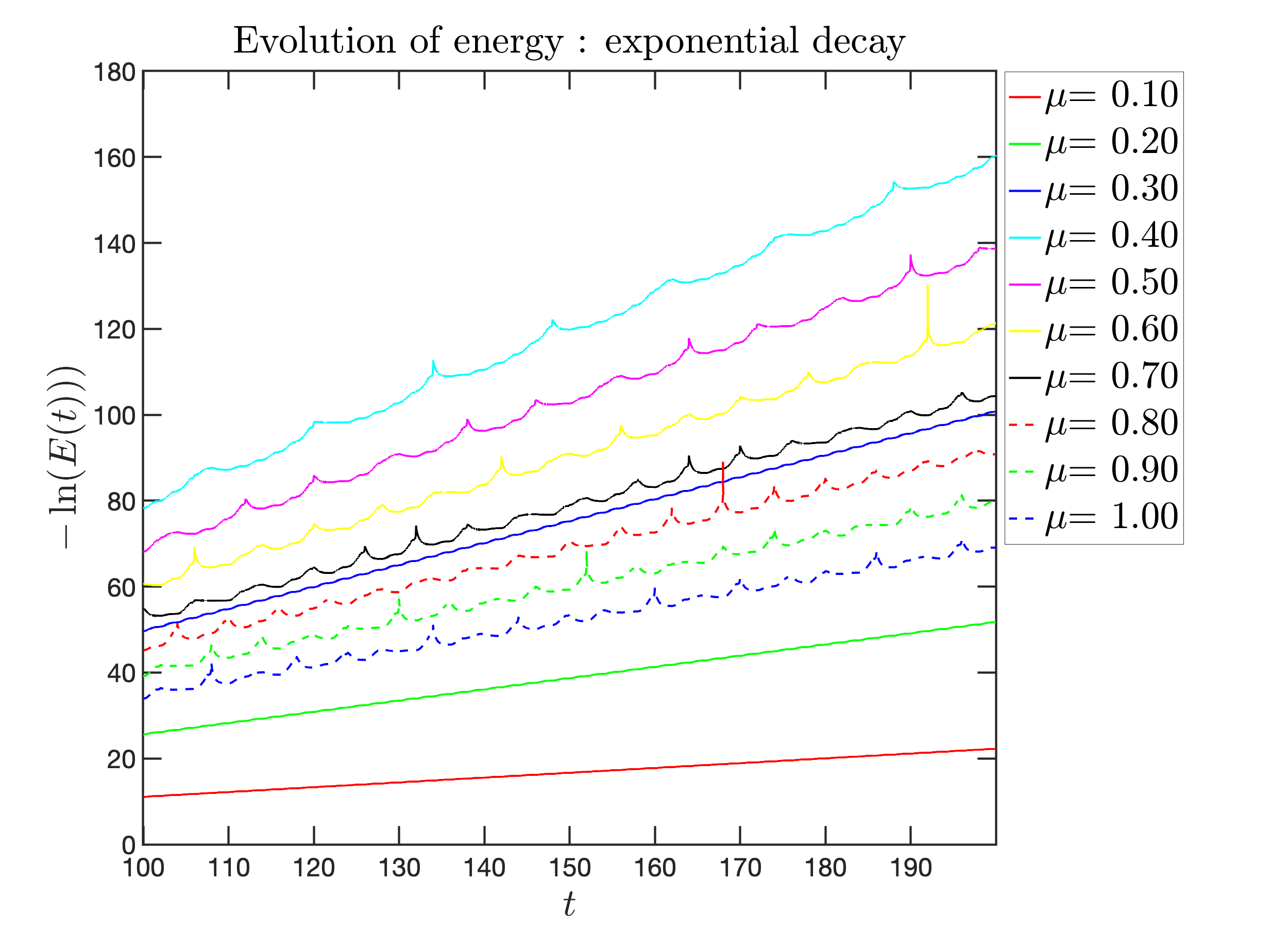}}
	\subcaptionbox{Evolution of $-\ln(E(t))$ for  $1 < \mu \leq 2$.\label{Ln-Energy-pointwise-2}}
	{\includegraphics[width=0.9\textwidth]{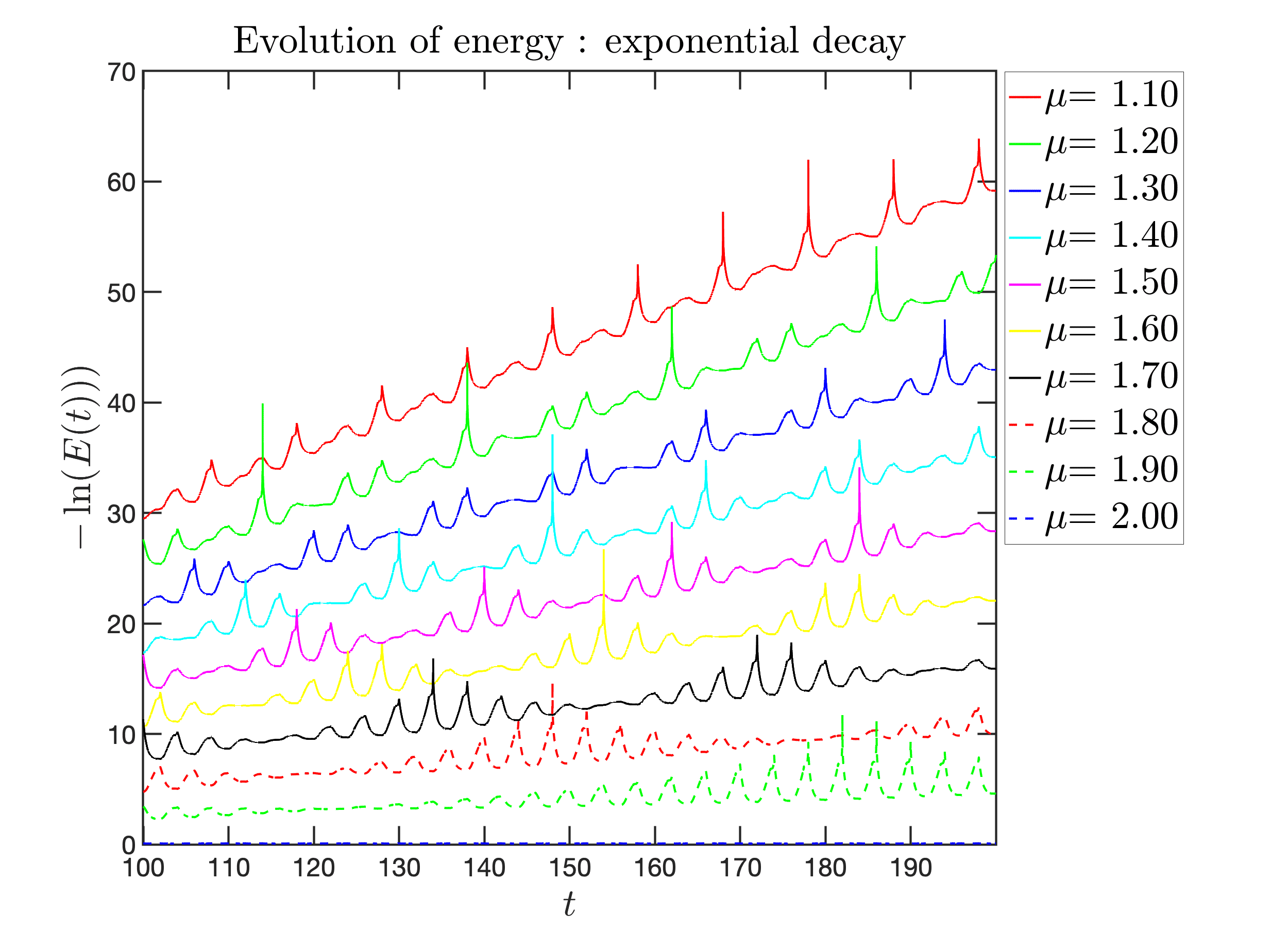}}
	\captionsetup{justification=centering}
	\caption{Pointwise delayed control. Exponential decay $0 < \mu \leq 2$}
	\label{Ln-Energy-pointwise}
\end{figure}

\newpage

\begin{figure}[H]
	\centering
	\subcaptionbox{Evolution of $- \dfrac{\ln(E(t))}{t}$ for  $0 < \mu \leq 1 $.\label{Exp-Energy-pointwise-1}}
	{\includegraphics[width=0.9\textwidth]{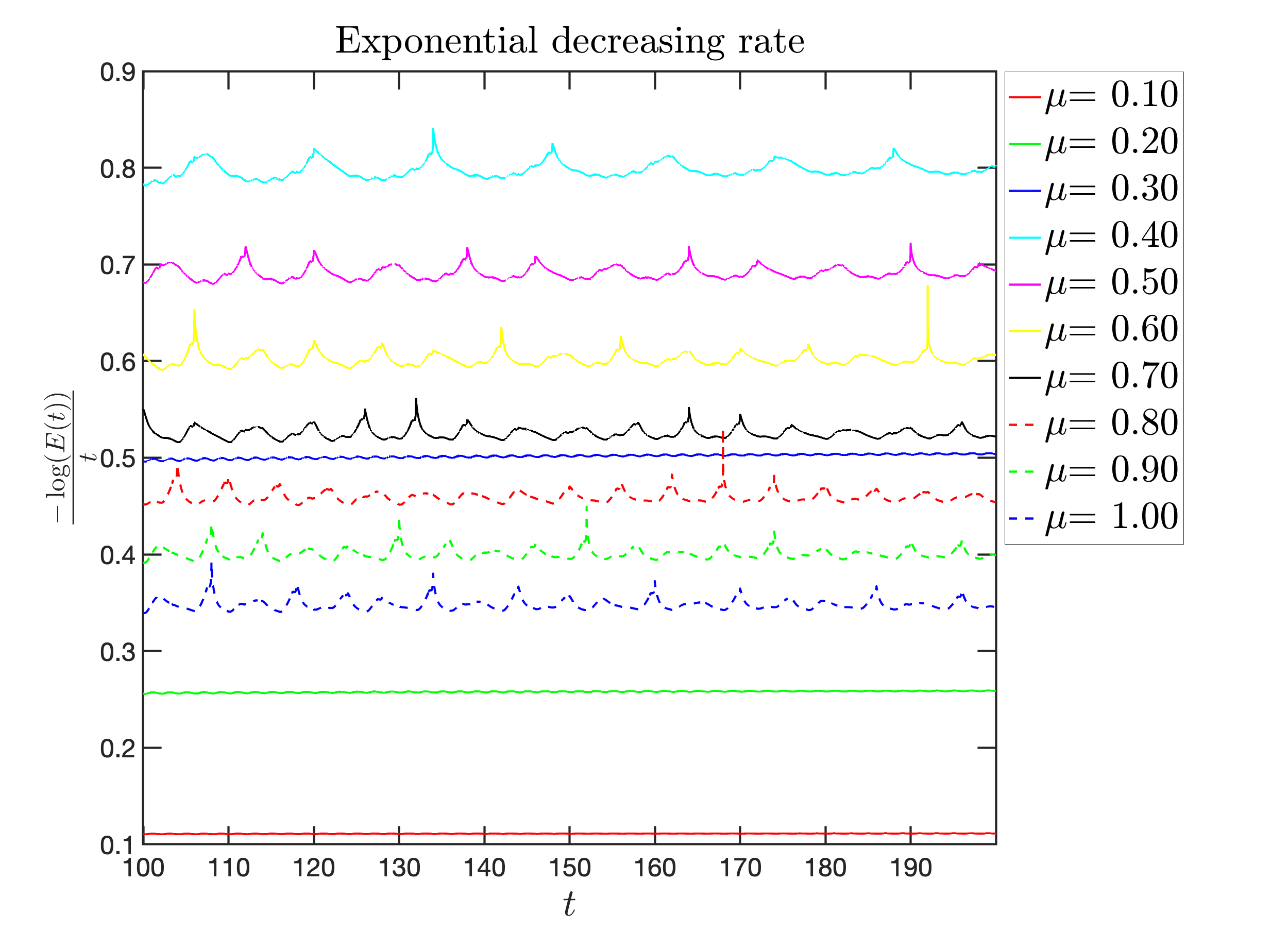}}
	\subcaptionbox{Evolution of $- \dfrac{\ln(E(t))}{t}$ for  $1< \mu \leq 2$.\label{Exp-Energy-pointwise-2}}
	{\includegraphics[width=0.9\textwidth]{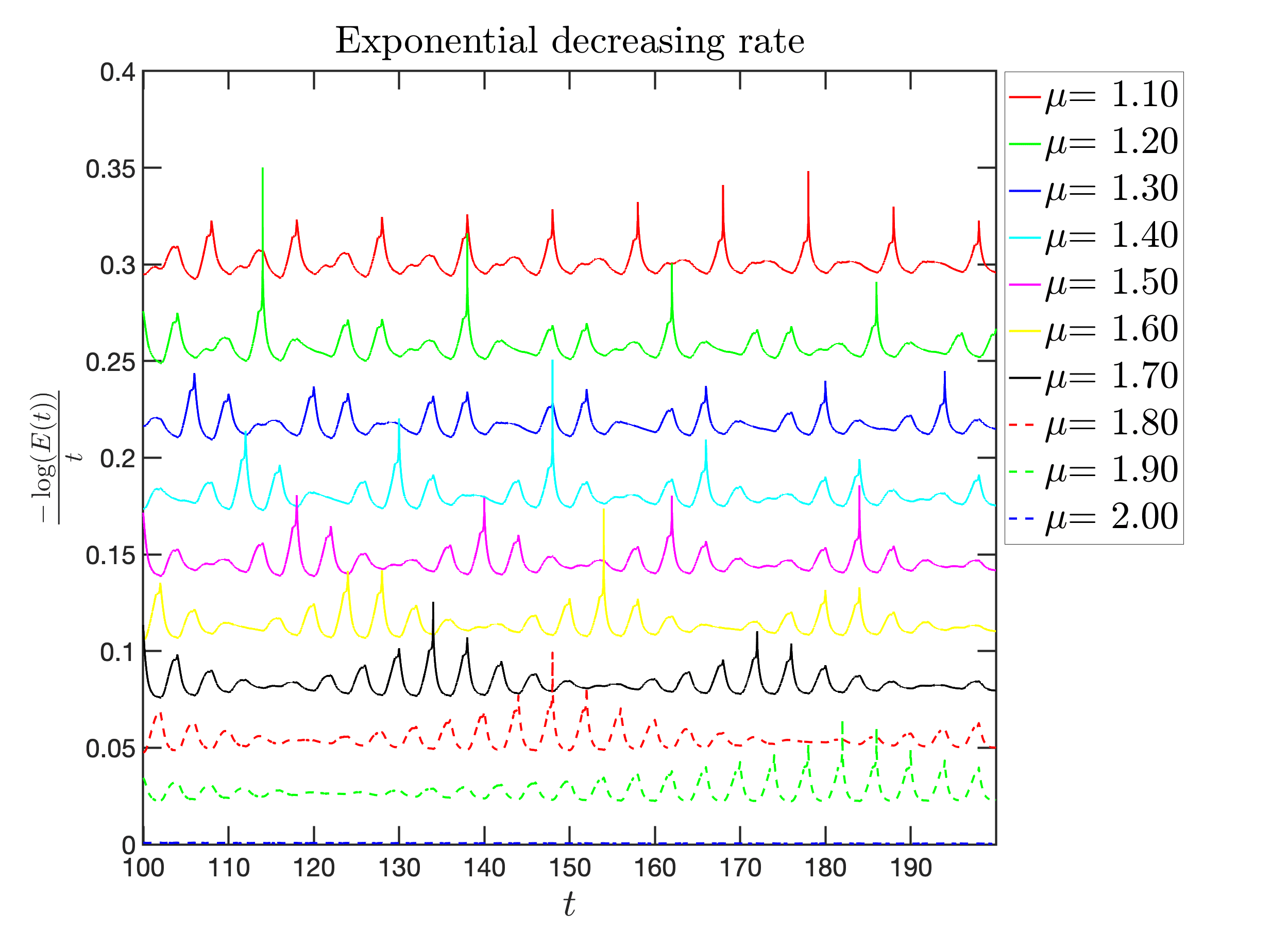}}
	\captionsetup{justification=centering}
	\caption{Pointwise delayed control. Exponential decay rate $0 < \mu \leq 2$}
	\label{Exp-Energy-pointwise}
\end{figure}
\newpage

\begin{figure}[H]
	\centering
	\subcaptionbox{Conservation of the energy for $\mu = 1$. \label{Energy-pointwise-mu-2}}	
	{\includegraphics[width=0.9\textwidth]{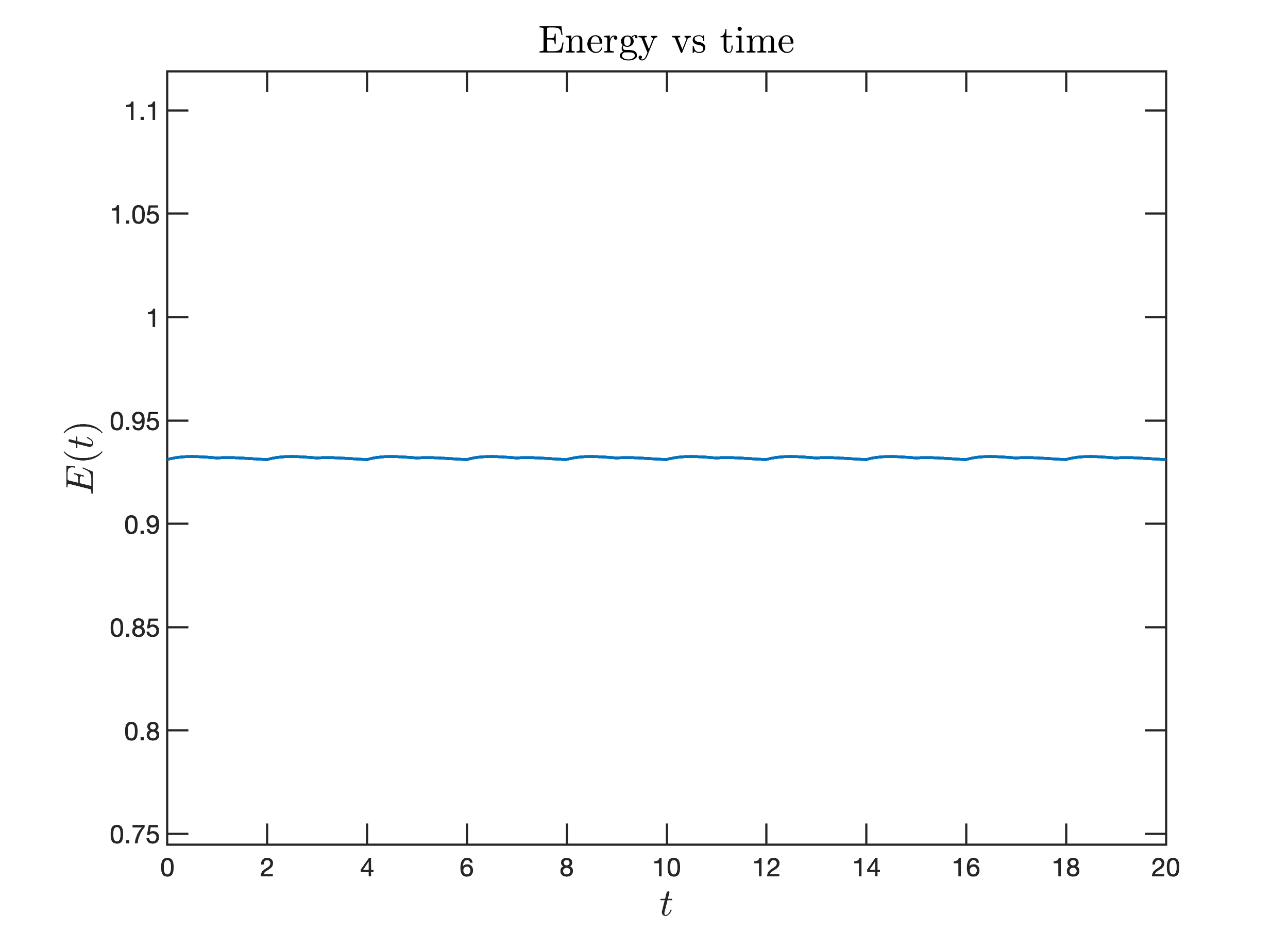}}	
	\subcaptionbox{Initial profile.\label{Profile-init-pointwise-mu-2}}	
	{\includegraphics[width=0.9\textwidth]{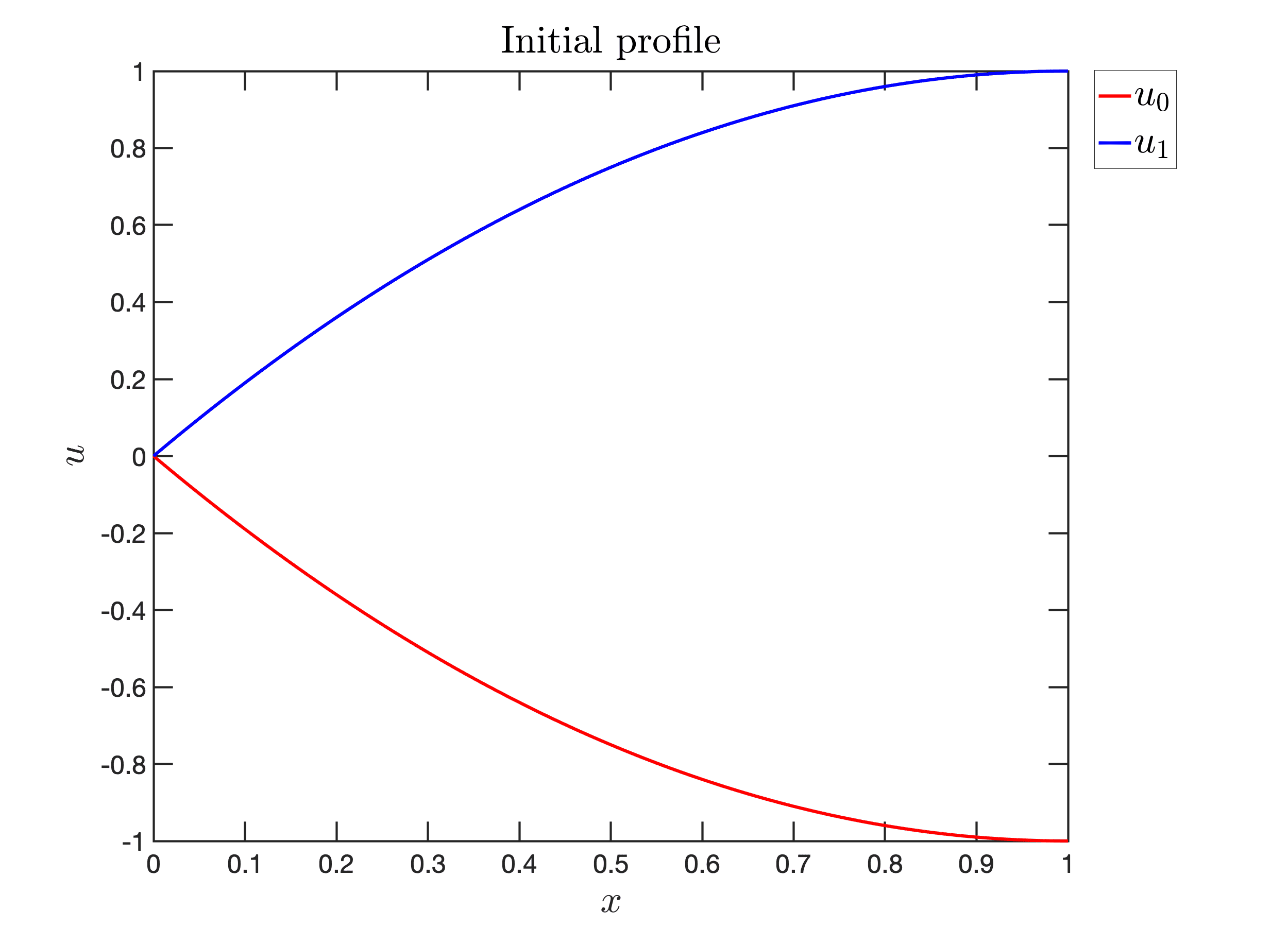}}	
	\captionsetup{justification=centering}
	\caption{Pointwise delayed control. The surprising case $\mu = 2$}
	\label{Energy-pointwise-mu-1-all}
\end{figure}
\newpage

\begin{figure}[H]
	\centering
	\subcaptionbox{Final profile for $T_{f} = 10 \times T = 20$\label{Profile-final-pointwise-1}}
	{\includegraphics[width=0.9\textwidth]{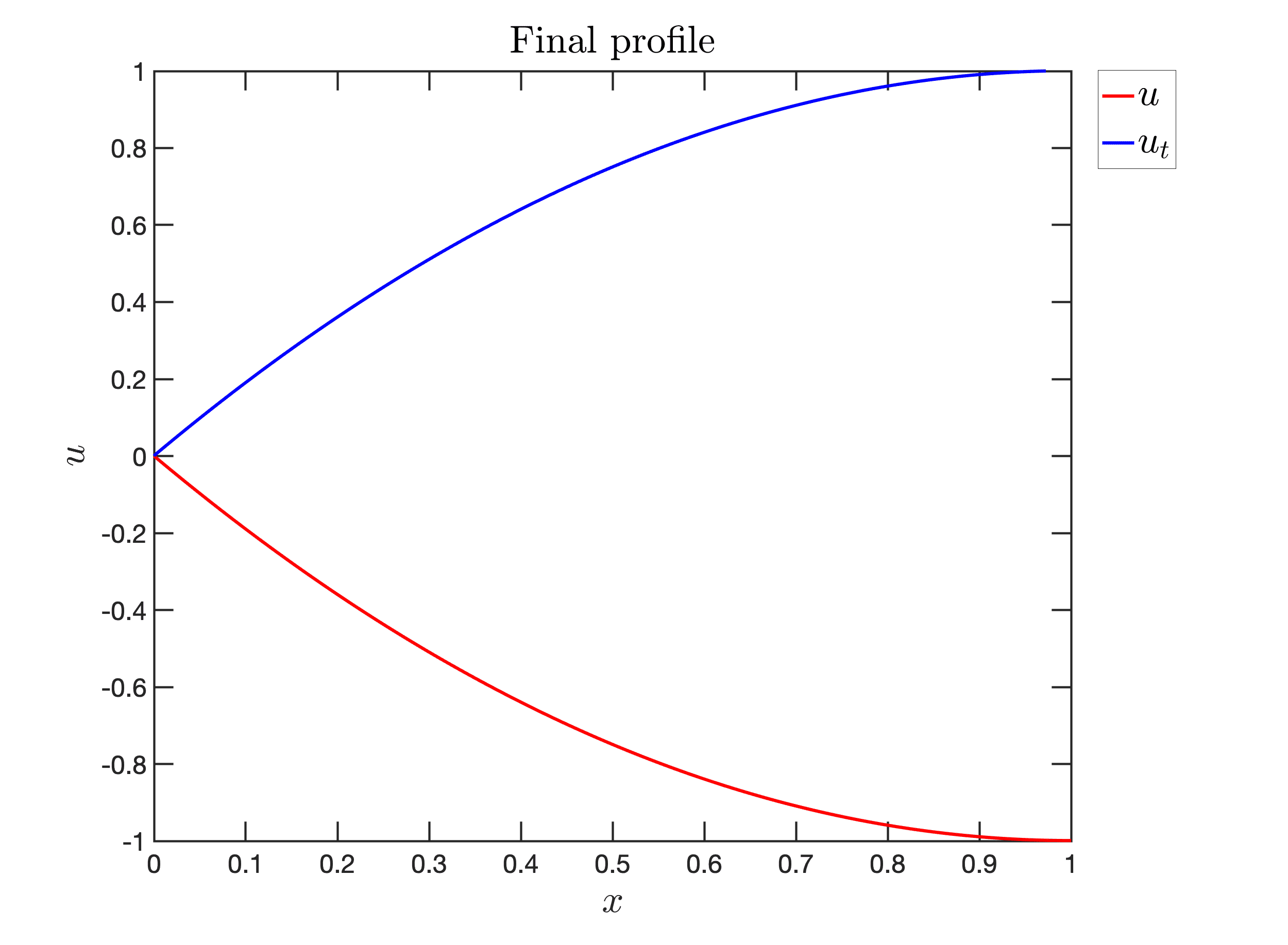}}
	\subcaptionbox{Final profile for $T_{f} = 11 \times T = 22$ .\label{Profile-final-pointwise-2}}
	{\includegraphics[width=0.9\textwidth]{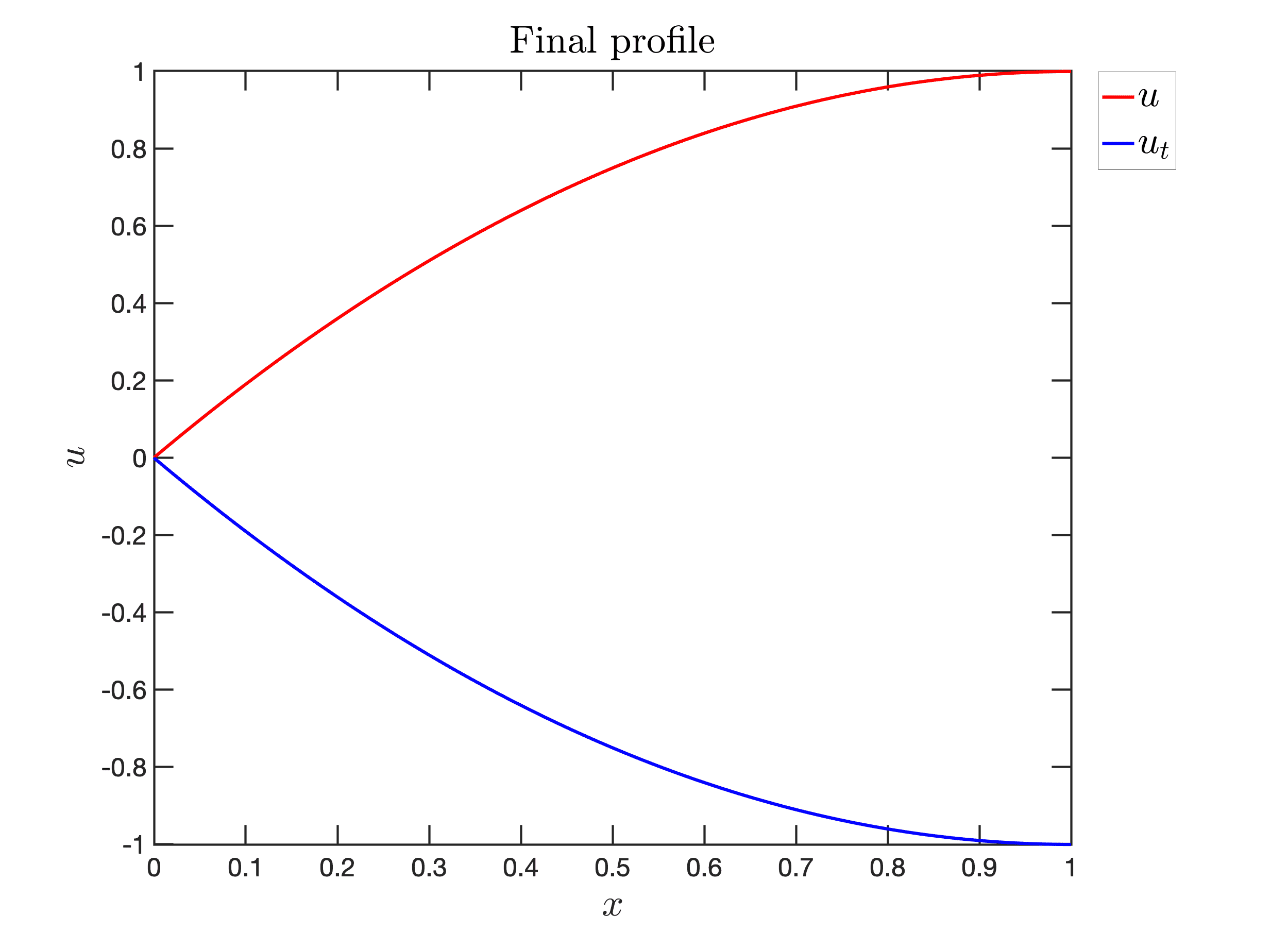}}
	\captionsetup{justification=centering}
	\caption{Pointwise delayed control. Final profiles for $\mu = 2$}
	\label{Final-pointwise}
\end{figure}

\newpage
\begin{figure}[H]
	\centering
	\subcaptionbox{Evolution of the discrete energy for $0 < \mu \leq 1 \,,\, t \in [0,20]$.\label{Energy-pointwise-sup-1}}
	{\includegraphics[width=0.9\textwidth]{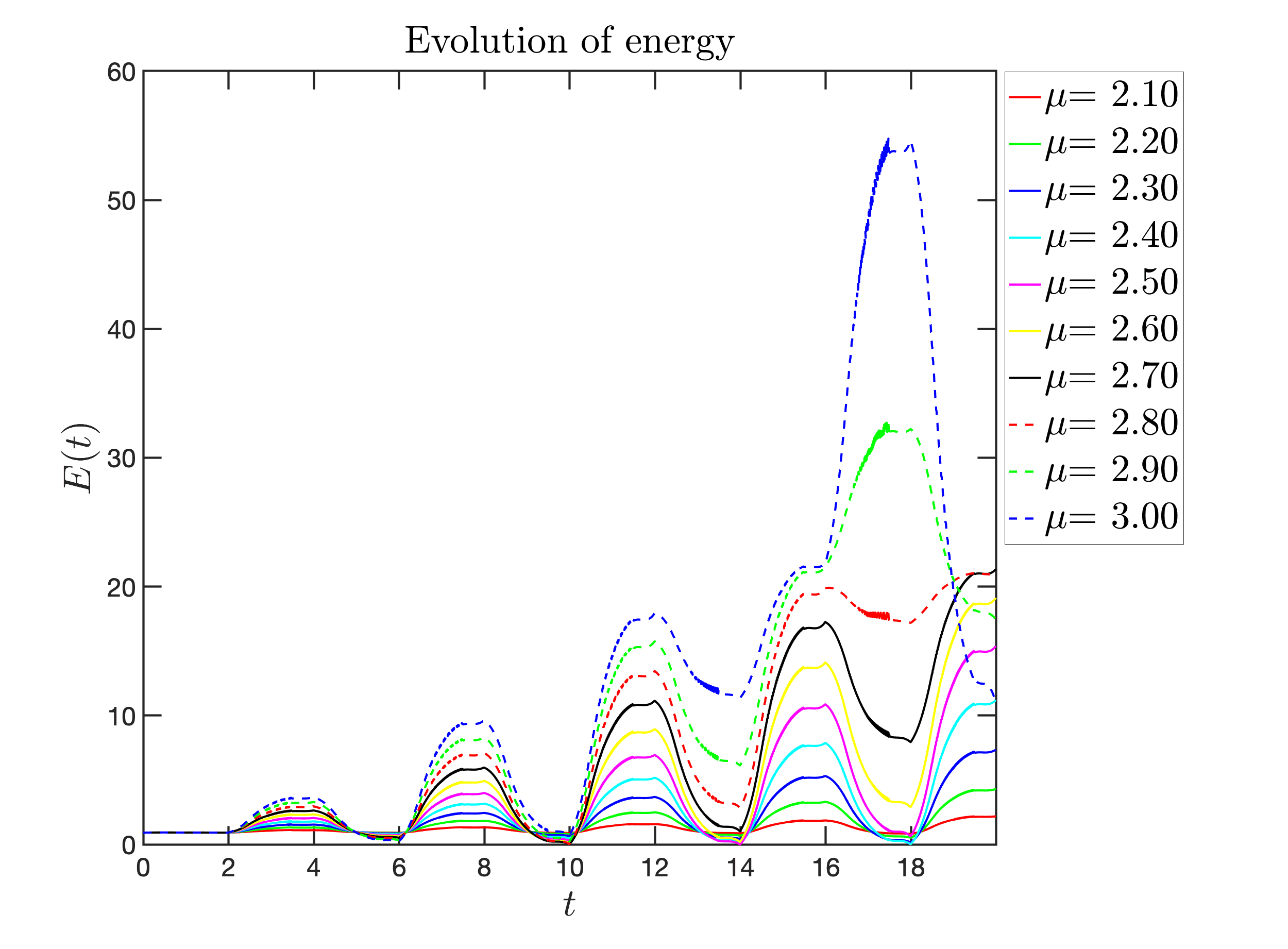}}
	\subcaptionbox{Evolution of the discrete energy for $1< \mu \leq 2 \,,\, t \in [0,20]$.\label{Energy-pointwise-sup-2}}
	{\includegraphics[width=0.9\textwidth]{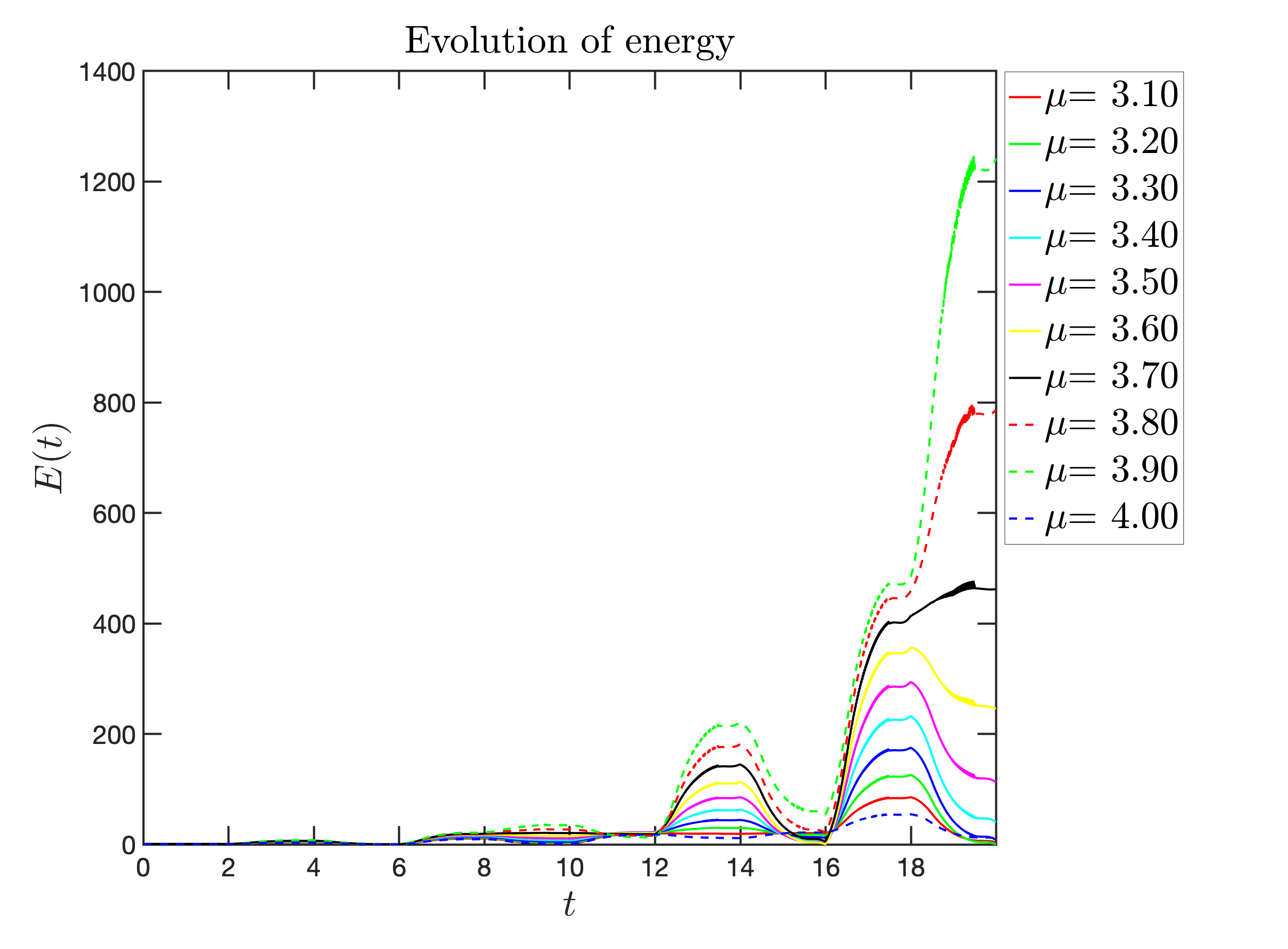}}
	\captionsetup{justification=centering}
	\caption{Pointwise delayed control. Energy when $2 < \mu $}
	\label{Energy-pointwise-sup}
\end{figure}

\newpage

\begin{figure}[H]
	\centering
	\subcaptionbox{Evolution of the discrete energy for $-1 \leq \mu < 0 \,,\, t \in [0,20]$.\label{Energy-pointwise-neg-1}}
	{\includegraphics[width=0.9\textwidth]{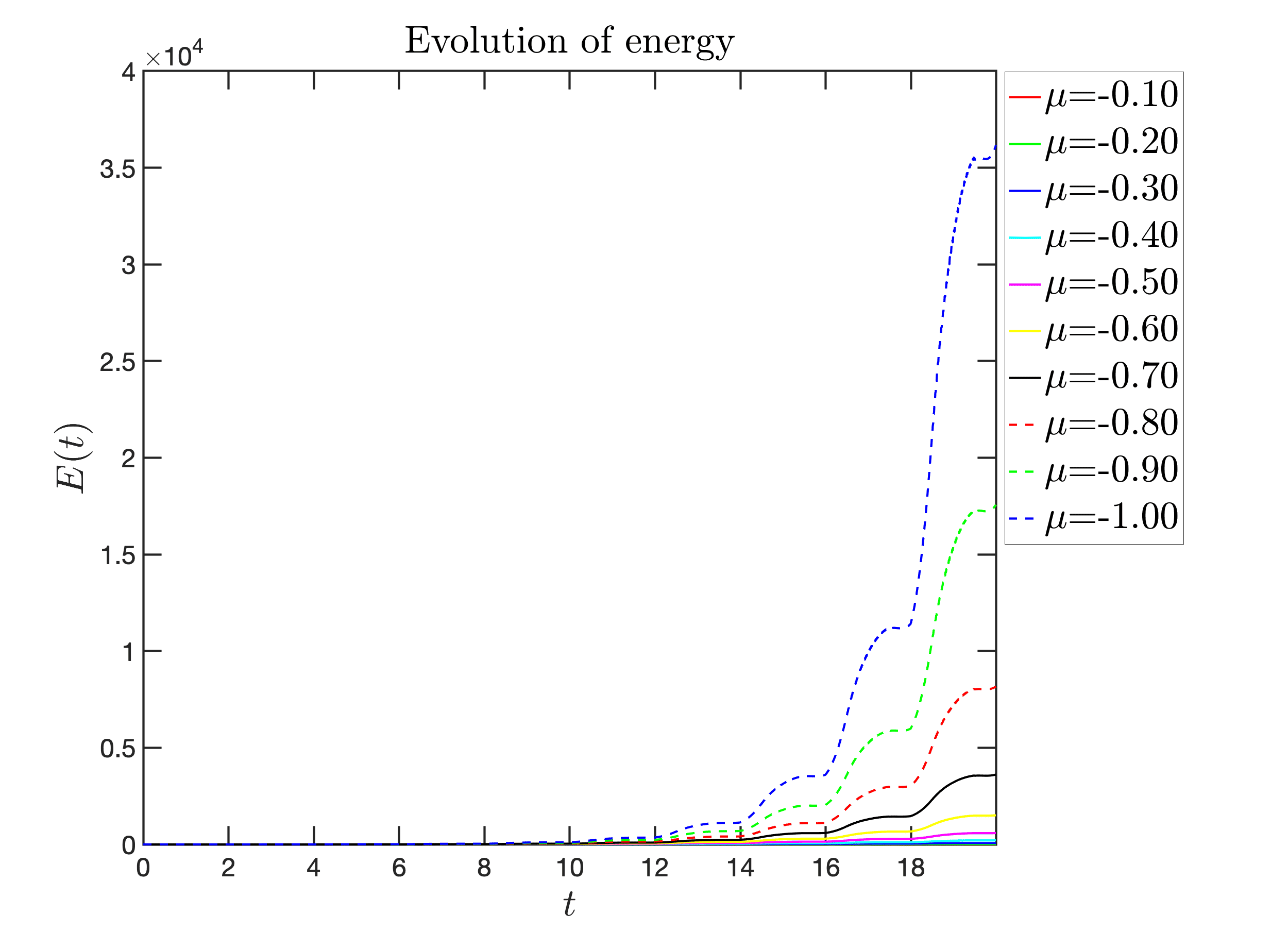}}
	\subcaptionbox{Evolution of the discrete energy for $-2 \leq \mu < -1 \,,\, t \in [0,20]$.\label{Energy-pointwise-neg-2}}
	{\includegraphics[width=0.9\textwidth]{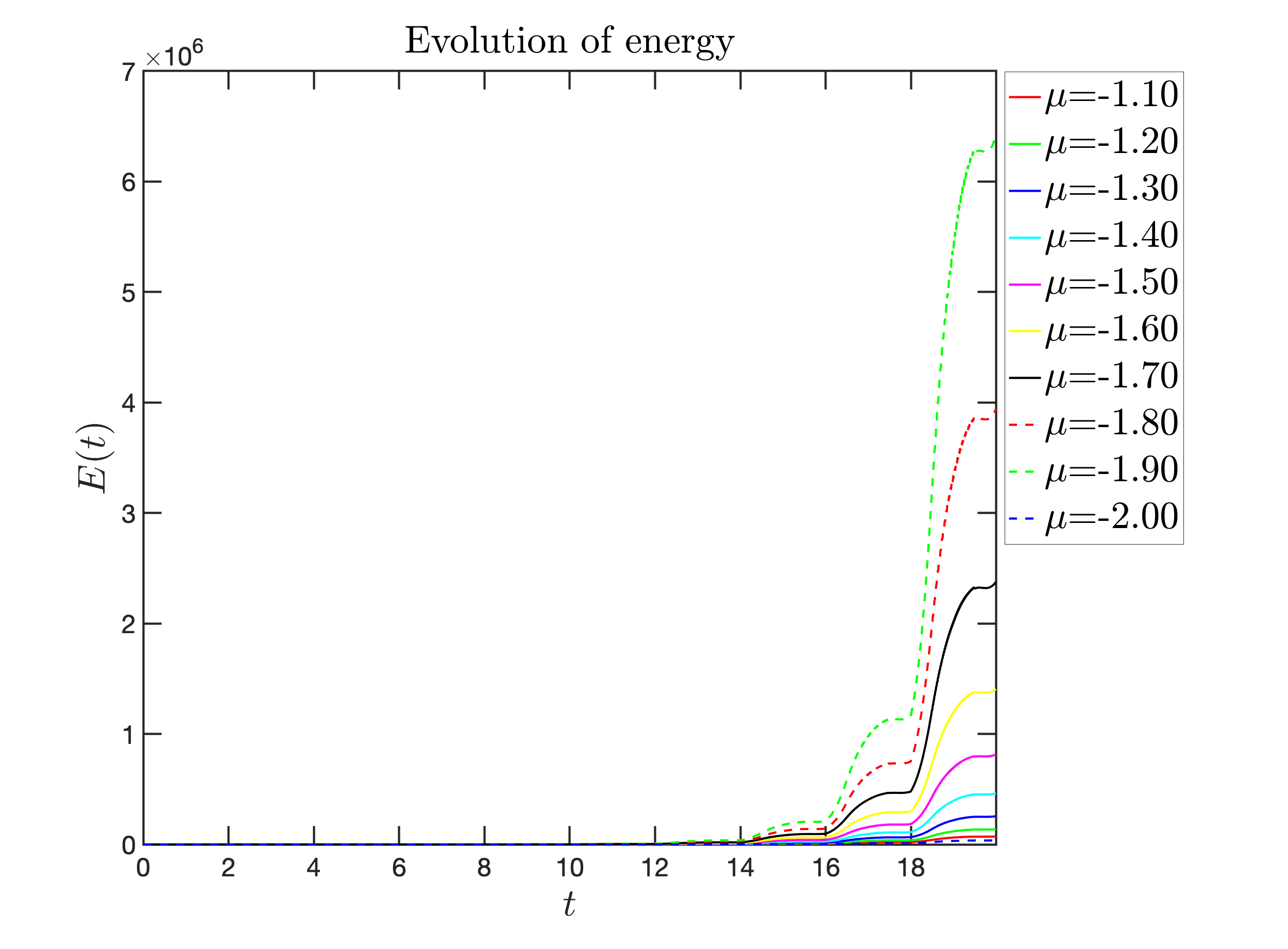}}
	\captionsetup{justification=centering}
	\caption{Pointwise delayed control. Energy when $ -2 \leq  \mu < 0$}
	\label{Energy-pointwise-neg}
\end{figure}

\end{document}